\newtheorem{thm}{Theorem}[section]
\newtheorem{prop}[thm]{Proposition}
\newtheorem{lemma}[thm]{Lemma}
\newtheorem{corollary}[thm]{Corollary}
\newtheorem*{thm*}{Theorem}
\theoremstyle{definition}
\newtheorem{definition}[thm]{Definition}
\theoremstyle{remark}
\newtheorem{example}[thm]{Example}
\numberwithin{equation}{section}
\newcommand{\Hom}{\textup{Hom}}
\newcommand{\Fac}{\textup{Fac}}
\newcommand{\MGS}{\textup{MGS}}
\newcommand{\add}{\textup{add}}
\newcommand{\proj}{\textup{proj}}
\newcommand{\projinj}{\textup{proj-inj}}
\newcommand{\coker}{\textup{coker}}
\newcommand{\im}{\textup{im}}
\newcommand{\pd}{\textup{pd}}
\newcommand{\soc}{\textup{soc}}
\newcommand{\tp}{\textup{top}}
\newcommand{\Ext}{\textup{Ext}}
\newcommand{\End}{\textup{End}}
\newcommand{\stilt}{\textup{s}\tau\textup{-tilt}}
\newcommand{\tilt}{\textup{tilt}}
\renewcommand{\mod}{\textup{mod}}
\begin{document}

\title{Classical tilting and $\tau$-tilting theory via duplicated algebras} 
\author{Jonah Berggren}
\address{Department of Mathematics, University of Kentucky, Lexington, KY 40506-0027, USA}
\email{jrberggren@uky.edu}

\author{Khrystyna Serhiyenko}
\thanks{The authors were
supported by the NSF grant DMS-2451909. This work was supported by a grant from the
Simons Foundation International [SFI-MPS-TSM-00013650, KS]}
\address{Department of Mathematics, University of Kentucky, Lexington, KY 40506-0027, USA}
\email{khrystyna.serhiyenko@uky.edu}


\maketitle
\setcounter{tocdepth}{2}

\begin{abstract} 
$\tau$-tilting theory can be thought of as a generalization of the classical tilting theory which allows mutations at any indecomposable summand of a support $\tau$-tilting pair.  Indeed, for any algebra $\Lambda$ its tilting modules $\tilt\,\Lambda$ form a subposet of the support $\tau$-tilting poset $\stilt\,\Lambda$.  We show that conversely the $\tau$-tilting theory of an algebra $\Lambda$ can be naturally identified with the classical tilting theory of its duplicated algebra $\bar\Lambda$ by establishing a poset isomorphism $\stilt\,\Lambda\cong \tilt\,\bar\Lambda$.  As a result, $\tau$-tilting theory may be considered to be a special case of tilting theory.  This extends the results of Assem-Br\"ustle-Schiffler-Todorov in the case of hereditary algebras.  We also show that the product $\stilt\,\Lambda\times \stilt\,\Lambda$ embeds into the support $\tau$-tilting poset of its duplicated algebra $\stilt\,\bar\Lambda$ as a collection of Bongartz intervals.  As an application we obtain a similar inclusion on the level of maximal green sequences. 
\end{abstract}

\section{Introduction}

Tilting theory is powerful tool for relating different categories and it appears in many areas of mathematics, including representation theory, commutative and non-commutative algebraic geometry, and algebraic topology.  It originated with the work of Bernstein, Gelfand, and Ponomarev on reflection functors \cite{BGP} in 1973, relating representations of two quivers.  Reflection functors also provided important insights into Gabriel's classification of quivers of finite representation type.  They were subsequently generalized in the work of Auslander, Platzeck, and Reiten \cite{APR} and in the Brenner-Butler tilting theorem \cite{BB}.   The present definition of a tilting module first appeared in the work of Happel and Ringel \cite{HR}, while the study of the tilting poset of an algebra and its combinatorics was lead by Unger \cite{U}.   Since then, tilting theory has been vastly generalized from the setting of modules over finite dimensional algebras to general rings, and further extended to the study of co-tilting theory, silting theory, and cluster tilting theory. For more information on tilting theory we refer to \cite{ASS, HHK} and references therein. 

Another important generalization of tilting theory is the $\tau$-tilting theory introduced in 2014 in the seminal work of  Iyama, Adachi, and Reiten \cite{AIR}.   They show that for a given finite dimensional algebra, $\tau$-tilting theory is equivalent to the study of 2-term silting complexes of projective modules as well as functorially finite torsion classes.  Moreover, cluster algebras of Fomin and Zelevinsky \cite{FZ} can be categorified in terms of the $\tau$-tilting theory of certain 2-Calabi-Yau tilted algebras, hence $\tau$-tilting theory uncovers cluster-like combinatorics in the module category of every finite dimensional algebra.   Furthermore, the development of $\tau$-tilting theory motivated many new results in representation theory.  In particular, there are close connections between the $\tau$-tilting theory and the study of bricks \cite{DIRRT}, stability conditions \cite{As}, g-vectors \cite{DIJ}, c-vectors \cite{F}, and green sequences \cite{BST}.  

Given any finite dimensional algebra $\Lambda$, a tilting module naturally yields a support $\tau$-tilting pair, thus $\tau$-tilting theory naturally contains tilting theory of an algebra.  More specifically, tilting modules form a poset $\tilt\,\Lambda$ where two tilting modules are connected by an edge in the Hasse diagram of $\tilt\,\Lambda$ whenever they differ by a single indecomposable summand.   Exchanging one indecomposable summand of a tilting module for another is called a mutation.   Similarly support $\tau$-tilting modules form a poset $\stilt\,\Lambda$, and  $\tilt\,\Lambda$ is a cover--preserving subposet of $\stilt\,\Lambda$.   Hence, $\tau$-tilting theory is often viewed as a completion of the classical tilting theory which allows mutations at every indecomposable summand. 

In this article, we present the opposite point of view by showing that for any finite dimensional algebra $\Lambda$ its $\tau$-tilting theory is equivalent to the tilting theory of its duplicated algebra $\bar\Lambda$.  The duplicated algebra of $\Lambda$ is defined as the $2\times 2$ triangular matrix algebra with $\Lambda$ on the diagonal and the dual bimodule $D\Lambda$ on the subdiagonal.  Our first main result is as follows. 

\begin{thm}\label{thm:intro1}
Let $\Lambda$ be an algebra and let $\bar\Lambda$ denote its duplicated algebra.  Then there is an isomorphism of posets 
\[\stilt\,\Lambda  \cong \tilt\,\bar\Lambda.\]
\end{thm}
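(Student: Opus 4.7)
\medskip

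\noindent\textbf{Proof proposal.}  The plan is to construct an explicit order-preserving bijection $\Phi:\stilt\,\Lambda\to\tilt\,\bar\Lambda$ together with its inverse, in the spirit of the ABST construction for hereditary $\Lambda$ but now using the full $\tau$-tilting machinery of \cite{AIR}.  A module over the duplicated algebra $\bar\Lambda$ decomposes, via the canonical surjection $\pi:\bar\Lambda\to\Lambda$ and the embedding of the off-diagonal copy, into data governed by two ``layers'' of $\Lambda$-modules; in particular, the indecomposable projective $\bar\Lambda$-modules fall into two classes: the diagonal projectives $P_i$ (which are also the projectives of $\Lambda$ viewed through $\pi$) and the off-diagonal projectives $\bar P_i$, which are projective-injective.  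The count $|\bar\Lambda|=2|\Lambda|$ matches the fact that a support $\tau$-tilting pair $(M,P)$ over $\Lambda$ has $|M|+|P|=|\Lambda|$, so that $\Phi$ must distribute these $|\Lambda|$ summands across both layers.

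The first step is to define $\Phi(M,P)=\widetilde M\oplus \bar P$, where $\widetilde M$ is built from a minimal projective presentation $Q_1\xrightarrow{f} Q_0\to M\to 0$ over $\Lambda$ by lifting $f$ to a map between the corresponding diagonal projectives of $\bar\Lambda$ and taking its cokernel, and where $\bar P$ is the projective-injective $\bar\Lambda$-module indexed by the same idempotent that cuts out $P$ from $\Lambda$.  One then verifies $\Phi(M,P)\in\tilt\,\bar\Lambda$ by checking the three defining conditions: (a) $\pd_{\bar\Lambda}\widetilde M\le 1$ from the very construction, while $\bar P$ is projective; (b) $|\Phi(M,P)|=|\bar\Lambda|$ from the rank count above; (c) $\Ext^1_{\bar\Lambda}(\Phi(M,P),\Phi(M,P))=0$, which is the technical heart of the argument.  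The $\Ext^1$-vanishing should be established by translating it, via the Auslander--Reiten formula, into the two conditions defining the support $\tau$-tilting pair $(M,P)$: the $\tau$-rigidity of $M$ (which via $\tau_{\bar\Lambda}$ versus $\tau_\Lambda$ controls the $\Hom$-spaces into the shifted layer) and the vanishing $\Hom_\Lambda(P,M)=0$ (which controls the remaining cross-terms involving $\bar P$).

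The inverse $\Psi$ is defined by splitting any $T\in\tilt\,\bar\Lambda$ as $T=T_1\oplus T_2$, where $T_2$ is the sum of the projective-injective summands (determining the projective part $P$ of the pair via the idempotents they index) and $T_1$ descends along $\pi$ to a $\tau$-rigid $\Lambda$-module $M$; here one must show that every indecomposable summand of a tilting $\bar\Lambda$-module is either projective-injective or lies in $\pi^*(\mod\Lambda)$, an essential structural fact about $\mod\bar\Lambda$.  That $\Phi$ and $\Psi$ are mutually inverse then reduces to matching the projective presentations.  For monotonicity, both posets are connected by Hasse covers (mutation), so it suffices to check that $\Phi$ sends a $\tau$-tilting mutation of $(M,P)$ at an indecomposable summand to the classical tilting mutation of $\Phi(M,P)$ at the corresponding summand; by \cite{AIR} mutations are unique, so matching the replaced summand on both sides is enough.

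The main obstacle will be the $\Ext^1$-computation in step (c) together with the structural claim underlying $\Psi$.  Both require a precise dictionary between $\tau_\Lambda$ and $\tau_{\bar\Lambda}$: one must show that the extra $\Ext$-groups introduced by passage to $\bar\Lambda$ vanish exactly when the pair $(M,P)$ is support $\tau$-tilting, and conversely that a tilting $\bar\Lambda$-module cannot have an indecomposable non-projective summand supported in both layers simultaneously.  Once this dictionary is in place, well-definedness of $\Phi$ and $\Psi$ and the monotone bijection property should follow from a clean combinatorial comparison of mutations on each side.
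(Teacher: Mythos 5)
There are genuine gaps, and they occur at exactly the points you flag as ``the technical heart.'' First, your definition of $\widetilde M$ does not do what you claim: the diagonal projectives of $\bar\Lambda$ \emph{are} the projectives of $\Lambda$ (Lemma~\ref{lem:proj}(a)), so lifting $f\colon Q_1\to Q_0$ to the diagonal layer and taking the cokernel just returns $M$ itself, and $\pd_{\bar\Lambda}M=\pd_\Lambda M$ can be arbitrary, so property (a) fails ``from the very construction.'' The essential trick in the paper's map $F$ is missing: one must enlarge the presentation by an injective envelope $g\colon Q_1\to P_{\bar\Lambda}'$ (projective-injective over $\bar\Lambda$) so that $\begin{bsmallmatrix}f\\ g\end{bsmallmatrix}$ becomes injective, take the cokernel, and strip its projective-injective summands; only then is $\pd_{\bar\Lambda}F(M)\le 1$ (Lemma~\ref{lem:33}). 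Second, your treatment of the projective part $P$ cannot work: sending $P$ to the projective-injective indexed by the same idempotent either gives only $|M|+|P|=|\Lambda|$ summands (not $2|\Lambda|$), or, if you add the remaining projective-injectives to fix the count, destroys injectivity, because \emph{every} tilting $\bar\Lambda$-module already contains all projective-injectives, so the information of $P$ is lost. The paper instead sends $P_\Lambda$ to its cosyzygy $\Omega^{-1}_{\bar\Lambda}P_\Lambda$, which is not projective-injective and genuinely records $P$, and adds the full sum $Q_{\bar\Lambda}$ of projective-injectives (Definition~\ref{def:f}); the equivalence of $\Hom_\Lambda(P,M)=0$ with the relevant $\Ext^1$-vanishing is then Lemmas~\ref{lem:rigid4} and~\ref{lem:pm}.

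Third, the structural claim on which your inverse $\Psi$ rests --- that a tilting $\bar\Lambda$-module has no non-projective-injective indecomposable summand supported in both layers --- is false. In Example~\ref{ex:3}, $F(S(1))={\begin{smallmatrix}3^\bullet\,1\\ 2\end{smallmatrix}}$ is supported at $3^\bullet$, $1$, $2$ simultaneously and occurs as a summand of tilting $\bar\Lambda$-modules; likewise the cosyzygies $\Omega^{-1}_{\bar\Lambda}P_\Lambda$ straddle both layers. The correct description of the non-projective-injective part is that it lies in $\mathcal{H}(\bar\Lambda)$ and splits as $F(M)\oplus\Omega^{-1}_{\bar\Lambda}P_\Lambda$, which is what the paper proves via the surjectivity of $F$ onto $\mathcal{H}(\bar\Lambda)$ (Lemma~\ref{lem:surj}) and the rigidity dictionary (Theorem~\ref{thm:rigid_bij}). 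Finally, your monotonicity argument by matching mutations is incomplete: for $\tau$-tilting infinite algebras the partial order on $\stilt\,\Lambda$ need not be the transitive closure of its cover relation, so agreeing on Hasse edges does not yield a poset isomorphism. The paper avoids this by identifying $\tilt\,\bar\Lambda$ with the Bongartz interval $\stilt_{Q_{\bar\Lambda}}\bar\Lambda$ and showing that $\bar F$ coincides with the isomorphism $\bar F_\Lambda$ furnished by Jasso's $\tau$-tilting reduction (Theorem~\ref{prop:fg}, Corollary~\ref{cor:all}), which handles order-preservation in full generality.
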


This isomorphism is achieved by an explicit map $\bar F$ that maps projective presentations of support $\tau$-tilting $\Lambda$-modules to projective resolutions of tilting $\bar\Lambda$-modules.  We note that for hereditary algebras $\Lambda$, this result can be derived from the 2006 work of Assem, Br\"ustle, Schiffler, and Todorov \cite{ABST}, who show that tilting modules over $\bar\Lambda$ are in bijection with cluster tilting objects in the cluster category $\mathcal{C}_{\Lambda}$ of $\Lambda$ from \cite{BMRRT}.  Note that the latter are in bijection with support $\tau$-tilting $\Lambda$-modules by \cite{AIR}.   Hence, Theorem~\ref{thm:intro1} generalizes the results of \cite{ABST} beyond the hereditary case.  We also note that the relation between tilting modules of an algebra and its duplicated algebra, or more generally $m$-replicated algebras, has been studied in a number of works by Zhang \cite{Z, Z2}, and in relation to higher Auslander-Reiten theory by Chan, Iyama, and  Marczinzik \cite{CIM}.  In another direction, recently Hafezi, Nasr-Isfahani, and Wei establish a bijection between $\stilt\,\Lambda$ and tilting objects in the category of morphisms of projective $\Lambda$-modules \cite{HNIW}.  

Next, we further relate support $\tau$-tilting posets of $\Lambda$ and $\bar\Lambda$.  In particular, we show that the product of support $\tau$-tilting posets of $\Lambda$ is a subposet of the support $\tau$-tilting poset of $\bar\Lambda$.

\begin{thm}\label{thm:intro2}
Let $\Lambda$ be an algebra and let $\bar\Lambda$ denote its duplicated algebra.   Then there is an inclusion of posets 
\[\varphi: \stilt\,\Lambda \times \stilt\,\Lambda  \to \stilt\,\bar\Lambda. \]
\end{thm}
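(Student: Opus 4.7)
The plan is to define $\varphi$ by combining Theorem~\ref{thm:intro1}, which identifies the first coordinate with a tilting $\bar\Lambda$-module, with a Bongartz-interval construction in $\stilt\,\bar\Lambda$ parametrized by the second coordinate.

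Given $(M_1, M_2) \in \stilt\,\Lambda \times \stilt\,\Lambda$, let $T_1 = \bar F(M_1) \in \tilt\,\bar\Lambda$ be the tilting $\bar\Lambda$-module produced by Theorem~\ref{thm:intro1}. The explicit construction of $\bar F$ via projective presentations should single out a $\tau$-rigid direct summand $X_1$ of $T_1$ that is preserved as $M_2$ is allowed to vary, while the complement of $X_1$ inside $T_1$ is built from the projective presentation of $M_1$. I would then define $\varphi(M_1, M_2)$ to be the support $\tau$-tilting $\bar\Lambda$-module obtained by pairing $X_1$ with additional summands constructed from the projective presentation of $M_2$ in precisely the manner that $\bar F$ builds the complement of $X_1$ in $T_1$ from $M_1$, so that in particular $\varphi(M_1, M_1) = T_1$.

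The proof that $\varphi$ is a poset inclusion would proceed in three steps. First, verify that each $\varphi(M_1, M_2)$ is a genuine support $\tau$-tilting $\bar\Lambda$-module by assembling the projective presentations of $M_1$ and $M_2$ into a $2$-term presilting complex of $\bar\Lambda$-projectives and invoking \cite{AIR}; the $\Hom$-vanishing conditions required for silting should reduce to the $\tau$-rigidity of $M_1$ and $M_2$ separately over $\Lambda$. Second, show that for each fixed $M_1$ the assignment $M_2 \mapsto \varphi(M_1, M_2)$ is a poset isomorphism from $\stilt\,\Lambda$ onto the Bongartz interval in $\stilt\,\bar\Lambda$ whose top is $T_1$ (the Bongartz completion of $X_1$) and whose bottom is the minimal support $\tau$-tilting $\bar\Lambda$-module having $X_1$ as a summand; the key input is an Auslander-Reiten translate computation in $\bar\Lambda$ reducing $\tau_{\bar\Lambda}$-rigidity on the variable block to $\tau_\Lambda$-rigidity of $M_2$. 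Third, verify that the Bongartz intervals indexed by distinct $M_1 \in \stilt\,\Lambda$ are pairwise disjoint in $\stilt\,\bar\Lambda$, making $\varphi$ well-defined and injective.

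The main obstacle will be order preservation and reflection \emph{across} different Bongartz intervals. Given $M_1 \le M_1'$ and a common $M_2$, one must produce the inequality $\varphi(M_1, M_2) \le \varphi(M_1', M_2)$ in $\stilt\,\bar\Lambda$. I expect this to follow from (i) the fact that Theorem~\ref{thm:intro1} already gives $T_1 \le T_1'$ and that $\tilt\,\bar\Lambda$ is a cover-preserving subposet of $\stilt\,\bar\Lambda$; (ii) an analogous inequality for the bottoms of the intervals, proved by directly comparing the projective presentations used in the construction of $\bar F$; and (iii) order preservation of Bongartz completion applied coordinatewise. Order reflection should then follow because $\varphi(M_1, M_2)$ is the unique element of its Bongartz interval that contains the ``lower'' block built from the projective presentation of $M_2$.
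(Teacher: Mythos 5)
There is a genuine gap, and it lies in the very definition of $\varphi$. You build both blocks of $\varphi(M_1,M_2)$ from projective presentations over the \emph{same} copy of $\Lambda$ inside $\bar\Lambda$, via the construction of $\bar F$. Read literally, your fixed summand $X_1$ (the complement in $T_1=\bar F(M_1)$ of the part built from $M_1$) is just the projective-injective module $Q_{\bar\Lambda}$, which does not depend on $M_1$; your recipe then gives $\varphi(M_1,M_2)=F(M_2)\oplus\Omega^{-1}_{\bar\Lambda}P_2\oplus Q_{\bar\Lambda}=\bar F(M_2)$, so the map forgets the first coordinate entirely and cannot be injective, and the ``disjointness of intervals indexed by $M_1$'' step collapses. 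If instead you intend $X_1$ to be the $M_1$-block $F(M_1)\oplus\Omega^{-1}_{\bar\Lambda}P_1$ and you adjoin an $M_2$-block built the same way, the result is generally not $\tau$-rigid: by Theorem~\ref{thm:rigid_bij}, rigidity of $F(M_1)\oplus F(M_2)$ is equivalent to $\tau_\Lambda$-rigidity of $M_1\oplus M_2$, which fails for two arbitrary support $\tau$-tilting $\Lambda$-modules (and the summand count would also overshoot $|\bar\Lambda|$). This is exactly why your claim in step one --- that the cross $\Hom$-vanishing conditions ``reduce to the $\tau$-rigidity of $M_1$ and $M_2$ separately'' --- is false when both blocks are presented over the same copy of $\Lambda$: the cross terms are governed by $\Hom_\Lambda(M_1,\tau_\Lambda M_2)$ and $\Hom_\Lambda(M_2,\tau_\Lambda M_1)$, which do not vanish in general.

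The missing idea is that the first coordinate must be transported into the \emph{other} copy $\Lambda^\bullet$ of $\Lambda$. The paper defines a map $G$ (Definition~\ref{def:G}) that rebuilds the presentation of $M_1$ over the projectives $P_{\bar\Lambda}(i^\bullet)$; then $\tau_{\bar\Lambda}G(M_1)\cong(\tau_\Lambda M_1)^\bullet$, so $G$ preserves $\tau$-rigidity (Proposition~\ref{prop:g}), all cross conditions against modules supported on the original copy vanish for support reasons, and $(G(M_1)\oplus\Lambda,G(P_1))$ is the Bongartz completion of $G(M_1,P_1)$ (Theorems~\ref{thm:g}, \ref{cor:g}). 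The second coordinate is then inserted via Jasso's $\tau$-tilting reduction (Theorem~\ref{thm:red}): since the top of $G(M_1)$ lies at the dotted vertices, the reduced algebra is $\End_\Lambda(\Lambda)\cong\Lambda$, so the Bongartz interval of $G(M_1,P_1)$ is isomorphic to $\stilt\,\Lambda$ (Proposition~\ref{prop:FM}), and $\varphi$ sends $(M_2,P_2)$ to the corresponding point of that interval; in particular the top of the interval is $(G(M_1)\oplus\Lambda,G(P_1))$, not $\bar F(M_1)$, and your map $\bar F$ only reappears as the special case $M_1=(\Lambda,0)$ (Theorem~\ref{prop:fg}). Disjointness of the intervals then follows because a common element would make $G(M_1\oplus M_1',P_1\oplus P_1')$, hence $(M_1\oplus M_1',P_1\oplus P_1')$, $\tau$-rigid, forcing $M_1\cong M_1'$. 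Finally, order preservation across different intervals is not obtained by comparing tops and bottoms and ``Bongartz completion coordinatewise'' as you suggest; the paper argues directly with torsion classes, using the canonical sequence $0\to tN_2\to N_2\to M_2\to 0$ with $tN_2\in\Fac\,G(M_1)$, the inclusion $G(M_1)\in\Fac\,G(M_3)$, and closure of $\Fac(G(M_3)\oplus N_4)$ under extensions. Without the $G$-construction none of these steps is available, so the proposal as it stands does not yield the theorem.
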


The definition of the map $\varphi$ uses the map $\bar F$ from Theorem~\ref{thm:intro1} and has the following property.  Fixing a support $\tau$-tilting $\Lambda$ module $M$ in the first component of the map $\varphi$ while letting the second component vary over $\stilt\,\Lambda$,  the image of $\varphi (M, -)$ inside $\stilt\,\bar\Lambda$ is a Bongartz interval that is isomorphic to $\stilt\,\Lambda$.   Hence, $\stilt\,\bar\Lambda$ contains many copies of $\stilt\,\Lambda$, which are all Bongartz intervals.   The key ingredient in proving this theorem is the $\tau$-tilting reduction of Jasso \cite{Jasso}.

A maximal green sequence for an algebra $\Lambda$ is a finite maximal chain in the poset $\stilt\,\Lambda$. 
Maximal green sequences were first introduced by Keller in \cite{K} in the context of cluster algebras to compute refined Donaldson-Thomas invariants.  They are also important in constructing twist automorphisms and proving existence of theta bases in cluster algebras. The notion of maximal green sequences was generalized to module categories of finite dimensional algebras, for example see the survey \cite{KD} and references therein. Moreover,  one of the main questions in this topic is whether an algebra admits a maximal green sequence.  

Let $\MGS(\Lambda)$ denote the set of all maximal green sequence of $\Lambda$.  As an application of Theorem~\ref{thm:intro2} we conclude that a pair of maximal green sequences for an algebra $\Lambda$ induces a maximal green sequence for $\bar\Lambda$ that lies inside the image of $\varphi$.   This result implies that the image of the map $\varphi$ spans the entire length of the support $\tau$-tilting poset of $\bar\Lambda$.

\begin{corollary}\label{cor:intro}
There is an inclusion of maximal green sequences 
\[\psi: \,\,\,   \MGS(\Lambda) \times \MGS(\Lambda)  \to \MGS(\bar\Lambda).\]
\end{corollary}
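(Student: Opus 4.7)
The plan is to concatenate a pair of maximal green sequences of $\Lambda$ into a maximal chain of the product poset $\stilt\,\Lambda\times\stilt\,\Lambda$ and push the result forward through the embedding $\varphi$ of Theorem~\ref{thm:intro2}.  Given $(\mathcal{G}_1,\mathcal{G}_2)\in\MGS(\Lambda)\times\MGS(\Lambda)$ written as
\[
\mathcal{G}_1\colon\Lambda=T_0>T_1>\cdots>T_m=0,\qquad \mathcal{G}_2\colon\Lambda=S_0>S_1>\cdots>S_n=0,
\]
I would form the concatenated chain
\[
C\colon (\Lambda,\Lambda)>(\Lambda,S_1)>\cdots>(\Lambda,0)>(T_1,0)>\cdots>(0,0)
\]
in $\stilt\,\Lambda\times\stilt\,\Lambda$.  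Each consecutive pair of $C$ changes exactly one coordinate by a Hasse-cover in $\stilt\,\Lambda$, so $C$ is a maximal chain in the product poset from $(\Lambda,\Lambda)$ to $(0,0)$.  I would then set $\psi(\mathcal{G}_1,\mathcal{G}_2):=\varphi(C)$.

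The core of the proof is to verify that $\varphi(C)$ is itself a maximal chain in $\stilt\,\bar\Lambda$ from $\bar\Lambda$ to $0$, that is, a maximal green sequence.  For the initial $n$ steps of $\varphi(C)$, which stay inside the Bongartz interval $\varphi(\Lambda,-)\cong\stilt\,\Lambda$ of Theorem~\ref{thm:intro2}, the covers of $\mathcal{G}_2$ transfer directly to covers in $\stilt\,\bar\Lambda$ because Bongartz intervals are Hasse-subposets.  For the remaining $m$ steps, which move between Bongartz intervals $\varphi(T_i,-)$, I would either establish the analogous Bongartz-interval statement for the vertical fibers $\varphi(-,0)$, or inspect the explicit form of $\varphi$ built from the bijection $\bar F$ of Theorem~\ref{thm:intro1} to check that each $\varphi(T_i,0)>\varphi(T_{i+1},0)$ is a cover in $\stilt\,\bar\Lambda$.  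The boundary identifications $\varphi(\Lambda,\Lambda)=\bar\Lambda$ and $\varphi(0,0)=0$ also need to be read off the explicit construction, and are consistent with the remark in the introduction that the image of $\varphi$ spans the entire length of $\stilt\,\bar\Lambda$.

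Injectivity of $\psi$ is immediate: since $\varphi$ is injective, the chain $\varphi(C)$ determines $C$, and the two halves of $C$ recover $\mathcal{G}_2$ and $\mathcal{G}_1$ respectively by coordinate projection.  The main obstacle is cover-preservation along the vertical portion of $C$, which is not directly supplied by Theorem~\ref{thm:intro2} and is where I expect the explicit combinatorics of $\bar F$ to enter; everything else in the argument is either structural (product posets, maximal chains) or an immediate consequence of the properties of $\varphi$ already laid out in Theorem~\ref{thm:intro2}.
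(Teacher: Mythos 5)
Your overall strategy (concatenate the two green sequences into a staircase in $\stilt\,\Lambda\times\stilt\,\Lambda$ and push it through an order embedding into $\stilt\,\bar\Lambda$) is the same as the paper's, but you chose the wrong staircase, and the step you yourself flag as ``the main obstacle'' is not merely unproven --- it is false. On your vertical leg, $\varphi\big((T_i,P_i),(0,\Lambda)\big)=\bar F_{T_i}(0,\Lambda)$ is the Bongartz \emph{co-completion} of $G(T_i,P_i)$, i.e.\ the minimal element of the Bongartz interval $\stilt_{G(T_i,P_i)}\bar\Lambda$, and consecutive co-completions are comparable but in general not in a cover relation. Already for $\Lambda=k(1\to 2)$ (Example~\ref{ex:2}, Figure~\ref{fig:big}) take the cover $(\Lambda,0)\gtrdot(P_\Lambda(1)\oplus S_\Lambda(1),0)$ in $\stilt\,\Lambda$: the corresponding co-completions are the bottom vertices of the top and upper-right pentagons in Figure~\ref{fig:big}, namely $(\Omega^{-1}_{\bar\Lambda}\Lambda\oplus Q_{\bar\Lambda},0)$ and the pair with module part $P_{\bar\Lambda}(1^\bullet)\oplus S_{\bar\Lambda}(1^\bullet)\oplus P_{\bar\Lambda}(1^\bullet)/S_{\bar\Lambda}(1)$; they share only the summand $P_{\bar\Lambda}(1^\bullet)$, so they differ in three indecomposable summands and cannot be a Hasse cover (covers in $\stilt\,\bar\Lambda$ are single mutations); in the figure there are two intermediate elements between them. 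In this example every maximal green sequence of $\Lambda$ hits such a non-cover on your second leg, so the construction cannot be repaired by the ``explicit combinatorics of $\bar F$'': the image of your chain is a chain, but never saturated. (Your first leg and the boundary identifications $\varphi\big((\Lambda,0),(\Lambda,0)\big)=(\bar\Lambda,0)$, $\varphi\big((0,\Lambda),(0,\Lambda)\big)=(0,\bar\Lambda)$ are fine.)

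The paper uses the transposed staircase, and that is exactly what makes every step a cover: take
\[(\Lambda,\Lambda)>(T_1,\Lambda)>\cdots>(0,\Lambda)>(0,S_1)>\cdots>(0,0),\]
i.e.\ first descend in the \emph{first} coordinate while the second stays at its maximum, then descend in the second coordinate with the first at $0$. Under $\varphi$ the first leg is $\varphi\big((T_i,P_i),(\Lambda,0)\big)=\bar G(T_i,P_i)$, the Bongartz \emph{completions}, and these all lie in the single interval $\stilt_{\Lambda}\bar\Lambda=[(\Lambda,Q_{\bar\Lambda}),(\bar\Lambda,0)]$ on which $\bar G$ is a poset isomorphism (Theorems~\ref{thm:g} and~\ref{cor:g}); the second leg is $\bar F_0(\mathcal{G}_2)$, a maximal chain in the Bongartz interval of $(0,Q_{\bar\Lambda})$ running from $(\Lambda,Q_{\bar\Lambda})$ to $(0,\bar\Lambda)$ (Proposition~\ref{prop:FM}). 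Since intervals are convex, a cover inside an interval is a cover in $\stilt\,\bar\Lambda$, so the concatenation $\bar F_0(\mathcal{G}_2)\circ\bar G(\mathcal{G}_1)$ is a saturated chain from $(\bar\Lambda,0)$ to $(0,\bar\Lambda)$, hence a maximal green sequence; injectivity then follows as you argued, from injectivity of $\bar G$ and $\bar F_0$. The asymmetry to internalize is that the Bongartz completions of the $G(T_i,P_i)$ sweep out a single Bongartz interval of $\stilt\,\bar\Lambda$, whereas their Bongartz co-completions do not.
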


The article is organized as follows.  In Section 2 we recall the relevant background on tilting and $\tau$-tilting theory as well as duplicated algebras.  In Section 3 we define the map $F: \mod\,\Lambda \to \mod\,\bar\Lambda$ and describe its image.  We then use $F$ to construct a bijection between $\stilt\,\Lambda$ and $\tilt\,\bar\Lambda$ in Section 4.   In the next section, we define another map $\bar G: \stilt\,\Lambda\to \stilt\,\bar\Lambda$ whose image is the Bongartz interval in $\stilt\,\bar\Lambda$ defined by $\Lambda$.  We then combine the two maps $\bar F, \bar G$ to obtain inclusions in Theorem~\ref{thm:intro2} and Corollary~\ref{cor:intro}.

\section{Background}

Let $\Lambda$ be a finite dimensional algebra over an algebraically closed field $k$.  Let $\text{mod}\,\Lambda$ denote the category of finite dimensional right $\Lambda$ modules. Let $\proj\,\Lambda, \projinj\,\Lambda$ denote the category of projective and projective-injective $\Lambda$-modules, respectively. Let $P_\Lambda(i), I_\Lambda(i), S_{\Lambda}(i)$ denote the indecomposable projective, injective, and simple module at vertex $i$, respectively. Given $M\in\mod\,\Lambda$, let $\left | M \right |$ be the number of nonisomorphic indecomposable summands of $M$, and $\pd_{\Lambda} M$ be its projective dimension.  A module $M$ is said to be basic if all of its indecomposable summands are nonisomorphic. 
Furthermore, $\Fac \, M$ is the set of all quotients of direct sums of $M$, and $\add\, M$ is the additive subcategory of $\mod\,\Lambda$ consisting of all direct sums of summands of $M$.   We let $\Omega_{\Lambda} M, \Omega_{\Lambda}^{-1} M$ be the syzygy and the cosyzygy of $M$.  
Lastly, we denote the standard duality functor by $D=\Hom_k(-, k)$ and the Auslander-Reiten translation by $\tau_{\Lambda}$.

\subsection{Tilting}
We recall some definitions and results from the theory of tilting and $\tau$-tilting.    We begin with the classical tilting theory. 

\begin{definition}
A basic module $M\in \text{mod}\,\Lambda$ is called \emph{tilting} if it satisfies all of the following conditions:
\begin{itemize}
\item[(a)] $M$ is rigid, that is $\Ext^1_{\Lambda}(M, M)=0$, 
\item[(b)] $\pd_{\Lambda} M\leq 1$, 
\item[(c)] $\left | M\right | = \left | \Lambda\right |$. 
\end{itemize}
Moreover, it is called \emph{partial tilting} if it satisfies (a) and (b). 
\end{definition}

Note that any partial tilting module can be completed to a tilting module by adding extra summands as shown by Bongartz \cite{B}. 

Let $\tilt\,\Lambda$ denote the set of all tilting $\Lambda$-modules up to isomorphisms.  Moreover, $\tilt\,\Lambda$ is equipped with a poset structure where the partial order is defined as follows: $M\leq M' \in \tilt\,\Lambda$ if $\Fac \, M \subseteq \Fac \, M'$.

Next, we review the relevant notions of $\tau$-tilting theory introduced by Adachi, Iyama, and Reiten following \cite{AIR}.  We also draw the reader's attention to the various parallels between tilting and $\tau$-tilting.  

\begin{definition}
A pair of basic modules $(M, P_{\Lambda})$ with $M\in \mod\,\Lambda, P_{\Lambda}\in\proj \,\Lambda$ is called \emph{support $\tau$-tilting} if it satisfies all of the following conditions:
\begin{itemize}
\item[(a)] $M$ is \emph{$\tau$-rigid}, that is $\Hom_{\Lambda}(M, \tau_{\Lambda}M)=0$, 
\item[(b)] $\Hom_{\Lambda}(P_{\Lambda}, M)=0$, 
\item[(c)] $\left | M\oplus P_{\Lambda}\right | = \left | \Lambda\right |$.
\end{itemize}
Moreover, a pair $(M, P_{\Lambda})$ is called \emph{$\tau$-rigid} if it satisfies (a) and (b).
\end{definition}

Note that a support $\tau$-tilting pair $(M, P_{\Lambda})$ is determined by $M$, where $M$ is called a \emph{support $\tau$-tilting module}.  Hence, we may sometimes omit projective modules in the second position and only focus on support $\tau$-tilting modules.   

Let $\stilt\,\Lambda$ denote the set of all support $\tau$-tilting pairs up to isomorphisms (meaning that $(M, P)\cong (M', P')$ if and only if $M\cong M$ and $P\cong P'$), or equivalently support $\tau$-tilting modules up to isomorphisms, in $\mod\,\Lambda$.  Moreover, $\stilt\,\Lambda$ is a poset where the partial order is defined as follows: $(M, P_{\Lambda})\leq (M', P_{\Lambda}') \in \stilt\,\Lambda$ if $\Fac \, M \subseteq \Fac \, M'$.  We observe that every tilting module $M$ is a support $\tau$-tilting module, or equivalently induces a support $\tau$-tilting pair $(M, 0)$, see \cite[Proposition 2.2]{AIR}.   This induces a cover-preserving embedding (an embedding of posets that preserves cover relations) from $\tilt \,\Lambda$ to $\stilt\,\Lambda$.    Hence $\tilt \,\Lambda$ is a cover-preserving subposet of $\stilt\,\Lambda$, and in this way support $\tau$-tilting theory is considered to be a generalization of tilting theory.

A $\tau$-rigid pair $(M, P_{\Lambda})$ can always be completed to a support $\tau$-tilting pair $(M\oplus M', P_{\Lambda}\oplus P_{\Lambda}')$ by adding extra summands.  Such a completion is not unique.  Let $\stilt_{(M, P_{\Lambda})} \Lambda$ denote the subset of $\stilt\,\Lambda$ consisting of all possible completions of a $\tau$-rigid pair $(M, P_{\Lambda})$; that is, 
\[ \stilt_{(M, P_{\Lambda})} \Lambda = \{ (M', P_{\Lambda}')\in \stilt\,\Lambda \mid M\in \add\, M', P_{\Lambda}\in \add \, P_{\Lambda}'\}.\]
  It is known that $\stilt_{(M, P_{\Lambda})} \Lambda$ forms an interval in $\stilt\,\Lambda$ called the \emph{Bongartz interval} of $(M, P_{\Lambda})$.  Moreover, the largest element of $\stilt_{(M, P_{\Lambda})} \Lambda$ is called the \emph{Bongartz completion} of $(M, P_{\Lambda})$, and it is of the form   $(M\oplus M', P_{\Lambda})$.  Similarly, the smallest element of $\stilt_{(M, P_{\Lambda})} \Lambda$ is called the \emph{Bongartz co-completion} of $(M, P_{\Lambda})$.  Note that the Bongartz co-completion of $(M, P_{\Lambda})$ may not be of the form $(M, P_{\Lambda}\oplus P')$ for some projective module $P'$. 

We also recall the notion of a torsion pair. A \emph{torsion pair} is a pair of full subcategories $(\mathcal{T}, \mathcal{F})$ in $\mod\,\Lambda$ such that $\Hom_{\Lambda}(T, F)=0$ for all $T\in \mathcal{T}, F\in \mathcal{F}$, and moreover $\mathcal{T}, \mathcal{F}$ are maximal with respect to this property.  Note that in this way $\mathcal{T}$ determines  $\mathcal{F}$.  Furthermore, every $\tau$-rigid module $T$ induces a torsion pair $(\mathcal{T}(T), \mathcal{F}(T))$ where $\mathcal{T}(T)=\Fac\, T$.

Given a torsion pair $(\mathcal{T}, \mathcal{F})$ and any $M\in\mod\,\bar\Lambda$ there exists a \emph{canonical sequence}, unique up to isomorphisms, 
\begin{equation}\label{eq:cs}
0\to tM \to M \to fM \to 0
\end{equation}
with $tM\in \mathcal{T}$ and $fM\in \mathcal{F}$, see for example \cite[Proposition VI. 1.5]{ASS}.  In this case $tM$ and $fM$ are called the \emph{torsion part} of $M$  and the \emph{torsion free part} of $M$ corresponding to the torsion pair $(\mathcal{T}, \mathcal{F})$.  For additional information on tilting theory we refer to \cite[Chapter VI]{ASS}.

Lastly, we review $\tau$-tilting reduction, an important technique in $\tau$-tilting theory developed by Jasso \cite{Jasso}.  Let $(M, P_{\Lambda})$ be a $\tau$-rigid pair in $\mod\,\Lambda$.  It defines a torsion pair $(\Fac\, M, M^\perp)$, where 
\[M^\perp = \{X \in \mod\,\Lambda\mid \Hom_{\Lambda}(M, X)=0\}.\]

Let $(M\oplus M', P_{\Lambda})$ be the Bongartz completion of   $(M, P_{\Lambda})$, and define  $B_{(M, P_{\Lambda})} = \End_{\Lambda}(M\oplus M')$ to be the endomorphism algebra of $M\oplus M'$.  
Let 
\[C_{(M, P_{\Lambda})}=B_{(M, P_{\Lambda})}/\left<e_M\right>\] 
be the quotient of $B_{(M, P_{\Lambda})}$ by the idempotent $e_M$ corresponding to the projective $B_{(M, P_{\Lambda})}$-module $\Hom_{\Lambda}(M\oplus M', M)$.  With this notation one of the main results of \cite{Jasso} can be stated as follows. 

\begin{thm}\cite[Theorem 3.16]{Jasso}\label{thm:red}
Let $(M, P_{\Lambda})$ be a $\tau$-rigid pair in $\mod\,\Lambda$.  Then there is an isomorphism of posets defined on support $\tau$-tilting modules 
\begin{align*}
\stilt _{(M, P_{\Lambda})} \Lambda &\to \stilt \,C_{(M, P_{\Lambda})} \\
N & \mapsto \Hom_{\Lambda}(M\oplus M', fN)
\end{align*}
where $fN\in M^\perp$ is defined by the canonical sequence \eqref{eq:cs} corresponding to the torsion pair $(\Fac\, M, M^\perp)$. 
\end{thm}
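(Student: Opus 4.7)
The plan is to realize the bijection via the functor $F = \Hom_\Lambda(T, -)$ with $T = M \oplus M'$ the Bongartz completion, prove it restricts to an equivalence on an appropriate subcategory, and transport support $\tau$-tilting data through it.

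First I would establish the foundational equivalence. The module $T$ serves as a relative Ext-progenerator of the right perpendicular category $M^\perp$, and its endomorphism algebra is $B_{(M, P_\Lambda)}$. Quotienting by $\langle e_M\rangle$ kills the $M$-summand of $F(T)$ — which is exactly the summand that fails to lie in $M^\perp$ — so $F$ restricts to an equivalence $F: M^\perp \xrightarrow{\sim} \mod\, C_{(M, P_\Lambda)}$. For $N \in \stilt_{(M, P_\Lambda)}\Lambda$ I would use the canonical sequence $0 \to tN \to N \to fN \to 0$: the part $tN \in \Fac\, M$ is absorbed by $M$ (which appears as a common summand across every element of the Bongartz interval), while $fN \in M^\perp$ carries the genuinely new data. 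I would then show that $F(fN)$, together with the appropriate projective $C_{(M, P_\Lambda)}$-modules coming from summands of $T$ not already accounted for, constitutes a support $\tau$-tilting pair for $C_{(M, P_\Lambda)}$. A summand count using $|N|+|P_\Lambda|=|\Lambda|$ and $|C_{(M, P_\Lambda)}| = |B_{(M, P_\Lambda)}| - |M|$ confirms the correct number of indecomposables.

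The inverse would send $\bar N \in \stilt\, C_{(M, P_\Lambda)}$ to the unique $N \in \stilt_{(M, P_\Lambda)} \Lambda$ whose torsion-free part $fN \in M^\perp$ corresponds under $F^{-1}$ to $\bar N$; explicitly $N$ is built from this $fN$ by extending by the appropriate object of $\Fac\, M$ dictated by the Bongartz completion data. Poset preservation follows because for $N \in \stilt_{(M, P_\Lambda)} \Lambda$ the torsion class $\Fac\, N$ is determined by $\Fac\, M$ together with the relative torsion class generated by $fN$ in $M^\perp$. Since $\Fac\, M$ is common across the Bongartz interval, containment $\Fac\, N \subseteq \Fac\, N'$ reduces to containment of the relative torsion classes in $M^\perp$, which under $F$ becomes $\Fac\, F(fN) \subseteq \Fac\, F(fN')$ in $\mod\, C_{(M, P_\Lambda)}$.

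The main obstacle I anticipate is the compatibility of $\tau$-rigidity with the equivalence $F$. The Auslander--Reiten translate does not commute with general functors, so one must show that a minimal $\add(T)$-presentation of $fN$ inside $M^\perp$ pushes forward under $F$ to a minimal projective presentation of $F(fN)$ over $C_{(M, P_\Lambda)}$, and that the condition $\Hom_\Lambda(N, \tau_\Lambda N) = 0$ translates cleanly to $\Hom(F(fN), \tau F(fN)) = 0$ on the $C_{(M, P_\Lambda)}$-side. This likely requires the AIR reformulation of $\tau$-rigidity in terms of morphisms between 2-term projective complexes, and the verification that $F$ induces a compatible functor on the corresponding 2-term silting categories. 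This comparison of projective presentations between the two sides is the technical heart of the argument.
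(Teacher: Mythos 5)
First, a point of reference: the paper offers no proof of this statement at all --- it is quoted verbatim from Jasso \cite{Jasso} --- so your proposal has to be measured against Jasso's argument rather than anything in this article.

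Your sketch has a genuine gap at its foundation: the functor $\Hom_{\Lambda}(M\oplus M',-)$ does \emph{not} restrict to an equivalence $M^{\perp}\to \mod\,C_{(M,P_{\Lambda})}$. The torsion-free class $M^{\perp}$ is in general not abelian, has the wrong size, and the functor is not even faithful on it. Concretely, take $\Lambda=k(1\to 2)$ and $M=S_{\Lambda}(1)$: the Bongartz completion is $T=S_{\Lambda}(1)\oplus P_{\Lambda}(1)$, so $C_{(M,0)}\cong k$ and $\mod\,C_{(M,0)}$ has a single indecomposable, while $M^{\perp}=\add(P_{\Lambda}(1)\oplus S_{\Lambda}(2))$ has two, and $\Hom_{\Lambda}(T,S_{\Lambda}(2))=0$, so the functor kills an object of $M^{\perp}$. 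The correct statement --- and the technical heart of Jasso's proof --- is that $\Hom_{\Lambda}(M\oplus M',-)$ induces an equivalence on the smaller ``$\tau$-perpendicular'' subcategory $M^{\perp}\cap{}^{\perp}(\tau_{\Lambda}M)\cap P_{\Lambda}^{\perp}=M^{\perp}\cap\Fac(M\oplus M')\cap P_{\Lambda}^{\perp}$, and one must first prove that this subcategory is wide (abelian, closed under kernels, cokernels and extensions) and that $M\oplus M'$ behaves as a progenerator relative to it, so that the essential image is exactly $\mod\,C_{(M,P_{\Lambda})}$. One then checks that $fN$ does land in this subcategory (it does: $N\in{}^{\perp}(\tau_{\Lambda}M)\cap P_{\Lambda}^{\perp}$, this is a torsion class, hence closed under the quotient $N\twoheadrightarrow fN$), and it is the exactness of this restricted equivalence that allows minimal $\add(M\oplus M')$-presentations inside the wide subcategory to be compared with minimal projective presentations over $C_{(M,P_{\Lambda})}$, so that $\tau$-rigidity, completions, and the order relation transfer.

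Because your essential-surjectivity argument, your summand count, and your planned transfer of $\tau$-rigidity all route through the false equivalence on all of $M^{\perp}$, the proof as proposed does not go through without this correction. The difficulty you flag at the end --- that $\tau$ does not commute with the functor --- is precisely where the wideness of the $\tau$-perpendicular category is needed; it cannot be sidestepped by the two-term-complex reformulation alone, since without an exact equivalence of abelian categories there is no way to match projective presentations on the two sides. Your description of the inverse and of order preservation via relative torsion classes is in the right spirit (Jasso indeed proves the poset statement through a bijection between torsion pairs in the interval $[\Fac\,M,\,{}^{\perp}(\tau_{\Lambda}M)\cap P_{\Lambda}^{\perp}]$ and torsion pairs of the reduced category), but it, too, must be formulated inside the $\tau$-perpendicular subcategory rather than inside $M^{\perp}$.
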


\subsection{Duplicated algebra}
We recall the construction and the main properties of a duplicated algebra, which is one of the main objects of study of this paper. 
Given an algebra $\Lambda$, the triangular matrix algebra 
\[\bar\Lambda = \begin{bmatrix}\Lambda & 0 \\ D\Lambda & \Lambda\end{bmatrix}\] 
is called the \emph{duplicated algebra of $\Lambda$}.  Observe that $\bar\Lambda$ contains two copies of the algebra $\Lambda$, which we will denote by $\Lambda$ and $\Lambda^\bullet$ such that $\Lambda^\bullet$ corresponds to the bottom right corner of the matrix representation of $\bar\Lambda$.  
Thus, the primitive orthogonal idempotents of $\bar\Lambda$ are the primitive idempotents of $\Lambda$ and $\Lambda^\bullet$.  Accordingly, every vertex $i$ of the quiver of $\Lambda$ yields two vertices of $\bar\Lambda$, which we will denote by $i, i^\bullet$.   Also, note that every $\Lambda$-module is also a module over $\bar\Lambda$, thus $\text{mod}\,\Lambda, \text{mod}\,\Lambda^\bullet$  are full subcategories of $\text{mod}\,\bar\Lambda$. 

The following statement describing the indecomposable projective and injective $\bar\Lambda$-modules follows from \cite{ABST}.  For more information on duplicated algebras see \cite{A, ANS}.

\begin{lemma}\cite[Section 1.3]{ABST}\label{lem:proj}
Let $\bar\Lambda$ be the duplicated algebra of an algebra $\Lambda$, then the following statements hold:
\begin{itemize}
\item[(a)] $P_{\bar\Lambda}(i)\cong P_{\Lambda}(i)$,
\item[(b)] $P_{\bar\Lambda}(i^\bullet)\cong I_{\bar\Lambda}(i)$, and there exists a short exact sequence $0\to I_{\Lambda}(i)\to P_{\bar\Lambda}(i^\bullet)\to P_{\Lambda^\bullet} (i^\bullet)\to 0$ in $\textup{mod}\,\bar\Lambda$,
\item[(c)] $I_{\bar\Lambda}(i^\bullet)\cong I_{\Lambda^\bullet}(i^\bullet)$.
\end{itemize}
\end{lemma}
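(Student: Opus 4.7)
The plan is to compute each indecomposable projective and injective right $\bar\Lambda$-module directly from the triangular matrix description
\[\bar\Lambda = \begin{bmatrix}\Lambda & 0 \\ D\Lambda & \Lambda\end{bmatrix}.\]
For each primitive idempotent $e_j$ of $\bar\Lambda$ with $j\in\{i,i^\bullet\}$, one has $P_{\bar\Lambda}(j) = e_j\bar\Lambda$ and $I_{\bar\Lambda}(j) = D(\bar\Lambda e_j)$, so each part of the lemma reduces to a short matrix computation together with one dimension comparison.

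For (a), multiplying the idempotent $e_i$, which sits in the top-left block, against an arbitrary element of $\bar\Lambda$ annihilates all entries except the top-left, producing $e_i\bar\Lambda\cong e_i\Lambda = P_\Lambda(i)$ with both $D\Lambda$ and $\Lambda^\bullet$ acting as zero. For (b), the analogous calculation gives $e_{i^\bullet}\bar\Lambda\cong e_iD\Lambda\oplus e_i\Lambda$ as a vector space, supported in the bottom row of $\bar\Lambda$. Using the standard identifications $e_iD\Lambda\cong D(\Lambda e_i)\cong I_\Lambda(i)$ of right $\Lambda$-modules and $e_i\Lambda\cong P_{\Lambda^\bullet}(i^\bullet)$ of right $\Lambda^\bullet$-modules, I would verify that the bottom-left piece is a $\bar\Lambda$-submodule (both $D\Lambda$ and $\Lambda^\bullet$ annihilate it) and that the quotient is $P_{\Lambda^\bullet}(i^\bullet)$, producing the claimed short exact sequence.

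To upgrade this to $P_{\bar\Lambda}(i^\bullet)\cong I_{\bar\Lambda}(i)$, I would compute the socle of $P_{\bar\Lambda}(i^\bullet)$ by annihilating against the Jacobson radical of $\bar\Lambda$ and conclude that it equals $\soc I_\Lambda(i) = S_\Lambda(i) = S_{\bar\Lambda}(i)$. This gives an embedding $P_{\bar\Lambda}(i^\bullet)\hookrightarrow I_{\bar\Lambda}(i)$; a parallel computation of $D(\bar\Lambda e_i)$ using $(D\Lambda)e_i\cong D(e_i\Lambda)$ then shows that both modules have the same $k$-dimension $\dim_k\Lambda e_i+\dim_k e_i\Lambda$, forcing the embedding to be an isomorphism. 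Part (c) follows from the symmetric calculation: right multiplication by $e_{i^\bullet}$ annihilates the top-left and bottom-left entries of $\bar\Lambda$, leaving $\bar\Lambda e_{i^\bullet}\cong \Lambda e_i$ concentrated in the bottom-right, on which only $\Lambda^\bullet$ acts nontrivially, and dualizing gives $I_{\bar\Lambda}(i^\bullet)\cong D(\Lambda^\bullet e_i)\cong I_{\Lambda^\bullet}(i^\bullet)$.

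The main technical obstacle is bookkeeping: keeping the left and right $\Lambda$-actions on the bimodule $D\Lambda$ straight throughout, and in (b) verifying that the short exact sequence is one of $\bar\Lambda$-modules rather than of underlying vector spaces. The nontrivial cross-action of $D\Lambda$ is precisely what glues $I_\Lambda(i)$ and $P_{\Lambda^\bullet}(i^\bullet)$ into the single indecomposable projective-injective module $P_{\bar\Lambda}(i^\bullet)\cong I_{\bar\Lambda}(i)$, rather than producing a split extension.
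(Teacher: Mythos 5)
The paper gives no proof of this lemma at all --- it is quoted directly from \cite{ABST} --- so there is no internal argument to compare against; your proposal supplies the standard derivation and it is correct. Viewing a right $\bar\Lambda$-module as a pair (a right $\Lambda$-module, a right $\Lambda^\bullet$-module, plus a structure map coming from $D\Lambda$), your computations $e_i\bar\Lambda\cong e_i\Lambda$, $e_{i^\bullet}\bar\Lambda\cong e_iD\Lambda\oplus e_i\Lambda$ (as vector spaces, with $e_iD\Lambda$ a $\bar\Lambda$-submodule annihilated by $D\Lambda$ and $\Lambda^\bullet$, and quotient $e_i\Lambda\cong P_{\Lambda^\bullet}(i^\bullet)$), and $\bar\Lambda e_{i^\bullet}\cong\Lambda e_i$ give (a), the exact sequence in (b), and (c) exactly as claimed, and the dimension count $\dim_k e_iD\Lambda+\dim_k e_i\Lambda=\dim_k\Lambda e_i+\dim_k e_i\Lambda=\dim_k D(\bar\Lambda e_i)$ is right.

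The one step you should make explicit in (b) is why $\soc P_{\bar\Lambda}(i^\bullet)$ is concentrated at the vertex $i$, since this is what produces the embedding into $I_{\bar\Lambda}(i)$. Annihilating by $\rad\bar\Lambda=\begin{bsmallmatrix}\rad\Lambda & 0\\ D\Lambda & \rad\Lambda\end{bsmallmatrix}$, an element $(x,y)\in e_iD\Lambda\oplus e_i\Lambda$ lies in the socle only if $y\cdot D\Lambda=0$, and this forces $y=0$ because $D\Lambda$ is faithful as a left $\Lambda$-module: if $\varphi(\lambda y)=0$ for all $\lambda\in\Lambda$ and all $\varphi\in D\Lambda$, then $y=0$. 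Hence $\soc P_{\bar\Lambda}(i^\bullet)=\soc I_\Lambda(i)=S_{\bar\Lambda}(i)$ is simple, the injective envelope is $I_{\bar\Lambda}(i)$, and your dimension comparison forces the embedding to be an isomorphism. This faithfulness is precisely the ``nontrivial cross-action'' you mention at the end, so your plan already contains the idea; it just needs to be invoked at the socle computation rather than only as a closing remark.
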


\begin{example}\label{ex:2}
If $\Lambda = k(1\to 2)$ then $\bar\Lambda$ is the path algebra of the following quiver with relation $\alpha^\bullet \beta \alpha=0$.  
\[\xymatrix{
1^\bullet \ar[r]^{\alpha^\bullet} & 2^\bullet  \ar[dl]_{\beta}\\
1\ar[r]^{\alpha} & 2
}\]
We see that $\bar\Lambda$ contains $\Lambda = k(1\xrightarrow{\alpha} 2)$ and $\Lambda^\bullet = k(1^\bullet \xrightarrow{\alpha^\bullet} 2^\bullet)$, and their corresponding module categories $\text{mod}\,\Lambda, \text{mod}\,\Lambda^\bullet$  are full subcategories of $\text{mod}\,\bar\Lambda$. 
Note that the indecomposable projective $\bar\Lambda$-modules are projective $\Lambda$-modules, $P(1)=\begin{smallmatrix}1\\2\end{smallmatrix}$ and $P(2)=\begin{smallmatrix}2\end{smallmatrix}$ together with the projective-injective $\bar\Lambda$-modules $P(1^\bullet) = \begin{smallmatrix}1^\bullet\\2^\bullet\\1\,\,\,\end{smallmatrix}$ and $P(2^\bullet) = \begin{smallmatrix}2^\bullet\\1\,\,\,\\2\,\,\,\end{smallmatrix}$. 
\end{example}

\section{Construction of the map $F$}

In this section we construct a map $F$ that maps $\Lambda$-modules to $\bar\Lambda$-modules.  We show that it is injective and describe its image inside $\text{mod}\,\bar\Lambda$ as nearly all $\bar\Lambda$-modules of projective dimension at most one. 

\begin{definition}
Define a map on modules
\begin{align*}
F: \textup{mod}\,\Lambda &\to \textup{mod}\,\bar\Lambda\\
M &\mapsto F(M)
\end{align*}
as follows.  Consider the following commutative diagram 

\begin{equation}\label{eq:def}
\xymatrix{
0\ar[r]&P^1_{\Lambda}\ar[r]^{\begin{bsmallmatrix}f\\g\end{bsmallmatrix}} \ar@{=}[d]& P^0_{\Lambda}\oplus P_{\bar\Lambda}' \ar[r] \ar[d]^{\begin{bsmallmatrix}1 & 0\end{bsmallmatrix}}& F(M)\oplus P_{\bar\Lambda}''\ar[r] \ar[d]& 0\\
&P^1_{\Lambda} \ar[r]^f& P^0_{\Lambda} \ar[r] & M \ar[r] & 0}
\end{equation}
where the bottom row is a projective presentation of $M$ in $\textup{mod}\,\Lambda$ and the top row is constructed as described below. 
Let $g: P_{\Lambda}^1\to P_{\bar\Lambda}'$ denote an injective envelope of $P^1_{\Lambda}$ in $\text{mod}\,\bar\Lambda$, and note that $P_{\bar\Lambda}'$ is projective-injective by Lemma~\ref{lem:proj}(b).   Define $F(M)$ to be the maximal summand of the cokernel of ${\begin{bsmallmatrix}f\\g\end{bsmallmatrix}}$ containing no nonzero summands of $\projinj\,\bar\Lambda$.   Thus, the cokernel is of the form $F(M)\oplus P_{\bar\Lambda}''$ where  $P_{\bar\Lambda}''$ is a maximal projective-injective summand of the cokernel which may possibly be zero. 

Note that the square on the left commutes by construction, so by the universal property of the cokernel there exists a map $F(M)\oplus P_{\bar\Lambda}''\to M$ that makes the second square commute.  
\end{definition}

Note that $F$ is only a map on the modules, and we do not define it on morphisms.  The next diagram illustrates a more precise relationship between $M$ and $F(M)$, which will play an important role in the forthcoming discussion.  

\begin{lemma}\label{lem:33}
Given $M\in\textup{mod}\,\Lambda$ there exists a commutative diagram in $\textup{mod}\,\bar\Lambda$ with exact rows and columns

\begin{equation}\label{eq:33}
\xymatrix{
& 0 \ar[d] & 0 \ar[d] & 0 \ar[d]\\
0\ar[r]& \Omega^2_{\Lambda} M \ar[d]\ar[r] & P_{\bar\Lambda} \ar[r] \ar[d]^{\begin{bsmallmatrix}0 \\ 1\end{bsmallmatrix}} & \ker\,\pi_M \ar[d]\ar[r] & 0\\
0\ar[r]&P^1_{\Lambda}\ar[r] \ar[d]& P^0_{\Lambda}\oplus P_{\bar\Lambda} \ar[r] \ar[d]^{\begin{bsmallmatrix}1 & 0\end{bsmallmatrix}}& F(M)\ar[r] \ar[d]^{\pi_M}& 0\\
0\ar[r]&\Omega_{\Lambda} M\ar[r] \ar[d]& P^0_{\Lambda} \ar[r] \ar[d]& M \ar[r]\ar[d] & 0\\
& 0 & 0 & 0}
\end{equation}
where $P^0_\Lambda$ is a projective cover of $M$,  $P^0_{\Lambda}\oplus P_{\bar\Lambda}$ is a projective cover of $F(M)$, and $P_{\bar\Lambda}$ is a projective-injective $\bar\Lambda$-module.  In particular, the middle row is a minimal projective resolution of $F(M)$ in $\textup{mod}\,\bar\Lambda$ and $\ker\,\pi_M \cong \Omega^{-1}_{\bar\Lambda} \Omega^2_{\Lambda} M$.  
\end{lemma}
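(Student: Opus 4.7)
The plan is to build the $3\times 3$ diagram directly from the defining sequence~\eqref{eq:def}. I take $P^1_\Lambda\xrightarrow{f} P^0_\Lambda\to M\to 0$ to be a \emph{minimal} projective presentation, so the bottom row is automatically the projective cover sequence of $M$ and the leftmost column is the continuation $0\to\Omega^2_\Lambda M\to P^1_\Lambda\to\Omega_\Lambda M\to 0$ of the minimal projective resolution of $M$.

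First I would construct the right column. The universal property of cokernels applied to~\eqref{eq:def} produces a surjection $\bar\pi_M\colon F(M)\oplus P''_{\bar\Lambda}\to M$. By Lemma~\ref{lem:proj}(b), every indecomposable projective-injective $\bar\Lambda$-module has top $S_{\bar\Lambda}(i^\bullet)$ concentrated on the dotted vertices, while $M\in\mod\,\Lambda$ has no dotted composition factors; hence the restriction of $\bar\pi_M$ to $P''_{\bar\Lambda}$ is zero. Thus $\bar\pi_M=\pi_M\oplus 0$, yielding the right column $0\to\ker\pi_M\to F(M)\to M\to 0$. The same vertex-type reasoning shows that any lift $s\colon P''_{\bar\Lambda}\hookrightarrow P^0_\Lambda\oplus P'_{\bar\Lambda}$ of the split inclusion $P''_{\bar\Lambda}\hookrightarrow F(M)\oplus P''_{\bar\Lambda}$ must land inside $P'_{\bar\Lambda}$, giving a decomposition $P'_{\bar\Lambda}=s(P''_{\bar\Lambda})\oplus P_{\bar\Lambda}$ with $P_{\bar\Lambda}$ projective-injective. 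Quotienting the middle and right terms of~\eqref{eq:def} by $s(P''_{\bar\Lambda})$ and $P''_{\bar\Lambda}$ respectively produces the middle row $0\to P^1_\Lambda\to P^0_\Lambda\oplus P_{\bar\Lambda}\to F(M)\to 0$, and the second column is the split exact sequence for this decomposition. The snake lemma applied to the middle and bottom rows then delivers the top row $0\to\Omega^2_\Lambda M\to P_{\bar\Lambda}\to\ker\pi_M\to 0$.

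Two minimality assertions remain. For the middle row to be a minimal projective resolution it suffices to show that~\eqref{eq:def} is already a projective cover sequence, i.e.\ that the image of $P^1_\Lambda$ lies in the radical: its first component is contained in $\Omega_\Lambda M\subseteq\rad P^0_\Lambda$ by minimality of the presentation, and its second component lies in $\bigoplus_i I_\Lambda(i)\subseteq\rad P'_{\bar\Lambda}$ because, by Lemma~\ref{lem:proj}(b), each summand $P_{\bar\Lambda}(i^\bullet)=I_{\bar\Lambda}(i)$ of $P'_{\bar\Lambda}$ contains $I_\Lambda(i)$ inside its radical. The main obstacle will be identifying $\ker\pi_M$ with the cosyzygy $\Omega^{-1}_{\bar\Lambda}\Omega^2_\Lambda M$, which amounts to showing that $\Omega^2_\Lambda M\hookrightarrow P_{\bar\Lambda}$ is an essential embedding. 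I would argue by contradiction: if some indecomposable summand $J\subseteq P_{\bar\Lambda}$ satisfied $J\cap\Omega^2_\Lambda M=0$, then $J$ would embed as a direct summand of $\ker\pi_M$, and by injectivity of $J$ the sequence $0\to\ker\pi_M\to F(M)\to M\to 0$ would split off $J$ as a projective-injective summand of $F(M)$, contradicting the defining property that $F(M)$ has no projective-injective summands.
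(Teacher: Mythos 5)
Your construction is correct and follows essentially the same route as the paper: build the bottom row from the minimal presentation, split the superfluous projective--injective summand $P''_{\bar\Lambda}$ off of $P'_{\bar\Lambda}$ to get the middle row, apply the snake lemma for the top row, and identify $\ker\pi_M$ with the cosyzygy by noting that a complement to an injective envelope of $\Omega^2_\Lambda M$ inside $P_{\bar\Lambda}$ would split off as a projective--injective summand of $F(M)$, contradicting the definition of $F$. The only (harmless) deviation is your justification of minimality of the middle row, via the containment of $\im\begin{bsmallmatrix}f\\ g\end{bsmallmatrix}$ in the radical using $I_\Lambda(i)\subseteq\rad P_{\bar\Lambda}(i^\bullet)$, where the paper instead argues that $P^1_\Lambda$ shares no indecomposable summands with $P_{\bar\Lambda}$, so non-minimality would contradict minimality of $P^1_\Lambda\to P^0_\Lambda$.
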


\begin{proof}
Recall the construction of the module $F(M)$ and the diagram \eqref{eq:def}.  We will show how to obtain \eqref{eq:33} from \eqref{eq:def}.  Observe that the bottom row of \eqref{eq:33} comes from the bottom row of \eqref{eq:def} by making the sequence exact on the left.  In particular, $P^0_\Lambda$ is the projective cover of $M$ in $\text{mod}\,\Lambda$, and the kernel of the projection $P^0_\Lambda\to M$ is the syzygy of $M$ denoted by $\Omega_\Lambda M$.  

Now consider the top sequence in \eqref{eq:def}. By definition it is a projective resolution of $F(M)\oplus P_{\bar\Lambda}''$, where $P_{\bar\Lambda}''$ is a projective-injective $\bar\Lambda$-module.  Then $P_{\bar\Lambda}''$ is a direct summand of $P_{\Lambda}^0\oplus P_{\bar\Lambda}'$.  Since $P_{\Lambda}^0$ is a projective $\Lambda$-module, then it does not contain any summands of $\projinj\,\bar\Lambda$.  Thus, $P_{\bar\Lambda}''$ is a direct summand of $P_{\bar\Lambda}'$.  Let $P_{\bar\Lambda}$ be obtained from $P_{\bar\Lambda}'$ by removing $P_{\bar\Lambda}''$.  This yields the short exact sequence appearing in the second row of  \eqref{eq:33}.  Note that this sequence is a projective resolution of $F(M)$, which is moreover minimal.  Indeed, $P_\Lambda^1$ does not contain any summands in common with $P_{\bar\Lambda}$, so if $P^1_{\Lambda}\to P^0_{\Lambda}\oplus P_{\bar\Lambda}$ were not minimal then neither would $P_{\Lambda}^1\to P_{\Lambda}^0$ be minimal.  The latter is a minimal projective resolution of $M$, which would be a contradiction.   Lastly, from  \eqref{eq:def} we also obtain the induced vertical maps in \eqref{eq:33} from the second row to the third that make the two squares commute.  Note that these vertical maps are indeed surjective. 

The top row of \eqref{eq:33} is obtained by taking the kernel of the vertical maps, which yields an exact sequence by the snake lemma.  Note that by definition $P_{\Lambda}^1$ is the projective cover of $\Omega_{\Lambda} M$, hence the kernel of the associated projection is $\Omega^2_{\Lambda} M$ as claimed.  

Finally, observe that if $\ker\,\pi_M$ were to contain any projective-injective $\bar\Lambda$-modules, then they would also have to be summands of $F(M)$, because of the short exact sequence appearing the third column of the diagram, contradicting the definition of the map $F$.  Hence, $\ker\,\pi_M$ does not contain any projective-injective summands, which means that $P_{\bar\Lambda}$ is an injective envelope of $\Omega^2_{\Lambda} M$ in $\text{mod}\,\bar\Lambda$, so the cokernel $\ker\,\pi_M$ of the inclusion $\Omega^2_{\Lambda}M \to P_{\bar\Lambda}$ is isomorphic to $\Omega^{-1}_{\bar\Lambda} \Omega^2_{\Lambda} M$.   This completes the proof of the lemma. 
\end{proof}

Next, we obtain some important properties of the map $F$.

\begin{prop}\label{prop:F}
Let $M, N\in \textup{mod}\,\Lambda$ then the following statements hold: 
\begin{itemize}
\item[(a)] $F(M\oplus N)\cong F(M)\oplus F(N)$,
\item[(b)] $\textup{pd}_{\bar\Lambda} F(M)\leq 1$,
\item[(c)] $F(M)\cong M$ if and only if $\textup{pd}_{\Lambda} M \leq 1$.  
\item[(d)] $M$ is indecomposable if and only if $F(M)$ is indecomposable.
\end{itemize}
\end{prop}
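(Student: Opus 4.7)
The plan is to handle the four claims in order, using diagram (3.2) throughout. Claim (a) is a routine check: take the direct sum of projective presentations of $M$ and $N$, use additivity of injective envelopes in $\bar\Lambda$, and note that $\projinj\,\bar\Lambda$ is closed under summands (Krull--Schmidt), so stripping off projective-injective summands commutes with $\oplus$. Claim (b) is immediate since the middle row of (3.2) is a projective resolution of $F(M)$ of length at most one.

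For (c), I would read off the top row of (3.2). If $\pd_\Lambda M\leq 1$ then $\Omega^2_\Lambda M=0$ forces $P_{\bar\Lambda}=0$ and $\ker\pi_M=0$, giving an isomorphism $\pi_M\colon F(M)\xrightarrow{\sim}M$. Conversely, $F(M)\cong M$ yields $\ker\pi_M=0$ and hence $\Omega^2_\Lambda M\cong P_{\bar\Lambda}$; but $\Omega^2_\Lambda M$ is a $\Lambda$-module and so has no composition factor of the form $S(i^\bullet)$, while every nonzero projective-injective $\bar\Lambda$-module has top supported on $\{S(i^\bullet)\}$, so both must vanish and $\pd_\Lambda M\leq 1$.

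For (d), the backward implication follows from (a) since $\pi_{M_i}$ surjects onto each nonzero $M_i$. For the forward implication, suppose $M$ is indecomposable and $F(M)=X\oplus Y$ with both summands nonzero. Additivity of minimal resolutions gives $P^1_\Lambda=\Omega_{\bar\Lambda}X\oplus\Omega_{\bar\Lambda}Y$ and $P(X)\oplus P(Y)\cong P^0_\Lambda\oplus P_{\bar\Lambda}$; decompose $P(X)=P^0_X\oplus P^\bullet_X$ and $P(Y)=P^0_Y\oplus P^\bullet_Y$ into $\Lambda$-projective and projective-injective parts. Since $\Hom_{\bar\Lambda}(P_{\bar\Lambda}(i^\bullet),P^0_\Lambda)=0$ (because $e_{i^\bullet}P^0_\Lambda=0$), the composite $P(X)\oplus P(Y)\cong P^0_\Lambda\oplus P_{\bar\Lambda}\twoheadrightarrow P^0_\Lambda$ kills the projective-injective parts and induces an isomorphism $P^0_X\oplus P^0_Y\cong P^0_\Lambda$. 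Under this identification the map $f\colon P^1_\Lambda\to P^0_\Lambda$ is block-diagonal, so $\Omega_\Lambda M=N_X\oplus N_Y$ where $N_X=\im(\Omega_{\bar\Lambda}X\to P^0_X)$ and $N_Y=\im(\Omega_{\bar\Lambda}Y\to P^0_Y)$. This yields a decomposition $M=P^0_X/N_X\oplus P^0_Y/N_Y$, and by minimality $N_X\subseteq\rad P^0_X$, so $P^0_X/N_X=0$ iff $P^0_X=0$. Since $M$ is indecomposable, after relabeling we may assume $P^0_Y=0$, i.e., $P(Y)$ is purely projective-injective.

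The final step is the key obstacle. With $P(Y)$ purely projective-injective, $\Omega_{\bar\Lambda}Y$ is a direct summand of $P^1_\Lambda$ whose image under $f$ is zero, so $\Omega_{\bar\Lambda}Y\subseteq\Omega^2_\Lambda M\subseteq\rad P^1_\Lambda$ (the last inclusion by minimality of the $\Lambda$-resolution of $M$). A direct summand contained in the radical has vanishing top, so Nakayama's lemma gives $\Omega_{\bar\Lambda}Y=0$. Then $Y\cong P(Y)$ is projective-injective, contradicting the defining property that $F(M)$ has no projective-injective summands; hence $Y=0$ and $F(M)$ is indecomposable.
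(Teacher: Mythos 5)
Your proof is correct and follows essentially the same route as the paper: (a) and (b) are read off the definition and the middle row of diagram \eqref{eq:33}, (c) from the outer rows and columns of \eqref{eq:33}, and (d) by splitting the minimal $\bar\Lambda$-projective resolution of $F(M)$ into blocks so that $M$ decomposes into the corresponding cokernels, with the converse coming from (a). The only substantive addition is in (d), where your radical/Nakayama step (if one summand $Y$ had purely projective-injective cover, then $\Omega_{\bar\Lambda}Y\subseteq \Omega^2_{\Lambda}M\subseteq \rad P^1_{\Lambda}$ would force $\Omega_{\bar\Lambda}Y=0$ and make $Y$ projective-injective) spells out carefully what the paper asserts tersely, namely that both cokernels $\coker f_1$ and $\coker f_2$ are nonzero.
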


\begin{proof}
Part (a) follows by definition of $F$, since projective presentation of a direct sum of modules is a direct sum of projective presentations of each of the summands.  Part (b) follows from the second row of \eqref{eq:33}, since $P_{\Lambda}^1$ is a projective $\bar\Lambda$-module by Lemma~\ref{lem:proj}(a).  Moreover, the same diagram implies that if $F(M)\cong M$ then $P_{\bar\Lambda}=0$ and $\Omega_\Lambda M \cong P_{\Lambda}^1$.  Hence $\textup{pd}_{\Lambda} M \leq 1$.  Conversely, if $\textup{pd}_{\Lambda} M \leq 1$ then $\Omega_\Lambda M \cong P_{\Lambda}^1$ and $\Omega^2_{\Lambda}M = 0$.  Thus, $\ker\,\pi_M \cong \Omega^{-1}_{\bar\Lambda} \Omega^2_{\Lambda} M=0$, which shows that $F(M)\cong M$, which completes the proof of part (c).   

To show (d), first suppose that $F(M) \cong N_1 \oplus N_2$ for some nonzero $N_1, N_2 \in \mod\,\bar\Lambda$ that have no summands of $\projinj\,\bar\Lambda$.  Then the minimal projective resolution of $F(M)$ from the second row of \eqref{eq:33} can also be written as follows, where $P_{\Lambda}^1 \cong  P_{\Lambda}^1(N_1)\oplus P_{\Lambda}^1(N_2)$ and $P_{\Lambda}^0 \cong  P_{\Lambda}^0(N_1)\oplus P_{\Lambda}^0(N_2)$.  Note that $f_1, f_2$ are nonzero since $N_1, N_2 \in \mod\,\bar\Lambda$ have no projective-injective summands. 

\[\xymatrix@C=30pt{
0\ar[r]& P_{\Lambda}^1(N_1)\oplus P_{\Lambda}^1(N_2)\ar[r]^-{\begin{bsmallmatrix}f_1&0\\ 0 & f_2 \\ \star & \star \end{bsmallmatrix}} & P_{\Lambda}^0(N_1)\oplus P_{\Lambda}^0(N_2)\oplus P_{\bar\Lambda} \ar[r]& N_1\oplus N_2 \ar[r]& 0}
\]

Moreover, by diagram \eqref{eq:33}, $M$ is the cokernel of $f=\begin{bsmallmatrix}f_1&0\\ 0 & f_2 \end{bsmallmatrix}: P_{\Lambda}^1 \to P_{\Lambda}^0$, where $f$ is a projective presentation of $M$.  This implies that $M \cong \coker\,f_1 \oplus \coker\,f_2$, and that the summands $\coker\,f_1, \coker\,f_2$ are nonzero.  This shows that if $M$ is decomposable then so is $F(M)$.  Conversely, part (a) implies that if $M$ is decomposable then so is $F(M)$, which completes the proof of (d). 
\end{proof}

\begin{lemma}\label{lem:inj}
The map $F: \textup{mod}\,\Lambda \to \textup{mod}\,\bar\Lambda$ is injective.
\end{lemma}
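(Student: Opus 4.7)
The plan is to show that $M$ can be reconstructed from $F(M)$ up to isomorphism by reading off the data of its minimal projective resolution; once that is established, the implication $F(M) \cong F(N) \implies M \cong N$ is immediate.

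First I would invoke Proposition~\ref{prop:F}(b) to conclude that $\textup{pd}_{\bar\Lambda} F(M) \leq 1$, so $F(M)$ admits a minimal projective resolution in $\textup{mod}\,\bar\Lambda$ that is unique up to isomorphism of complexes. By Lemma~\ref{lem:33}, this resolution has the explicit form
\[0 \to P^1_\Lambda \to P^0_\Lambda \oplus P_{\bar\Lambda} \to F(M) \to 0,\]
where $P^1_\Lambda$ and $P^0_\Lambda$ are projective $\Lambda$-modules and $P_{\bar\Lambda}$ is projective-injective in $\textup{mod}\,\bar\Lambda$.

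Next I would separate the projective-injective and non-projective-injective pieces canonically. By Krull--Schmidt applied to the middle term, combined with Lemma~\ref{lem:proj} — which identifies the indecomposable projectives as $P_{\bar\Lambda}(i) \cong P_\Lambda(i)$ (not projective-injective) and $P_{\bar\Lambda}(i^\bullet) \cong I_{\bar\Lambda}(i)$ (projective-injective) — the middle term splits as $P^0_\Lambda \oplus P_{\bar\Lambda}$ with both summands determined up to isomorphism. The same observation applied to the left-hand term shows that $P^1_\Lambda$ has no projective-injective summands and is thus recognized as a projective $\Lambda$-module. Writing the differential accordingly as $\begin{bsmallmatrix}f\\g\end{bsmallmatrix}$ produces a map $f: P^1_\Lambda \to P^0_\Lambda$, well-defined up to automorphisms of its source and target.

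Finally, the bottom row of the diagram in Lemma~\ref{lem:33} identifies $M$ with $\coker f$, so $M$ is recovered from $F(M)$ up to isomorphism, yielding injectivity. The one place requiring care is the canonicity of the splitting of the middle term (and hence of $f$), since the direct sum decomposition itself is not unique on the nose; however, Krull--Schmidt ensures the summands are unique up to isomorphism, which in turn makes $\coker f$ well-defined up to isomorphism, and no serious obstacle arises.
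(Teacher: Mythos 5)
Your overall strategy is the same as the paper's: recover $M$ as the cokernel of the component $f\colon P^1_\Lambda\to P^0_\Lambda$ of the differential in the minimal projective resolution of $F(M)$, then conclude $F(M)\cong F(N)\Rightarrow M\cong N$. However, the step you flag and then wave away is precisely where the content of the lemma lies, and the way you dismiss it is not valid. Krull--Schmidt tells you only that the summands $P^0_\Lambda$ and $P_{\bar\Lambda}$ of the middle term are determined up to isomorphism; it does \emph{not} tell you that the component map $f$ is determined up to automorphisms of its source and target, nor that $\coker f$ is well defined. An isomorphism between the two minimal resolutions (of $F(M)$ and $F(N)$, or between two decompositions of one resolution) is an isomorphism $\phi_0$ of the middle terms that need not respect the chosen direct sum decompositions, and in general, when there are nonzero maps in both directions between the two blocks, the cokernel of the ``first component'' of the differential genuinely depends on the decomposition (conjugating $\begin{bsmallmatrix}f\\ g\end{bsmallmatrix}$ by a non-triangular automorphism replaces $f$ by something like $f+vg$, whose cokernel can change). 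So the inference ``summands unique up to isomorphism $\Rightarrow$ $\coker f$ well defined'' is false as stated.

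What rescues the argument, and what the paper's proof supplies, is a directional Hom-vanishing specific to this situation: the top of every projective-injective $\bar\Lambda$-module lies over $\Lambda^\bullet$, so $\Hom_{\bar\Lambda}(P_{\bar\Lambda},P^0_\Lambda)=0$. Hence any isomorphism $\phi_0$ between the middle terms of the two minimal resolutions is forced to be block lower triangular, $\phi_0=\begin{bsmallmatrix}\phi_{00} & 0\\ \phi_{10} & \phi_{11}\end{bsmallmatrix}$, and the diagonal blocks $\phi_{00},\phi_{11}$ are then isomorphisms. Lifting the isomorphism $F(M)\cong F(N)$ to the resolutions and comparing first components gives $\phi_{00}f=f'\phi_1$, whence $M\cong\coker f\cong\coker f'\cong N$. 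You should add this vanishing argument explicitly; with it, your proof closes and coincides with the paper's.
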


\begin{proof}
Let $M, N\in \text{mod}\,\Lambda$ such that $F(M)\cong F(N)$.  We need to show that $M\cong N$.  Since $F(M)\cong F(N)$ then they have isomorphic projective resolutions that are of the form given in Lemma~\ref{lem:33}.  Moreover, we obtain the following commutative diagram with projective resolutions of $F(M)$ and $F(N)$
\[
\xymatrix{0 \ar[r] & P^1_{\Lambda} \ar[r]^{\begin{bsmallmatrix}f\\g \end{bsmallmatrix}} \ar[d]^{\phi_1}& P^0_{\Lambda}\oplus P_{\bar\Lambda} \ar[r] \ar[d]^{\phi_0}& F(M) \ar[r] \ar[d]^{\cong}& 0 \\
0 \ar[r] & P^1_{\Lambda} \ar[r]^{\begin{bsmallmatrix}f'\\g' \end{bsmallmatrix}}& P^0_{\Lambda}\oplus P_{\bar\Lambda} \ar[r] & F(N) \ar[r] & 0}
\]
where $\phi_0, \phi_1$ are isomorphisms.  Here, $P_{\bar\Lambda}\in \projinj\bar\Lambda$ and $P^1_{\Lambda}, P^0_{\Lambda}\in \proj\,\Lambda$.   Moreover, note that by construction in Lemma~\ref{lem:33} we have $M \cong \coker\, f$ and $N \cong \coker\,f'$. 

Note that the top of $P_{\bar\Lambda}$ is in $\Lambda^\bullet$, so $\text{Hom}_{\bar\Lambda}(P_{\bar\Lambda}, P^0_{\Lambda})=0$ and $\phi_0=\begin{bsmallmatrix}\phi_{00} & 0 \\ \phi_{10}& \phi_{11} \end{bsmallmatrix}$, where $\phi_{00}, \phi_{11}$ are isomorphisms.  Then by commutativity of the above diagram we obtain $\phi_{00} f = f' \phi_1$.  Hence, 
\[M \cong \coker\, f \cong \coker \, \phi_{00} f = \coker \, f' \phi_1 \cong \coker\, f' \cong N.\]
\end{proof}

Now we define a subcategory $\mathcal{H}(\bar\Lambda)$ of $\text{mod}\,\bar\Lambda$, which we will show equals the image of the map $F$. 

\begin{definition}
Let $\mathcal{H}(\bar\Lambda)$ denote the full subcategory of $\text{mod}\,\bar\Lambda$ consisting of all modules of projective dimension at most one which do not contain summands of $\projinj\,\bar\Lambda$ nor a summand of $\Omega^{-1}_{\bar\Lambda}(\proj\,\Lambda)$. 
\end{definition}

\begin{lemma}\label{lem:image}
The image of $F:\textup{mod}\,\Lambda\to \textup{mod}\,\bar\Lambda$ is contained in $\mathcal{H}(\bar\Lambda)$. 
\end{lemma}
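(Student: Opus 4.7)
The lemma requires showing, for every $M\in\mod\,\Lambda$, that $F(M)$ has projective dimension at most one, contains no summand in $\projinj\,\bar\Lambda$, and contains no summand in $\add\,\Omega^{-1}_{\bar\Lambda}(\proj\,\Lambda)$. The first property is Proposition~\ref{prop:F}(b), and the second is immediate from the construction in \eqref{eq:def}, since $F(M)$ is obtained by explicitly removing the maximal projective-injective summand of the cokernel. The real content of the lemma is the third property.

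The plan is to argue the third property by contradiction. A preliminary observation is that, for every $P\in\proj\,\Lambda$, the injective envelope $I(P)$ of $P$ in $\mod\,\bar\Lambda$ is projective-injective, and $0\to P\to I(P)\to \Omega^{-1}_{\bar\Lambda}(P)\to 0$ is the minimal projective resolution of $\Omega^{-1}_{\bar\Lambda}(P)$. Indeed, $\bar\Lambda$-submodules of $P$ coincide with its $\Lambda$-submodules (the $\Lambda^\bullet$- and $D\Lambda$-parts act as zero on $P$), so the socle of $P$ in $\mod\,\bar\Lambda$ consists of simples $S_{\bar\Lambda}(i)$ with $i$ a $\Lambda$-vertex, whose injective envelopes are the projective-injectives $P_{\bar\Lambda}(i^\bullet)$ by Lemma~\ref{lem:proj}(b). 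Furthermore $P$ has no $S_{\bar\Lambda}(i^\bullet)$ composition factors, so $P\subseteq\rad I(P)$ and the surjection $I(P)\to I(P)/P$ is a projective cover. Splitting this resolution shows that any direct summand $Y$ of $\Omega^{-1}_{\bar\Lambda}(Q)$, $Q\in\proj\,\Lambda$, admits a minimal projective resolution of the form $0\to P'\to I'\to Y\to 0$ with $P'\in\proj\,\Lambda$ and $I'$ projective-injective.

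Now suppose for contradiction that $F(M)\cong X\oplus Y$ with $Y$ a nonzero summand in $\add\,\Omega^{-1}_{\bar\Lambda}(\proj\,\Lambda)$. Since the direct sum of the minimal projective resolutions of $X$ and $Y$ is a minimal projective resolution of $F(M)$, it must be isomorphic to the one furnished by Lemma~\ref{lem:33}. Because $P^0_{\Lambda}\in\proj\,\Lambda$ has no projective-injective summands as a $\bar\Lambda$-module (its indecomposable summands are of the form $P_{\bar\Lambda}(i)$, which are distinct from the projective-injectives $P_{\bar\Lambda}(i^\bullet)$ appearing in Lemma~\ref{lem:proj}), Krull--Schmidt forces $P'$ to be a summand of $P^1_{\Lambda}$ and $I'$ to be a summand of $P_{\bar\Lambda}$. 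Since the resolution decomposes block-diagonally along $F(M)=X\oplus Y$, the restriction of the middle differential to the summand $P'\subseteq P^1_{\Lambda}$ lands inside $I'\subseteq P_{\bar\Lambda}$, so the component $f:P^1_{\Lambda}\to P^0_{\Lambda}$ vanishes on $P'$. If $P'\neq 0$, then $f$ factors through $P^1_{\Lambda}/P'$, contradicting the minimality of the projective presentation of $M$ asserted in Lemma~\ref{lem:33}; if $P'=0$, then $Y\cong I'$ is projective-injective, contradicting the second property already established. The most delicate step is the preliminary identification of the minimal projective resolution of $\Omega^{-1}_{\bar\Lambda}(P)$ and the verification that $I(P)$ is projective-injective; once that is in hand, the rest is Krull--Schmidt bookkeeping.
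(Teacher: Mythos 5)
Your proof is correct and takes essentially the same route as the paper: both arguments rest on identifying $0\to P_\Lambda\to I(P_\Lambda)\to\Omega^{-1}_{\bar\Lambda}P_\Lambda\to 0$ as a minimal projective resolution whose degree-zero term is projective-injective, and then comparing it, via uniqueness of minimal resolutions, with the resolution of $F(M)$ from Lemma~\ref{lem:33}; the paper just reduces to indecomposable $M$ and reads off that the projective cover $P^0_\Lambda$ of $M$ would have to be zero, whereas you derive the contradiction at the first syzygy term using minimality of the presentation of $M$. The one point worth stating explicitly in your Krull--Schmidt step is that $\Hom_{\bar\Lambda}(I',P^0_\Lambda)=0$ (the top of $I'$ lies in $\mod\,\Lambda^\bullet$ while $P^0_\Lambda\in\mod\,\Lambda$), which is what ensures the decomposition induced by $F(M)\cong X\oplus Y$ really places $I'$ inside $P_{\bar\Lambda}$ and kills the $P^0_\Lambda$-component of the differential on $P'$.
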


\begin{proof}
Since the map $F$ respects direct sum decompositions by Proposition~\ref{prop:F}(d), it suffices to show that the lemma holds for an indecomposable module $M\in \text{mod}\,\Lambda$.  By Proposition~\ref{prop:F}(b) the module $F(M)$ has projective dimension at most one.  Moreover, by definition $F(M)$ does not contain any projective-injective summands, so it suffices to show that $F(M)$ is not a summand of $\Omega^{-1}_{\bar\Lambda}(\proj\,\Lambda)$.  

Let $P_\Lambda \in \proj\,\Lambda$, and consider the following sequence 
\[0\to P_{\Lambda}\to I_{\bar\Lambda}\to \Omega^{-1}_{\bar\Lambda} P_{\Lambda}\to 0\]
where $I_{\bar\Lambda}$ is an injective envelope of $P_\Lambda$.  Since the socle of $P_{\Lambda}$ is in $\text{mod}\,\Lambda$, then by Lemma~\ref{lem:proj}(b) its injective envelope $I_{\bar\Lambda}$ is projective-injective.  In particular, the sequence above is also a minimal projective resolution of $\Omega^{-1}_{\bar\Lambda} P_{\Lambda}$ in $\text{mod}\,\bar\Lambda$.  If $F(M)\cong \Omega^{-1}_{\bar\Lambda}P_{\Lambda}$  then by Lemma~\ref{lem:33} the projective cover of $M$ in $\text{mod}\,\Lambda$ is zero.  Thus $M=0$ and $\Omega^{-1}_{\bar\Lambda}P_{\Lambda}=0$.  This shows the desired claim. 
\end{proof}

\begin{lemma}\label{lem:surj}
The map $F$ is surjective onto $\mathcal{H}(\bar\Lambda)$. 
\end{lemma}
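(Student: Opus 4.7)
The plan is to show, for any $N \in \mathcal{H}(\bar\Lambda)$, that the module $M := \coker(f)$ satisfies $F(M) \cong N$, where $f$ is constructed from the minimal projective resolution of $N$. Since $F$ respects direct sum decompositions by Proposition~\ref{prop:F}(a), and $\mathcal{H}(\bar\Lambda)$ is evidently closed under direct summands, it suffices to treat the case where $N$ is indecomposable.

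I first take a minimal projective resolution $0 \to Q^1 \to Q^0 \to N \to 0$ in $\mod\,\bar\Lambda$. I claim $Q^1 \in \add(\proj\,\Lambda)$: a projective-injective summand $P$ of $Q^1$ would, by injectivity of $P$, give a split mono $P \hookrightarrow Q^0$, and the resulting identity sub-complex $P \to P$ could then be cancelled, shrinking the resolution and contradicting its minimality. Decomposing $Q^0 = P^0_\Lambda \oplus P_{\bar\Lambda}$ via Lemma~\ref{lem:proj} and writing the differential as $\begin{bsmallmatrix}f\\g_0\end{bsmallmatrix}$ with $f\colon P^1_\Lambda \to P^0_\Lambda$ and $g_0\colon P^1_\Lambda \to P_{\bar\Lambda}$, I define $M := \coker(f) \in \mod\,\Lambda$.

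My aim is then to argue that this resolution coincides, up to isomorphism, with the minimal projective resolution of $F(M)$ described in Lemma~\ref{lem:33}. Two parallel minimality checks are needed. First, using that $N$ has no projective-injective summands, I would argue that $P_{\bar\Lambda}$ is precisely the injective envelope of $\ker f = \Omega^2_\Lambda M$ in $\mod\,\bar\Lambda$: writing $P_{\bar\Lambda} = I_{\bar\Lambda}(\ker f) \oplus P''$, any nonzero $P''$ would, by injectivity, split off $N$ as a projective-injective summand, contradicting $N \in \mathcal{H}(\bar\Lambda)$. Second, using that $N$ has no $\Omega^{-1}_{\bar\Lambda}(\proj\,\Lambda)$-summands, I would argue that $f$ is a minimal projective presentation of $M$ in $\mod\,\Lambda$, i.e. $\ker f \subseteq \rad P^1_\Lambda$: otherwise a nontrivial summand $P_e \subseteq P^1_\Lambda$ with $f|_{P_e} = 0$ would produce a direct summand of $N$ isomorphic to $\Omega^{-1}_{\bar\Lambda}(P_e)$, violating the hypothesis on $N$.

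Granted both minimality checks, Lemma~\ref{lem:33} supplies a minimal projective resolution of $F(M)$ with exactly the same projectives as that of $N$ and with the same first component $f$. The two second components both restrict to injective envelope inclusions on $\ker f$, so they differ by a map factoring through $\im f \subseteq P^0_\Lambda$; injectivity of the target summand $I_{\bar\Lambda}(\ker f)$ absorbs this discrepancy into an automorphism of $P^0_\Lambda \oplus I_{\bar\Lambda}(\ker f)$, producing the desired isomorphism $F(M) \cong N$. The main obstacle is the minimality of $f$: although the submodule $\Omega^{-1}_{\bar\Lambda}(P_e) \hookrightarrow N$ coming from the sub-complex $P_e \to I_{\bar\Lambda}(P_e)$ is always visible, showing it is a \emph{direct} summand will require adjusting the splittings $P^1_\Lambda = \bar P \oplus P_e$ and $P_{\bar\Lambda} = I_{\bar\Lambda}(P_e) \oplus I_{\bar\Lambda}(K'')$ and exploiting the vanishing of $\Hom_{\bar\Lambda}(X,Y)$ for $X$ a $\Lambda$-module and $Y$ a $\Lambda^\bullet$-module (which forces $g_0|_{\bar P}$ to land in the $I_\Lambda$-part of each projective-injective summand) in order to cancel the $I_{\bar\Lambda}(P_e)$-component of $g_0|_{\bar P}$ and thereby split the sub-complex off.
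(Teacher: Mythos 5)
Your outline takes a genuinely different route from the paper: instead of proving minimality of the data extracted from the resolution of $N$, the paper builds a comparison map from the resolution of $F(M)\oplus\Omega^{-1}_{\bar\Lambda}P$ into the resolution of $N$ (lifting the second component using injectivity of $P_{\bar\Lambda}$), proves this map is injective by a socle argument, shows its cokernel is projective-injective and splits off, and only at the very end invokes $N\in\mathcal{H}(\bar\Lambda)$ to discard the extra summands. Within your route, the reduction to indecomposables, your check (1) (a nonzero complement $P''$ of the injective envelope of $\ker f$ inside $P_{\bar\Lambda}$ splits off $N$ -- though the splitting is really because the quotient complex is $0\to P''$ with $P''$ projective, not ``by injectivity''), and your final identification of cokernels (first match the two envelope embeddings of $\ker f$ by an automorphism of $I_{\bar\Lambda}(\ker f)$, then absorb the difference, which factors through $\im f$, using injectivity) are all correct or readily completable.

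However, the step you yourself flag as ``the main obstacle'' -- check (2), that a summand $P_e\subseteq P^1_{\Lambda}$ with $f|_{P_e}=0$ forces $\Omega^{-1}_{\bar\Lambda}(P_e)$ to split off $N$ -- is exactly where the real content of the lemma sits, and the mechanism you propose does not supply it. The vanishing $\Hom_{\bar\Lambda}(X,Y)=0$ for $X\in\mod\,\Lambda$, $Y\in\mod\,\Lambda^\bullet$ only says that $g_0(P^1_{\Lambda})$ lies in the maximal $\Lambda$-submodule of $P_{\bar\Lambda}$; but every term you could hope to cancel against (compositions through $f|_{\bar P}$, through the $I''$-component, or through $g_e$) automatically lands in that same submodule, so this observation gives no leverage for removing the $I_{\bar\Lambda}(P_e)$-component of $g_0|_{\bar P}$. (It is also not true that a map into the $\Lambda$-part of $I_{\bar\Lambda}(P_e)$ must factor through $g_e(P_e)$: already for $\Lambda=k(1\to 2)$ and $P_e=P_{\Lambda}(2)$ the $\Lambda$-part of $P_{\bar\Lambda}(2^\bullet)$ strictly contains $P_e$.) What is actually needed is a socle/essentiality argument of the same kind as the paper's proof that $h_1$ is injective: for instance, set $K=\ker$ of the map $\bar P\oplus P_e\to P^0_{\Lambda}\oplus I''$ given by $f$ and the $I''$-component of $g_0$; injectivity of the differential embeds $K$ into $I_e:=I_{\bar\Lambda}(P_e)$, and since $g_0(P_e)$ is essential in $I_e$ one gets $\soc K\subseteq P_e$, hence $K\cap\bar P=0$; then injectivity of $I_e$ lets the $I_e$-component of the differential on $\bar P$ factor through its other components, which is precisely the retraction needed to split the subcomplex $(P_e\to I_e)$. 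Without an argument of this type, your proposal has a genuine gap at its central step.
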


\begin{proof}
Let $X$ be some nonzero module in $\mathcal{H}(\bar\Lambda)$.  Since $\pd_{\bar\Lambda} X\leq 1$, let 
\[0\to P^1_{\Lambda}\to P^0_{\Lambda}\oplus P_{\bar\Lambda}\to X \to 0\]
be a minimal projective resolution of $X$ in $\text{mod}\,\bar\Lambda$, where $P_{\bar\Lambda}$ is a maximal projective-injective summand of the projective cover of $X$.  Then $P^0_{\Lambda}\in\proj\,\Lambda$ and moreover $P^1_{\Lambda}\in\proj\,\Lambda$ because it cannot contain any summands of $\projinj\,\bar\Lambda$, as otherwise the resolution would not be minimal.  

If $P^1_{\Lambda}=0$, then $X$ is projective but not projective-injective, since $X\in  \mathcal{H}(\bar\Lambda)$.  Thus, $X\in \proj\,\Lambda$, so $F(X)=X$ by Proposition~\ref{prop:F}(c).  In particular, $X$ is in the image of $F$.

If $P^0_{\Lambda}=0$, then $P_{\bar\Lambda}\not=0$ and it is a minimal injective cover of $P^1_{\Lambda}$.   Thus, $X\cong \Omega^{-1}_{\bar\Lambda} P^1_{\Lambda}$, which contradicts $X\in  \mathcal{H}(\bar\Lambda)$.

Hence, suppose that $P^1_{\Lambda}, P^0_{\Lambda}$ are both nonzero.  Consider the map $f: P^1_{\Lambda}\to P^0_{\Lambda}$ coming from the sequence above.  Let $M = \coker\,f$.  We want to show that $F(M)\cong X$. 

Now consider the diagram 

\begin{equation}\label{eq:2}
\xymatrix{
0\ar[r] & P^1_{\Lambda}\ar[r]^-{\begin{bsmallmatrix}f\\g'\end{bsmallmatrix}}\ar@{=}[d] & P^0_{\Lambda}\oplus I(\ker\,f)\ar[r]\ar@{..>}[d]^-{\phi_0} & F(M)\oplus \Omega^{-1}_{\bar\Lambda} P \ar[r]\ar@{..>}[d]^{\phi} & 0 \\
0\ar[r] & P^1_{\Lambda}\ar[r]^-{\begin{bsmallmatrix}f\\g\end{bsmallmatrix}} & P^0_{\Lambda}\oplus P_{\bar\Lambda}\ar[r] & X\ar[r]& 0
}
\end{equation}

where the bottom sequence is the minimal projective presentation of $X$ from before. 
The top sequence is constructed as follows.  First, we can decompose the map $f:  P^1_{\Lambda}\to P^0_{\Lambda}$ into $f_{min}\oplus f'$ where $f_{min}: P^1_{min} \to P^0_{min}$ is the minimal projective presentation of $M = \coker\,f$, $f': P\oplus P'' \xrightarrow{[0, 1]} P''$ where $P, P''\in\proj\,\Lambda$.  Then $\coker\, f_{min} \cong \coker\, f = M$, and by Lemma~\ref{lem:33} there exists a short exact sequence for $F(M)$ 

\begin{equation}\label{eq:2a} \xymatrix@C=30pt{0\ar[r] & P^1_{min}  \ar[r]^-{\begin{bsmallmatrix}f_{min}\\ g_{min}\end{bsmallmatrix}} & P^0_{min}\oplus  I(\ker f_{min})\ar[r] & F(M) \ar[r] & 0} 
\end{equation}
where $I(\ker f_{min})$ is the minimal injective cover of $\ker f_{min}$ in $\mod\,\bar\Lambda$.   We also obtain the following short exact sequence 
\begin{equation}\label{eq:2b}
\xymatrix{0\ar[r] & P\ar[r] \oplus P''& I(\ker \,f')\oplus P'' \ar[r]& \Omega^{-1}_{\bar\Lambda}P\ar[r] & 0}
\end{equation}
where $I(\ker f')$ is the injective cover of $\ker\,f' = P$.  By taking a direct sum of the two short exact sequences \eqref{eq:2a} and \eqref{eq:2b}, we obtain the top row of the diagram \eqref{eq:2}, where $I(\ker\,f)$ is an injective cover of $\ker\,f$.  Now we want to show that there exists a map $\phi_0$ that makes the first square commute in \eqref{eq:2}, which then implies the existence of $\phi$.

Apply $\Hom_{\bar\Lambda}(-, P_{\bar\Lambda})$ to the top short exact sequence in diagram \eqref{eq:2} to obtain the following long exact sequence. 

\[\Hom_{\bar\Lambda}(P^0_{\Lambda}\oplus I(\ker\,f), P_{\bar\Lambda})\to \Hom_{\bar\Lambda}(P^1_{\Lambda}, P_{\bar\Lambda})\to \Ext^1_{\bar\Lambda}(F(M)\oplus \Omega^{-1}_{\bar\Lambda} P, P_{\bar\Lambda})\]

Note that the last term is zero, because $P_{\bar\Lambda}\in\projinj\,\bar\Lambda$, which implies the first map above is surjective.  Hence, the map $g \in \Hom_{\bar\Lambda}(P^1_{\Lambda}, P_{\bar\Lambda})$ factors through 
$\begin{bmatrix}f\\g'\end{bmatrix}$.   In particular there exists 
\[h=\begin{bmatrix}h_0 & h_1\end{bmatrix}: P^0_{\Lambda}\oplus I(\ker\,f)\to P_{\bar\Lambda}\] 
such that $g=h \circ \begin{bmatrix}f\\g'\end{bmatrix}$.   Now, we let $\phi_0=\begin{bmatrix}1 & 0 \\ h_0 & h_1\end{bmatrix}$, and observe that this makes the first square in diagram \eqref{eq:2} commute.  This completes the construction of this diagram.

Now, we want to show that $h_1$ is injective.  Let $\soc$ denote the socle of a module, and note that by construction we have 
\[\soc(\ker\,h_1)\subset \soc \,I(\ker\,f) = \soc (\im\, g')\]
since $I(\ker\,f)\in\projinj\,\bar\Lambda$ and $F(M)\oplus \Omega^{-1}_{\bar\Lambda}P$ has no summands of $I(\ker\, f)$.  Also, $\soc\,I(\ker\,f)=\soc(\ker\,f)$ since $I(\ker\,f)$ is the injective envelope of $\ker\,f$.  
Let $a$ be an element of $\soc(\ker\,h_1)$, then there exists $b\in \ker\,f\subset P^1_{\Lambda}$ such that $g'(b)=a$.  Thus, by commutativity of  \eqref{eq:2} we have 
\[\big( \phi_0 \circ \begin{bmatrix}f\\g'\end{bmatrix}\big) (b)=\phi_0 \begin{bmatrix}0\\a\end{bmatrix} = \begin{bmatrix}0\\ h_1(a)\end{bmatrix}=0 = \begin{bmatrix}f\\g \end{bmatrix} (b).\]
Since $\begin{bmatrix}f\\g \end{bmatrix}$ is injective, we conclude that $b=0$, so $a=0$.  Thus $\soc (\ker\, h_1)=0$ so $\ker\,h_1 =0$.  This shows that $h_1$ is injective.  In particular, by diagram \eqref{eq:2}  we obtain that the maps $\phi_0$ and $\phi$ are both injective.  

Now consider the short exact sequence 
\[0\to P^0_{\Lambda}\oplus I(\ker\,f)\xrightarrow{\phi_0} P^0_{\Lambda}\oplus P_{\bar\Lambda} \to \coker\,\phi_0\to 0.\]
In particular, this is a projective resolution of $\coker\,\phi_0$ in $\text{mod}\,\bar\Lambda$.  Because the map in the sequence on the summands $P^0_{\Lambda} \to  P^0_{\Lambda}$ is identity and since $I(\ker\,f)\in\projinj\,\bar\Lambda$, it follows that this sequence splits.  In particular, $\coker \, \phi_0$ is also a projective-injective.  
By the snake lemma we obtain that $\coker \, \phi_0\cong \coker \, \phi$.  Thus, the sequence 
\[0\to F(M)\oplus  \Omega^{-1}_{\bar\Lambda} P\to X \to \coker \, \phi \to 0\]
also splits, as it ends in an injective.  In particular, $X \cong F(M)\oplus  \Omega^{-1}_{\bar\Lambda} P \oplus \coker\,\phi$.  Since $X \in \mathcal{H}(\bar\Lambda)$ then it does not contain any summands of $\projinj\,\bar\Lambda$ nor summands of $\Omega^{-1}_{\bar\Lambda} \Lambda$.  This shows the desired claim that $X \cong F(M)$. 
\end{proof}

By combining the above results, we immediately obtain the following theorem. 

\begin{thm}\label{thm:bij}
The map $F: \textup{mod}\,\Lambda \to \mathcal{H}(\bar\Lambda)$ is a bijection. 
\end{thm}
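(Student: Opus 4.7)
The plan is to simply assemble the three preceding lemmas. All the substantive work has already been done: Lemma~\ref{lem:image} shows that the image of $F$ is contained in $\mathcal{H}(\bar\Lambda)$, so $F$ restricts to a well-defined map $F:\textup{mod}\,\Lambda \to \mathcal{H}(\bar\Lambda)$. Lemma~\ref{lem:inj} shows that this map is injective, since injectivity of $F$ as a map into $\textup{mod}\,\bar\Lambda$ automatically implies injectivity onto any subcategory containing its image. Finally, Lemma~\ref{lem:surj} gives surjectivity onto $\mathcal{H}(\bar\Lambda)$.

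Combining these three facts yields the bijection directly, with essentially no additional argument to supply. There is no real obstacle here; the theorem is a packaging statement whose content has been fully established in Lemmas~\ref{lem:inj}, \ref{lem:image}, and~\ref{lem:surj}.
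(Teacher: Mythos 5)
Your proposal is correct and matches the paper's own proof, which likewise cites Lemma~\ref{lem:image} for well-definedness and Lemmas~\ref{lem:inj} and~\ref{lem:surj} for injectivity and surjectivity. Nothing further is needed.
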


\begin{proof}
By Lemma~\ref{lem:image} the map $F$ is well-defined.  Moreover, $F$ is a bijection by Lemma~\ref{lem:surj} and Lemma~\ref{lem:inj}.
\end{proof}

\begin{example}\label{ex:3}
If $\Lambda$ is the path algebra of $\xymatrix{1\ar[r]^\alpha & 2\ar[r]^\beta & 3}$ modulo the relation $\alpha\beta=0$, then $\bar\Lambda$ is the path algebra of the following quiver with relations $\alpha\beta=\alpha^\bullet\beta^\bullet = 0$ and $\gamma\alpha = \beta^\bullet\delta$. 
\[\xymatrix{
1^\bullet\ar[r]^{\alpha^\bullet} & 2^\bullet\ar[r]^{\beta^\bullet} \ar[dl]_\gamma& 3^\bullet \ar[dl]_\delta\\
1\ar[r]^\alpha & 2\ar[r]^\beta & 3
}\]
Observe that the projective $\bar\Lambda$-modules are 
\[\begin{smallmatrix}1\\2\end{smallmatrix}\oplus \begin{smallmatrix}2\\3\end{smallmatrix}\oplus \begin{smallmatrix}3\end{smallmatrix}\oplus \begin{smallmatrix}1^\bullet\\2^\bullet\\1\,\,\end{smallmatrix}\oplus \begin{smallmatrix}2^\bullet\\ 1\,3^\bullet\\ 2\,\,\end{smallmatrix}\oplus \begin{smallmatrix}3^\bullet\\2\,\,\\3\,\,\end{smallmatrix}\]
where the first three are also projective $\Lambda$-modules and the last three are the projective-injective $\bar\Lambda$-modules. 

The image of the simple module $S(1) \in \mod\,\Lambda$ under $F$ can be computed as follows.  The projective presentation of $S(1)$ is 
\[P_{\Lambda}(2)\to P_{\Lambda}(1)\to S(1)\to 0.\]
Then in order to make the map on the left injective we add $P_{\bar\Lambda}(3^\bullet)$ and obtain 
\[P_{\Lambda}(2)\to P_{\Lambda}(1)\oplus P_{\bar\Lambda}(3^\bullet)\to {\begin{smallmatrix} 3^\bullet \,\,1\\2\end{smallmatrix}}\to 0.\]
Thus $F(S(1))={\begin{smallmatrix} 3^\bullet \,\,1\\2\end{smallmatrix}}$.   Moreover, we can visualize the map $F$ on the level of Auslander-Reiten quivers as shown below.  Here the image of $F$ corresponds to the circled entries $\mathcal{H}(\bar\Lambda)$.  The boxed entries are the projective-injective $\bar\Lambda$-modules while the entries circled by dashes correspond to summands of $\Omega^{-1}_{\bar\Lambda} (\proj\,\Lambda)$.  Together all the highlighted $\bar\Lambda$-modules are precisely the modules of projective dimension at most 1.

\[
\xymatrix@C=7pt@R=7pt{\\ \\
&{\begin{smallmatrix}2\\3\end{smallmatrix}}\ar[dr] \\
{\begin{smallmatrix}3 \end{smallmatrix}}\ar[ur] && {\begin{smallmatrix}2 \end{smallmatrix}}\ar[dr] && {\begin{smallmatrix}1\end{smallmatrix}} & \ar@{^{(}->}[rr]^{F} &&\\
&&&{\begin{smallmatrix}1\\2\end{smallmatrix}}\ar[ur]
}
\xymatrix@C=7pt@R=7pt{
&& *+[F]{\begin{smallmatrix}3^\bullet\\2\,\,\\3\,\,\end{smallmatrix}} \ar[dr]\\
&*+[o][F]{\begin{smallmatrix}2\\3\end{smallmatrix}}\ar[dr]\ar[ur] 
&&*+[F-o]{\begin{smallmatrix}3^\bullet\\2\,\,\end{smallmatrix}}\ar[dr] 
&& {\begin{smallmatrix}1\end{smallmatrix}}\ar[dr]&&
*+[F-o]{\begin{smallmatrix}2^\bullet\\3^\bullet \end{smallmatrix}}\ar[dr]\\
*+[o][F]{\begin{smallmatrix}3\end{smallmatrix}}\ar[ur] 
&& *+[o][F]{\begin{smallmatrix}2\end{smallmatrix}}\ar[ur]\ar[dr]
&& *+[o][F]{\begin{smallmatrix}3^\bullet \,1\\\,2\end{smallmatrix}}\ar[ur]\ar[dr]\ar[r] 
& *+[F]{\begin{smallmatrix} 2^\bullet\\1\,3^\bullet\\2\,\end{smallmatrix}}\ar[r] 
& {\begin{smallmatrix}2^\bullet\\1\,3^\bullet\end{smallmatrix}}\ar[ur]\ar[dr] 
&& {\begin{smallmatrix}2^\bullet \end{smallmatrix}} \ar[dr]
&& {\begin{smallmatrix}1^\bullet \end{smallmatrix}}\\
&&& *+[o][F]{\begin{smallmatrix}1\\2\end{smallmatrix}}\ar[ur] 
&& *+[F-o]{\begin{smallmatrix}3^\bullet\end{smallmatrix}}\ar[ur]
&& {\begin{smallmatrix}2^\bullet \\1\,\, \end{smallmatrix}}\ar[ur]\ar[dr]
&& {\begin{smallmatrix}1^\bullet\\2^\bullet\end{smallmatrix}}\ar[ur]\\
&&&&&&&&*+[F]{\begin{smallmatrix}1^\bullet\\2^\bullet \\ 1\,\,\end{smallmatrix}}\ar[ur]
}
\]
\end{example}

\section{$\tau$-tilting as classical tilting}

In this section we show that the map $F$ can be extended to a map $\bar F$ that yields a bijection between the support $\tau$-tilting pairs of $\Lambda$ and tilting modules of $\bar\Lambda$.  In this way $\tau$-tilting theory can be thought of as a special case of tilting theory.  

\subsection{The map $F$ and rigidity}
We begin with a number of preliminary lemmas about how the map $F$ preserves rigidity. 

\begin{lemma}
If $F(M)\in \textup{mod}\,\bar\Lambda$ is rigid then $M\in \textup{mod}\,\Lambda$ is rigid. 
\end{lemma}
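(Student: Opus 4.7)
The plan is to use the standard characterization of $\tau$-rigidity via minimal projective presentations \cite[Proposition 2.4]{AIR}: given the minimal projective presentation $P^1_\Lambda \xrightarrow{f} P^0_\Lambda \to M \to 0$, the module $M$ is $\tau$-rigid if and only if every morphism $\psi\colon P^1_\Lambda \to M$ in $\mod\,\Lambda$ factors through $f$. On the $\bar\Lambda$-side, applying $\Hom_{\bar\Lambda}(-,F(M))$ to the minimal projective resolution of $F(M)$ furnished by the middle row of diagram~\eqref{eq:33}, the hypothesis $\Ext^1_{\bar\Lambda}(F(M),F(M))=0$ becomes the statement that every morphism $P^1_\Lambda \to F(M)$ in $\mod\,\bar\Lambda$ factors through the differential $\begin{bsmallmatrix}f\\g\end{bsmallmatrix}\colon P^1_\Lambda \to P^0_\Lambda \oplus P_{\bar\Lambda}$.

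Starting from an arbitrary $\psi\colon P^1_\Lambda \to M$, I would first lift it through the surjection $\pi_M\colon F(M) \to M$ from Lemma~\ref{lem:33}, which is possible because $P^1_\Lambda$ remains projective over $\bar\Lambda$ by Lemma~\ref{lem:proj}(a). Rigidity of $F(M)$ then yields maps $h_0\colon P^0_\Lambda \to F(M)$ and $h_1\colon P_{\bar\Lambda} \to F(M)$ so that the lift $\tilde\psi$ equals $h_0 f + h_1 g$. Composing with $\pi_M$ recovers $\psi = (\pi_M h_0)f + (\pi_M h_1)g$, and the pivotal observation is that $\pi_M h_1 = 0$: the projective-injective $\bar\Lambda$-module $P_{\bar\Lambda}$ has top supported at vertices $i^\bullet$ of $\Lambda^\bullet$, while the $\Lambda$-module $M$ has no composition factors at those vertices, so $\Hom_{\bar\Lambda}(P_{\bar\Lambda},M)=0$. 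Hence $\psi = (\pi_M h_0)\circ f$ factors through $f$, proving that $M$ is $\tau$-rigid.

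The argument is a short diagram chase, so the hard part is more conceptual than technical: one must read ``rigid'' on the $\Lambda$-side as $\tau$-rigid, matching the classical $\Ext^1$-rigidity of $F(M)$ on the $\bar\Lambda$-side. This is the natural reading, since the section is building toward the identification $\stilt\,\Lambda \cong \tilt\,\bar\Lambda$, which pairs $\Hom_\Lambda(M,\tau_\Lambda M)=0$ on the left with $\Ext^1_{\bar\Lambda}(F(M),F(M))=0$ on the right, with the map $F$ transporting one notion of rigidity to the other.
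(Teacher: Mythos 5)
Your argument is correct, but it takes a genuinely different route from the paper and in fact proves more than the lemma asks. The paper's proof stays at the level of $\Ext$ groups: it applies $\Hom_{\bar\Lambda}(F(M),-)$ and then $\Hom_{\bar\Lambda}(-,M)$ to the sequence $0\to\ker\pi_M\to F(M)\to M\to 0$ from diagram~\eqref{eq:33}, using $\pd_{\bar\Lambda}F(M)\le 1$ and the fact that the top of $\ker\pi_M$ lies in $\mod\,\Lambda^\bullet$, to conclude only $\Ext^1_{\bar\Lambda}(M,M)=0$ --- i.e.\ classical rigidity, which is what ``rigid'' means in the statement (the $\tau$-rigidity upgrade is the separate Proposition~\ref{prop:rigid}, proved later via Lemmas~\ref{lem:tau}--\ref{lem:tau2} and the Auslander--Reiten formula). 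You instead combine the AIR factorization criterion for $\Hom_\Lambda(M,\tau_\Lambda M)=0$ with the minimal projective resolution of $F(M)$ from Lemma~\ref{lem:33}: lifting $\psi\colon P^1_\Lambda\to M$ through $\pi_M$, factoring the lift through $\begin{bsmallmatrix}f\\ g\end{bsmallmatrix}$ by rigidity of $F(M)$, and killing the $P_{\bar\Lambda}$-component via $\Hom_{\bar\Lambda}(P_{\bar\Lambda},M)=0$ is a clean chase, and every ingredient (projectivity of $P^1_\Lambda$ over $\bar\Lambda$, minimality of both presentations, fullness of $\mod\,\Lambda$ in $\mod\,\bar\Lambda$) checks out. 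The payoff of your route is that it yields $\tau$-rigidity of $M$ directly, effectively giving an alternative proof of Proposition~\ref{prop:rigid} that bypasses the Nakayama-functor comparison lemmas for this direction; the payoff of the paper's route is that it proves exactly the stated lemma with nothing beyond two long exact sequences. The only small point to make explicit if your proof is to stand in for the lemma as written: $\tau$-rigid implies rigid (by the Auslander--Reiten formula, $\Ext^1_\Lambda(M,M)$ is a quotient of a dual of $\Hom_\Lambda(M,\tau_\Lambda M)$), so your stronger conclusion does subsume the stated one.
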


\begin{proof}
Let $F(M)\in\text{mod}\,\bar\Lambda$ be rigid, and consider the following short exact sequence constructed in diagram~\eqref{eq:33}.  

\begin{equation}\label{sq:1}
0\to \ker\,\pi_M \to F(M)\xrightarrow{\pi} M \to 0
\end{equation}

Apply $\text{Hom}_{\bar\Lambda}(F(M), - )$ to get 
\[ \Ext^1_{\bar\Lambda}( F(M), F(M)) \to \Ext^1_{\bar\Lambda}( F(M), M) \to \Ext^2_{\bar\Lambda}( F(M), \ker\,\pi_M).\]
Note that the first term is zero since $F(M)$ is rigid, and the last term is zero by Proposition~\ref{prop:F}(b).  This implies that 
$\Ext^1_{\bar\Lambda}( F(M), M) =0$. 

Now apply $\text{Hom}_{\bar\Lambda}(-, M )$ to the sequence \eqref{sq:1} to obtain 
\[ \text{Hom}_{\bar\Lambda}(\ker\,\pi_M, M )\to \text{Ext}^1_{\bar\Lambda}(M, M )\to \text{Ext}^1_{\bar\Lambda}(F(M), M ).\]
The last term is zero by above, and the first term is zero because $\tp(\ker\,\pi_M)\in \text{mod}\,\Lambda^\bullet$ by diagram \eqref{eq:33}.   Therefore, $\text{Ext}^1_{\bar\Lambda}(M, M )=0$ which implies the desired conclusion. 
\end{proof}

The next three lemmas describe an important relationship between $\tau$ in $\text{mod}\,\Lambda$ and $\text{mod}\,\bar\Lambda$. 

\begin{lemma}\label{lem:tau}
Let $M\in \textup{mod}\,\Lambda$.  Then there exists a short exact sequence in $\textup{mod}\,\bar\Lambda$ 
\[ 0 \to \tau_\Lambda M \to \tau_{\bar\Lambda} M \to X^\bullet\to 0\]
such that $X^\bullet\in \textup{mod}\,\Lambda^\bullet$. 
\end{lemma}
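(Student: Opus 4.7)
My plan is to compute $\tau_\Lambda M$ and $\tau_{\bar\Lambda} M$ from a single common projective presentation and compare them via the Nakayama functor, with the short exact sequence of Lemma~\ref{lem:proj}(b) providing the bridge between the two.

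First I would fix a minimal projective presentation $P_\Lambda^1 \xrightarrow{f} P_\Lambda^0 \to M \to 0$ of $M$ in $\mod\,\Lambda$ and argue that it is also a minimal projective presentation in $\mod\,\bar\Lambda$. By Lemma~\ref{lem:proj}(a) every projective $\Lambda$-module remains projective over $\bar\Lambda$, and since $M \in \mod\,\Lambda$, the tops of both $M$ and $\Omega_\Lambda M$ consist of simples $S_\Lambda(i) = S_{\bar\Lambda}(i)$, so no projective-injective $P_{\bar\Lambda}(j^\bullet)$ is needed in either projective cover over $\bar\Lambda$.

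Next, using the identifications $\nu_\Lambda P_\Lambda(i) = I_\Lambda(i)$ and $\nu_{\bar\Lambda} P_\Lambda(i) = I_{\bar\Lambda}(i) = P_{\bar\Lambda}(i^\bullet)$ from Lemma~\ref{lem:proj}(a,b), the short exact sequence in Lemma~\ref{lem:proj}(b) rewrites as
\[ 0 \to \nu_\Lambda P_\Lambda(i) \to \nu_{\bar\Lambda} P_\Lambda(i) \to P_{\Lambda^\bullet}(i^\bullet) \to 0. \]
Applying this naturally to the map $f$ would produce a commutative diagram with exact rows
\[
\xymatrix{
0 \ar[r] & \nu_\Lambda P_\Lambda^1 \ar[r] \ar[d]_{\nu_\Lambda f} & \nu_{\bar\Lambda} P_\Lambda^1 \ar[r] \ar[d]_{\nu_{\bar\Lambda} f} & Q^1 \ar[r] \ar[d]^g & 0 \\
0 \ar[r] & \nu_\Lambda P_\Lambda^0 \ar[r] & \nu_{\bar\Lambda} P_\Lambda^0 \ar[r] & Q^0 \ar[r] & 0
}
\]
where $Q^0, Q^1$ are projective $\Lambda^\bullet$-modules. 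The snake lemma then yields an exact sequence of kernels
\[ 0 \to \tau_\Lambda M \to \tau_{\bar\Lambda} M \to \ker g, \]
and taking $X^\bullet$ to be the image of the final map gives the desired short exact sequence $0 \to \tau_\Lambda M \to \tau_{\bar\Lambda} M \to X^\bullet \to 0$ with $X^\bullet \subseteq \ker g \subseteq Q^1 \in \mod\,\Lambda^\bullet$.

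The most delicate step will be justifying that the short exact sequence from Lemma~\ref{lem:proj}(b) is natural in $P_\Lambda(i)$, so that the vertical arrows $\nu_\Lambda f$, $\nu_{\bar\Lambda} f$, and $g$ genuinely fit into the commutative diagram above. This should follow by characterizing $I_\Lambda(i)$ as the largest $\bar\Lambda$-submodule of $I_{\bar\Lambda}(i)$ supported on non-$\bullet$ vertices, since such a submodule is automatically preserved by any $\bar\Lambda$-linear map among injectives at non-$\bullet$ vertices.
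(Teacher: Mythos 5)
Your proposal is correct and follows essentially the same route as the paper: a minimal projective presentation of $M$ common to $\Lambda$ and $\bar\Lambda$, the natural embedding $\nu_\Lambda P\hookrightarrow\nu_{\bar\Lambda}P$ with cokernel a projective $\Lambda^\bullet$-module coming from Lemma~\ref{lem:proj}(b), and the snake lemma to identify $X^\bullet$ as a submodule of a projective $\Lambda^\bullet$-module. The only difference is that you transpose the diagram (rows the comparison sequences, columns $\nu_\Lambda f$, $\nu_{\bar\Lambda}f$, $g$) rather than comparing the two left-exact Nakayama sequences directly, and the naturality point you flag is treated at the same level in the paper's argument.
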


\begin{proof}
Let 
\begin{equation}\label{eq:4}
P^1_{\Lambda} \to P^0_{\Lambda} \to M \to 0
\end{equation}
be a minimal projective presentation of $M$ in $\text{mod}\,\Lambda$, which is also a minimal projective presentation of $M$ in $\text{mod}\,\bar\Lambda$ by Lemma~\ref{lem:proj}(a).   Let $\nu_{\Lambda}, \nu_{\bar\Lambda}$ denote the Nakayama functors in the respective module categories.   Consider the following diagram, where the first row is obtained by applying $\nu_{\Lambda}$ to \eqref{eq:4} and the second row is obtained by applying $\nu_{\bar\Lambda}$ to the same sequence. 

\[\xymatrix{
0\ar[r] & \tau_{\Lambda} M \ar[r] \ar@{^{(}..>}[d]^i & \nu_{\Lambda} P^1_{\Lambda} \ar@{^{(}->}[d]^{i_1}\ar[r] & \nu_{\Lambda}P^0_{\Lambda} \ar@{^{(}->}[d]^{i_0}\\
0\ar[r] & \tau_{\bar\Lambda} M \ar[r] \ar@{->>}[d] & \nu_{\bar\Lambda} P^1_{\Lambda} \ar@{->>}[d]\ar[r] & \nu_{\bar\Lambda}P^0_{\Lambda} \\
&\coker\,i \ar@{^{(}->}[r] & P_{\Lambda^\bullet}^1
}\]

Recall that $\nu_{\bar\Lambda}$ of a projective $\Lambda$-module $P_{\Lambda}$ is a projective-injective $\bar\Lambda$-module $\nu_{\bar\Lambda}P_{\Lambda}$ by Lemma~\ref{lem:proj}(b).  Moreover, $\nu_{\Lambda} P_{\Lambda}$ embeds into $\nu_{\bar\Lambda}P_{\Lambda}$ and the cokernel is a projective $\Lambda^\bullet$-module $P_{\Lambda^\bullet}$.  
This implies that we have vertical injective maps $i_1, i_0$ as in the diagram, which make the rightmost square commute.   Then by the universal property of the kernel there exists a map $i: \tau_{\Lambda} M \to \tau_{\bar\Lambda} M$ that makes the first square commute.  In particular, $i$ is injective.   

Now, the snake lemma implies that $\coker\, i$ is a submodule of a projective $\Lambda^\bullet$-module $P_{\Lambda^\bullet}^1$, which implies that $\coker\,i \in \text{mod}\,\Lambda^\bullet$.  Setting $X^\bullet = \coker\,i$ we obtain the short exact sequence as in the statement of the lemma.
\end{proof}

\begin{lemma}\label{lem:tau1}
Let $M\in \textup{mod}\,\Lambda$. Then $\Hom_{\bar\Lambda}(M, \tau_{\bar\Lambda} M) \cong \Hom_{\Lambda}(M, \tau_{\Lambda} M)$.
\end{lemma}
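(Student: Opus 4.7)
The plan is to apply $\Hom_{\bar\Lambda}(M, -)$ to the short exact sequence
\[0 \to \tau_\Lambda M \to \tau_{\bar\Lambda} M \to X^\bullet \to 0\]
from Lemma~\ref{lem:tau} and read off the isomorphism from the resulting long exact sequence. Left exactness of $\Hom$ gives
\[0 \to \Hom_{\bar\Lambda}(M, \tau_\Lambda M) \to \Hom_{\bar\Lambda}(M, \tau_{\bar\Lambda} M) \to \Hom_{\bar\Lambda}(M, X^\bullet),\]
so it suffices to verify two things: that the last term vanishes, and that the first term equals $\Hom_\Lambda(M, \tau_\Lambda M)$.

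For the vanishing $\Hom_{\bar\Lambda}(M, X^\bullet) = 0$, note that $M \in \mod\,\Lambda$ is supported on the vertices of $\Lambda$ while $X^\bullet \in \mod\,\Lambda^\bullet$ is supported on the vertices of $\Lambda^\bullet$, and by the matrix form of $\bar\Lambda$ the only arrows between these two sets of vertices go from $\Lambda^\bullet$ to $\Lambda$ (coming from the subdiagonal bimodule $D\Lambda$). Hence any $\bar\Lambda$-linear map $M\to X^\bullet$ must be zero on each vertex component, so the Hom space is zero. For the identification $\Hom_{\bar\Lambda}(M, \tau_\Lambda M) = \Hom_\Lambda(M, \tau_\Lambda M)$, this is immediate since $\mod\,\Lambda$ is a full subcategory of $\mod\,\bar\Lambda$ (both $M$ and $\tau_\Lambda M$ lie in $\mod\,\Lambda$, and any $\bar\Lambda$-linear map between them is automatically $\Lambda$-linear, and vice versa).

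Combining these two observations, the injection at the start of the long exact sequence becomes an isomorphism, yielding $\Hom_{\bar\Lambda}(M, \tau_{\bar\Lambda} M) \cong \Hom_\Lambda(M, \tau_\Lambda M)$ as desired. The only mildly delicate point is the vanishing $\Hom_{\bar\Lambda}(M, X^\bullet) = 0$, but this is essentially a quiver support argument using the structure of $\bar\Lambda$ already exploited in Lemma~\ref{lem:proj}; no further computation is required.
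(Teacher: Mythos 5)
Your proof is correct and follows essentially the same route as the paper: apply $\Hom_{\bar\Lambda}(M,-)$ to the short exact sequence of Lemma~\ref{lem:tau} and observe that $\Hom_{\bar\Lambda}(M,X^\bullet)=0$ because $M$ and $X^\bullet$ have disjoint support (indeed, disjoint support alone suffices, without invoking the direction of the connecting arrows). The identification $\Hom_{\bar\Lambda}(M,\tau_\Lambda M)=\Hom_\Lambda(M,\tau_\Lambda M)$ via fullness of $\mod\Lambda$ in $\mod\bar\Lambda$ is a point the paper leaves implicit, and you state it correctly.
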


\begin{proof}
Apply $\Hom_{\bar\Lambda}(M, - )$ to the short exact sequence in Lemma~\ref{lem:tau} to obtain a long exact sequence
\[0 \to \Hom_{\bar\Lambda}(M, \tau_{\Lambda}M)\to \Hom_{\bar\Lambda}(M, \tau_{\bar\Lambda}M)\to \Hom_{\bar\Lambda}(M, X^{\bullet} ).\]
Observe that the last term is zero because $M\in \text{mod}\,\Lambda$ and $X^\bullet \in \text{mod}\,\Lambda^\bullet$, which means that the two modules have disjoint support.  This shows the desired claim. 
\end{proof}

Similarly, we compare $\tau_{\bar\Lambda}$ of $M$ and $F(M)$.

\begin{lemma}\label{lem:tau2}
Let $M\in \textup{mod}\,\Lambda$. Then there exists a short sequence in $\textup{mod}\,\bar\Lambda$ 
\[ 0 \to \tau_{\bar\Lambda} F(M) \to \tau_{\bar\Lambda} M \to Y^\bullet\to 0\]
such that $Y^\bullet\in \textup{mod}\,\Lambda^\bullet$. 
\end{lemma}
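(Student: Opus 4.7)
The plan is to apply the Nakayama functor $\nu_{\bar\Lambda}$ to the minimal projective presentations of $F(M)$ and $M$ in $\textup{mod}\,\bar\Lambda$ and compare the resulting kernels, analogous to the strategy already used in Lemma~\ref{lem:tau}. The crucial observation from Lemma~\ref{lem:33} is that the two presentations differ only by the projective-injective summand $P_{\bar\Lambda}$ in degree zero: explicitly, $F(M)$ is presented by $0\to P^1_{\Lambda}\xrightarrow{\begin{bsmallmatrix}f\\ g\end{bsmallmatrix}} P^0_{\Lambda}\oplus P_{\bar\Lambda}\to F(M)\to 0$, while $P^1_{\Lambda}\xrightarrow{f}P^0_{\Lambda}\to M\to 0$ is a minimal projective presentation of $M$ in $\textup{mod}\,\bar\Lambda$. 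The latter remains minimal after enlarging the algebra from $\Lambda$ to $\bar\Lambda$ by Lemma~\ref{lem:proj}(a), because $M\in\textup{mod}\,\Lambda$ has its top supported on vertices of $\Lambda$.

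Applying $\nu_{\bar\Lambda}$ and using that $\tau$ is the kernel of the induced map on Nakayama images, I would compute
\[\tau_{\bar\Lambda} F(M)=\ker\!\left(\nu_{\bar\Lambda}(P^1_\Lambda)\xrightarrow{\begin{bsmallmatrix}\nu_{\bar\Lambda} f\\ \nu_{\bar\Lambda} g\end{bsmallmatrix}} \nu_{\bar\Lambda}(P^0_\Lambda)\oplus \nu_{\bar\Lambda}(P_{\bar\Lambda})\right), \qquad \tau_{\bar\Lambda} M=\ker\!\left(\nu_{\bar\Lambda}(P^1_\Lambda)\xrightarrow{\nu_{\bar\Lambda} f} \nu_{\bar\Lambda}(P^0_\Lambda)\right).\]
Consequently $\tau_{\bar\Lambda} F(M) = \tau_{\bar\Lambda} M \cap \ker \nu_{\bar\Lambda}(g)$, so the restriction of $\nu_{\bar\Lambda}(g)$ to $\tau_{\bar\Lambda} M$ yields an injection of $\tau_{\bar\Lambda} F(M)$ into $\tau_{\bar\Lambda} M$ whose cokernel $Y^\bullet$ is, by definition, the image of this restriction inside $\nu_{\bar\Lambda}(P_{\bar\Lambda})$. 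This produces the desired short exact sequence $0\to \tau_{\bar\Lambda} F(M)\to \tau_{\bar\Lambda} M\to Y^\bullet\to 0$.

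It then remains to identify $Y^\bullet$ as a $\Lambda^\bullet$-module. Since $P_{\bar\Lambda}$ is projective-injective, Lemma~\ref{lem:proj}(b) writes it as a direct sum of modules of the form $P_{\bar\Lambda}(i^\bullet)$, and Lemma~\ref{lem:proj}(c) gives $\nu_{\bar\Lambda}(P_{\bar\Lambda}(i^\bullet)) \cong I_{\bar\Lambda}(i^\bullet) \cong I_{\Lambda^\bullet}(i^\bullet)$. Thus $\nu_{\bar\Lambda}(P_{\bar\Lambda})\in \textup{mod}\,\Lambda^\bullet$, and $Y^\bullet$, being a submodule, lies in $\textup{mod}\,\Lambda^\bullet$ as well.

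The anticipated obstacle is mostly organizational rather than substantive: one must be careful that the minimal projective presentation of $M$ over $\Lambda$ is still minimal over $\bar\Lambda$ (immediate from Lemma~\ref{lem:proj}(a)), and that the containment $\tau_{\bar\Lambda} F(M) \subseteq \tau_{\bar\Lambda} M$ together with the identification of the cokernel is handled cleanly. Once these are in place, the argument reduces to elementary kernel arithmetic followed by the Nakayama identification in the final step.
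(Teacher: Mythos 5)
Your proposal is correct and follows essentially the same route as the paper: both apply $\nu_{\bar\Lambda}$ to the two minimal presentations coming from Lemma~\ref{lem:33}, obtain the injection $\tau_{\bar\Lambda}F(M)\hookrightarrow\tau_{\bar\Lambda}M$, and identify the cokernel with a submodule of $\nu_{\bar\Lambda}P_{\bar\Lambda}$, which lies in $\mod\,\Lambda^\bullet$ by Lemma~\ref{lem:proj}. The only difference is bookkeeping: you obtain the cokernel directly as the image of $\nu_{\bar\Lambda}(g)$ restricted to $\tau_{\bar\Lambda}M$ (using $\tau_{\bar\Lambda}F(M)=\tau_{\bar\Lambda}M\cap\ker\nu_{\bar\Lambda}(g)$), whereas the paper reaches the same identification through a larger commutative diagram and the snake lemma.
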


\begin{proof}
Consider the minimal projective presentations of $F(M)$ and $M$ in $\text{mod}\,\bar\Lambda$ as in diagram \eqref{eq:33}, 
where $P_{\bar\Lambda}\in\projinj\,\bar\Lambda$.  

\[
\xymatrix{
0 \ar[r] & P^1_{\Lambda}\ar[r]^{\rho} \ar@{=}[d]& P^0_{\Lambda}\oplus P_{\bar\Lambda}\ar[r]\ar@{->>}[d]^{\begin{bsmallmatrix}1 & 0\end{bsmallmatrix}}& F(M) \ar[r] \ar@{->>}[d]& 0\\
&P^1_{\Lambda}\ar[r]^{\rho'} & P^0_{\Lambda}\ar[r] & M \ar[r] & 0
}
\]

Apply the Nakayama functor $\nu_{\bar\Lambda}$ to get the part of the following commutative diagram with the two exact rows.

\[
\xymatrix{&&&\ker\,h\ar@{^{(}..>}[dd]\ar@{^{(}->}[r]&\nu_{\bar\Lambda} P_{\bar\Lambda}\ar@{^{(}->}[d]\\
0 \ar[r] & \tau_{\bar\Lambda} F(M) \ar[r]^{i}\ar@{^{(}..>}[dd]^j &\nu_{\bar\Lambda} P^1_{\Lambda}\ar[rr]^(.3){\nu_{\bar\Lambda}\rho} \ar@{=}[dd]\ar@{->>}[dr]&& \nu_{\bar\Lambda}P^0_{\Lambda}\oplus \nu_{\bar\Lambda} P_{\bar\Lambda}\ar@{->>}[dd]^{\nu_{\bar\Lambda} \begin{bsmallmatrix}1 & 0\end{bsmallmatrix}}\\
&&&\coker \,i \ar@{^{(}->}[ur]\ar@{..>>}[dd]^(.3){h}\\
0\ar[r] & \tau_{\bar\Lambda} M\ar[r]^{i'} &\nu_{\bar\Lambda} P^1_{\Lambda}\ar[rr]^(.3){\nu_{\bar\Lambda}\rho'} \ar@{->>}[dr]&& \nu_{\bar\Lambda} P^0_{\Lambda}\\
&&&\coker \,i' \ar@{^{(}->}[ur]\\
}
\]

The map $j: \tau_{\bar\Lambda} F(M)\to \tau_{\bar\Lambda} M$ exists and moreover is injective by the universal property of the kernel of $\nu_{\bar\Lambda} \rho'$.  Now, observe that the maps $\nu_{\bar\Lambda}\rho, \nu_{\bar\Lambda}\rho'$ factor through the cokernels of $i, i'$ respectively.  Moreover, by the universal property of the cokernel of $i$ there exists a map $h: \coker\, i\to \coker\,i'$ such that the corresponding squares involving $h$ commute.  In particular, $h$ is surjective.  Finally, by the snake lemma $\coker\, j \cong \ker\, h$.  Hence it suffices to show that $\ker\, h\in \text{mod}\,\Lambda^\bullet$, and the lemma follows by setting $Y^\bullet = \ker\, h$.

By the universal property of $\ker\, (\nu_{\bar\Lambda} \begin{bsmallmatrix}1 & 0\end{bsmallmatrix})$, we observe that $\ker\, h$ embeds into $\ker\, (\nu_{\bar\Lambda} \begin{bsmallmatrix}1 & 0\end{bsmallmatrix})\cong \nu_{\bar\Lambda} P_{\bar\Lambda}$. Since $P_{\bar\Lambda}\in\projinj\bar\Lambda$, then by Lemma~\ref{lem:proj} parts (a) and (b), it is a projective module whose top is in $\text{mod}\,\Lambda^\bullet$.  Then the associated injective module $\nu_{\bar\Lambda} P_{\bar\Lambda}$ is an injective $\Lambda^\bullet$-module by Lemma~\ref{lem:proj}(c), so in particular $\nu_{\bar\Lambda} P_{\bar\Lambda}\in \text{mod}\,\Lambda^\bullet$.  This implies the desired conclusion that $\ker\, h\in \text{mod}\,\Lambda^\bullet$.
\end{proof}

The next two results show that $F$ induces a bijection between $\tau$-rigid $\Lambda$-modules and rigid $\bar\Lambda$-modules. 

\begin{prop}\label{prop:rigid}
If $F(M)$ is rigid in $\textup{mod}\,\bar\Lambda$ then $M$ is $\tau_{\Lambda}$-rigid. 
\end{prop}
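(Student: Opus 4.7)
The plan is to reduce the statement $\Hom_\Lambda(M,\tau_\Lambda M)=0$ through a sequence of three identifications, each handled by a lemma already proved in the excerpt, until we land on a vanishing that follows from the rigidity hypothesis via the Auslander--Reiten formula.

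First I would invoke Lemma~\ref{lem:tau1}, which gives the identification $\Hom_\Lambda(M,\tau_\Lambda M)\cong \Hom_{\bar\Lambda}(M,\tau_{\bar\Lambda}M)$. So it suffices to prove vanishing of the right-hand side. Next I would apply $\Hom_{\bar\Lambda}(M,-)$ to the short exact sequence $0\to \tau_{\bar\Lambda}F(M)\to \tau_{\bar\Lambda}M\to Y^\bullet\to 0$ furnished by Lemma~\ref{lem:tau2}. Since $M\in\textup{mod}\,\Lambda$ and $Y^\bullet\in\textup{mod}\,\Lambda^\bullet$ have disjoint support, the term $\Hom_{\bar\Lambda}(M,Y^\bullet)$ vanishes, yielding an inclusion $\Hom_{\bar\Lambda}(M,\tau_{\bar\Lambda}M)\hookrightarrow \Hom_{\bar\Lambda}(M,\tau_{\bar\Lambda}F(M))$, so it is enough to kill this last group.

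For that I would use the short exact sequence $0\to \ker\pi_M\to F(M)\xrightarrow{\pi_M} M\to 0$ from diagram~\eqref{eq:33} and apply $\Hom_{\bar\Lambda}(-,\tau_{\bar\Lambda}F(M))$. This produces an injection $\Hom_{\bar\Lambda}(M,\tau_{\bar\Lambda}F(M))\hookrightarrow \Hom_{\bar\Lambda}(F(M),\tau_{\bar\Lambda}F(M))$. Now the hypothesis enters: $F(M)$ is rigid and has projective dimension at most one by Proposition~\ref{prop:F}(b), so the Auslander--Reiten formula $\Ext^1_{\bar\Lambda}(X,Y)\cong D\overline{\Hom}_{\bar\Lambda}(Y,\tau_{\bar\Lambda}X)$ reduces, for $X=Y=F(M)$ of projective dimension $\leq 1$, to the equivalence of $\Ext^1_{\bar\Lambda}(F(M),F(M))=0$ and $\Hom_{\bar\Lambda}(F(M),\tau_{\bar\Lambda}F(M))=0$. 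Chaining the three steps then forces $\Hom_\Lambda(M,\tau_\Lambda M)=0$, as required.

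The only delicate point I anticipate is the direction of the arrows in the middle step, namely making sure that applying $\Hom_{\bar\Lambda}(M,-)$ to the sequence of Lemma~\ref{lem:tau2} produces an \emph{inclusion} of $\Hom_{\bar\Lambda}(M,\tau_{\bar\Lambda}M)$ into $\Hom_{\bar\Lambda}(M,\tau_{\bar\Lambda}F(M))$ rather than the reverse; this is where the disjoint-support vanishing of $\Hom_{\bar\Lambda}(M,Y^\bullet)$ is crucial. Apart from that, the argument is a formal chase using only the three preceding lemmas and the AR formula, and no new diagrams need to be drawn.
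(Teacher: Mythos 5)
Your proposal is correct and follows essentially the same route as the paper's proof: reduce via Lemma~\ref{lem:tau1}, use the sequence of Lemma~\ref{lem:tau2} with the disjoint-support vanishing $\Hom_{\bar\Lambda}(M,Y^\bullet)=0$ to factor through $\tau_{\bar\Lambda}F(M)$, then use surjectivity of $\pi_M$ and the Auslander--Reiten formula (valid since $\pd_{\bar\Lambda}F(M)\leq 1$) together with rigidity of $F(M)$. The paper phrases the middle two steps as an element-wise chase (a map $f$ factoring as $jq$ with $q\pi_M=0$) rather than as exact Hom-sequences, but the argument is the same.
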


\begin{proof}
Let $F(M)\in \text{mod}\,\bar\Lambda$ be rigid.  By Lemma~\ref{lem:tau1} it suffices to show that $\Hom_{\bar\Lambda}(M, \tau_{\bar\Lambda}M)=0$. 

Since the projective dimension of $F(M)$ is at most one by Proposition~\ref{prop:F}(b), then the Auslander Reiten formula implies the following. 
\[0 = \Ext^1_{\bar\Lambda}(F(M), F(M)) \cong D\Hom_{\bar\Lambda}(F(M), \tau_{\bar\Lambda} F(M))\]

Now, consider the sequence in the statement of Lemma~\ref{lem:tau2}
\[ 0 \to \tau_{\bar\Lambda} F(M) \xrightarrow{j} \tau_{\bar\Lambda} M \xrightarrow{h} Y^\bullet\to 0\]
Let $f\in \Hom_{\bar\Lambda}(M, \tau_{\bar\Lambda}M)$.  We want to show that $f=0$. Since $Y^\bullet\in \text{mod}\,\Lambda^\bullet$ and $M\in \text{mod}\,\Lambda$, then $hf=0$.  Then $f$ factors through the kernel of $h$, so in particular there exists a map $q: M \to \tau_{\bar\Lambda} F(M)$ such that $jq=f$.  
On the other hand, we have the projection $\pi_M: F(M)\to M$.  The composition $q \pi_M\in \Hom_{\bar\Lambda}(F(M), \tau_{\bar\Lambda} F(M))$ which is zero by above.  Then $q=0$ because $\pi_M$ is surjective, which implies that $f=jq=0$.  This shows the desired claim that $\Hom_{\bar\Lambda}(M, \tau_{\bar\Lambda}M)=0$. 
\end{proof}

We now show the converse of the previous statement. 

\begin{prop}\label{prop:rigid2}
If $M$ is $\tau_\Lambda$-rigid then $F(M)$ is rigid in $\textup{mod}\,\bar\Lambda$. 
\end{prop}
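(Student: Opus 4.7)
The plan is to reduce, via the Auslander--Reiten formula and the two short exact sequences built into diagram~\eqref{eq:33}, to two vanishing statements that follow from the results already established. Since $\pd_{\bar\Lambda} F(M) \le 1$ by Proposition~\ref{prop:F}(b), the Auslander--Reiten formula gives $\Ext^1_{\bar\Lambda}(F(M), N) \cong D\Hom_{\bar\Lambda}(N, \tau_{\bar\Lambda} F(M))$ for any $N \in \mod\,\bar\Lambda$; taking $N = F(M)$, it suffices to show $\Hom_{\bar\Lambda}(F(M), \tau_{\bar\Lambda} F(M)) = 0$.

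Applying $\Hom_{\bar\Lambda}(-, \tau_{\bar\Lambda} F(M))$ to the sequence $0 \to \ker \pi_M \to F(M) \to M \to 0$ from Lemma~\ref{lem:33} yields
\[0 \to \Hom_{\bar\Lambda}(M, \tau_{\bar\Lambda} F(M)) \to \Hom_{\bar\Lambda}(F(M), \tau_{\bar\Lambda} F(M)) \to \Hom_{\bar\Lambda}(\ker \pi_M, \tau_{\bar\Lambda} F(M)),\]
so it is enough to show both outer terms vanish. For the first, compose with the inclusion $\tau_{\bar\Lambda} F(M) \hookrightarrow \tau_{\bar\Lambda} M$ from Lemma~\ref{lem:tau2} and apply Lemma~\ref{lem:tau1} together with the hypothesis:
\[\Hom_{\bar\Lambda}(M, \tau_{\bar\Lambda} F(M)) \hookrightarrow \Hom_{\bar\Lambda}(M, \tau_{\bar\Lambda} M) \cong \Hom_\Lambda(M, \tau_\Lambda M) = 0.\]

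The third term is the crux of the argument. By Auslander--Reiten duality again it is dual to $\Ext^1_{\bar\Lambda}(F(M), \ker \pi_M)$, so I would apply $\Hom_{\bar\Lambda}(F(M), -)$ to the short exact sequence $0 \to \Omega^2_\Lambda M \to P_{\bar\Lambda} \to \ker \pi_M \to 0$ in the top row of diagram~\eqref{eq:33}. The relevant piece of the long exact sequence is
\[\Ext^1_{\bar\Lambda}(F(M), P_{\bar\Lambda}) \to \Ext^1_{\bar\Lambda}(F(M), \ker \pi_M) \to \Ext^2_{\bar\Lambda}(F(M), \Omega^2_\Lambda M).\]
The left term vanishes because $P_{\bar\Lambda}$ is projective-injective, in particular injective, by Lemma~\ref{lem:proj}(b), while the right term vanishes because $\pd_{\bar\Lambda} F(M) \le 1$; thus $\Ext^1_{\bar\Lambda}(F(M), \ker \pi_M) = 0$ and hence $\Hom_{\bar\Lambda}(\ker \pi_M, \tau_{\bar\Lambda} F(M)) = 0$.

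The main obstacle is recognizing that $\ker \pi_M \cong \Omega^{-1}_{\bar\Lambda} \Omega^2_\Lambda M$ is engineered, via the injective envelope into a projective-injective module, precisely so that the Ext vanishing for the third term holds \emph{for free}: no hypothesis on $M$ beyond $\pd_{\bar\Lambda} F(M) \le 1$ is needed. The $\tau_\Lambda$-rigidity of $M$ only enters through the vanishing of the first term, via Lemmas~\ref{lem:tau1} and~\ref{lem:tau2}, which is precisely the asymmetry one would expect when passing from Proposition~\ref{prop:rigid} to its converse.
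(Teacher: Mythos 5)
Your proof is correct and is essentially the paper's argument in dualized form: applying the Auslander--Reiten formula (valid since $\pd_{\bar\Lambda}F(M)\le 1$) at the outset converts the problem to the vanishing of $\Hom_{\bar\Lambda}(F(M),\tau_{\bar\Lambda}F(M))$, but the two vanishing statements you then reduce to --- $\Hom_{\bar\Lambda}(M,\tau_{\bar\Lambda}F(M))=0$ via Lemmas~\ref{lem:tau1} and~\ref{lem:tau2}, and $\Ext^1_{\bar\Lambda}(F(M),\ker\pi_M)=0$ via the sequence $0\to\Omega^2_\Lambda M\to P_{\bar\Lambda}\to\ker\pi_M\to 0$ --- are exactly the ones the paper establishes, using the same sequences from diagram~\eqref{eq:33}. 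The paper merely keeps everything on the $\Ext$ side, splitting $\Ext^1_{\bar\Lambda}(F(M),F(M))$ along $0\to\ker\pi_M\to F(M)\to M\to 0$ in the second variable, so the two write-ups coincide up to Auslander--Reiten duality.
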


\begin{proof}
Let $M\in\text{mod}\,\Lambda$ be $\tau_{\Lambda}$-rigid, and apply $\Hom_{\bar\Lambda}(M, -)$ to the short exact sequence in Lemma~\ref{lem:tau2} to obtain the following. 
\[0\to \Hom_{\bar\Lambda}(M, \tau_{\bar\Lambda}F(M))\to \Hom_{\bar\Lambda}(M, \tau_{\bar\Lambda} M)\]
By Lemma~\ref{lem:tau1} the last term is isomorphic to $\Hom_{\bar\Lambda}(M, \tau_{\Lambda} M)$ which is zero by assumption.   This implies that 
\[0=\Hom_{\bar\Lambda}(M, \tau_{\bar\Lambda}F(M))\cong \Ext^1_{\bar\Lambda}(F(M), M)\]
where the isomorphism comes from applying the Auslander Reiten formula and noting that $\pd_{\bar\Lambda} F(M)\leq 1$ by Proposition~\ref{prop:F}(b).

Now consider the following short exact sequence 
\[0\to \Omega^2_{\Lambda} M \to P_{\bar\Lambda} \to \ker\,\pi \to 0\]
which comes from diagram \eqref{eq:33}.  Note that here $P_{\bar\Lambda}\in\projinj\,\bar\Lambda$.  Applying $\Hom_{\bar\Lambda}(F(M), - )$ to this sequence yields the following. 
\[\Ext^1_{\bar\Lambda}(F(M), P_{\bar\Lambda}) \to \Ext^1_{\bar\Lambda}(F(M), \ker\,\pi)\to \Ext^2_{\bar\Lambda}(F(M), \Omega^2_{\Lambda} M)  \]
The first term is zero because $P_{\bar\Lambda}$ is projective-injective, and the last term is zero because $\pd_{\bar\Lambda} F(M)\leq 1$.   Thus, we obtain that $\Ext^1_{\bar\Lambda}(F(M), \ker\,\pi)=0$. 

Lastly consider the short exact sequence 
\[0 \to \ker\,\pi \to F(M)\to M \to 0\]
and apply $\Hom_{\bar\Lambda}(F(M), - )$.  Thus, we obtain 
\[\Ext^1_{\bar\Lambda}(F(M), \ker\,\pi) \to \Ext^1_{\bar\Lambda}(F(M), F(M))\to \Ext^1_{\bar\Lambda}(F(M),M).\]
The first and the last terms vanish by the computations above, thus we obtain the desired result that $\Ext^1_{\bar\Lambda}(F(M), F(M))=0$.
\end{proof}

We combine the above results to obtain the following bijection. 

\begin{thm}\label{thm:rigid_bij}
The map $F$ restricts to a bijection between $\tau_{\Lambda}$-rigid modules in $\textup{mod}\,\Lambda$ and rigid $\bar\Lambda$-modules in $\mathcal{H}(\bar\Lambda)$. 
\end{thm}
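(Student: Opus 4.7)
The plan is to deduce this theorem as an immediate assembly of three previously established results: the bijection of Theorem~\ref{thm:bij}, and the two directions of the rigidity correspondence given by Propositions~\ref{prop:rigid} and~\ref{prop:rigid2}. Since $F:\textup{mod}\,\Lambda\to\mathcal{H}(\bar\Lambda)$ is already known to be a bijection, all that remains is to verify that this bijection restricts correctly to the two subsets in question.

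First I would show that for a $\tau_{\Lambda}$-rigid module $M$, the image $F(M)$ is a rigid module lying in $\mathcal{H}(\bar\Lambda)$. The rigidity of $F(M)$ in $\textup{mod}\,\bar\Lambda$ is precisely the content of Proposition~\ref{prop:rigid2}, while membership in $\mathcal{H}(\bar\Lambda)$ is already guaranteed by Lemma~\ref{lem:image}. So the restriction of $F$ to $\tau_{\Lambda}$-rigid modules is a well-defined map into the rigid modules of $\mathcal{H}(\bar\Lambda)$.

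For the converse direction, given any rigid $N\in\mathcal{H}(\bar\Lambda)$, the surjectivity of $F$ established in Lemma~\ref{lem:surj} produces a module $M\in\textup{mod}\,\Lambda$ with $F(M)\cong N$, and Proposition~\ref{prop:rigid} then ensures that $M$ must itself be $\tau_{\Lambda}$-rigid. Injectivity of the restricted map is inherited directly from Lemma~\ref{lem:inj}. There is no real obstacle: all of the substantive work has been done in the preceding lemmas and propositions, and this theorem functions as a clean packaging of those results into a single bijective correspondence between $\tau_{\Lambda}$-rigid $\Lambda$-modules and rigid $\bar\Lambda$-modules in $\mathcal{H}(\bar\Lambda)$.
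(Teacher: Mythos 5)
Your proposal is correct and matches the paper's own argument: the paper likewise cites Theorem~\ref{thm:bij} for the bijection $F:\textup{mod}\,\Lambda\to\mathcal{H}(\bar\Lambda)$ and then invokes Propositions~\ref{prop:rigid} and~\ref{prop:rigid2} for the two directions of the rigidity correspondence. Your extra appeals to Lemmas~\ref{lem:image}, \ref{lem:surj}, and \ref{lem:inj} simply unpack what Theorem~\ref{thm:bij} already packages, so this is the same proof.
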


\begin{proof}
The map $F$ is a bijection between $\text{mod}\,\Lambda$ and $\mathcal{H}(\bar\Lambda)$ by Theorem~\ref{thm:bij}.  Now the statement follows from Propositions~\ref{prop:rigid} and \ref{prop:rigid2}.
\end{proof}

\subsection{The map $\bar F$}
We extend $F$ to a map $\bar F$ between support $\tau$-tilting pairs in $\text{mod}\,\Lambda$  and tilting modules in $\text{mod}\,\bar\Lambda$.   We begin with the following definition. 

\begin{definition}\label{def:f}
Let $(M, P_{\Lambda})$ be a support $\tau$-tilting pair in $\text{mod}\,\Lambda$, and define 
\[\bar F(M, P_{\Lambda}) = F(M)\oplus \Omega^{-1}_{\bar\Lambda} P_{\Lambda}\oplus  Q_{\bar\Lambda}\] 
where $Q_{\bar\Lambda}$ is the direct sum of all indecomposable projective-injective $\bar\Lambda$-modules. 
\end{definition}

First we show that $\bar F$ maps projective $\Lambda$-modules in the second component of a $\tau$-rigid pair to rigid $\bar\Lambda$-modules. 

\begin{lemma}\label{lem:omega}
The module $\Omega^{-1}_{\bar\Lambda} \Lambda\in \textup{mod}\,\bar\Lambda$ is rigid and has projective dimension one. 
\end{lemma}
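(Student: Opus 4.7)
The plan is to produce a projective resolution of $\Omega^{-1}_{\bar\Lambda}\Lambda$ of length one by a direct construction, and then to derive rigidity from it via a short $\Ext$-calculation using the same resolution.

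First I would note, via Lemma~\ref{lem:proj}(a), that $\Lambda$ is itself a projective $\bar\Lambda$-module, and its socle (as a $\bar\Lambda$-module) is supported on the $\Lambda$-side vertices. Lemma~\ref{lem:proj}(b) then identifies the injective envelope in $\textup{mod}\,\bar\Lambda$ of each simple $S_\Lambda(i)$ with $P_{\bar\Lambda}(i^\bullet)$. Consequently the injective envelope $I$ of $\Lambda$ in $\textup{mod}\,\bar\Lambda$ is a direct sum of modules of the form $P_{\bar\Lambda}(i^\bullet)$, and so is projective-injective. The defining short exact sequence
\[0 \to \Lambda \to I \to \Omega^{-1}_{\bar\Lambda}\Lambda \to 0\]
then has both of its first two terms projective, hence is a projective resolution, immediately giving $\pd_{\bar\Lambda}\Omega^{-1}_{\bar\Lambda}\Lambda \leq 1$. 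To rule out projective dimension zero (when $\Lambda \neq 0$), I would observe that the tops of $\Lambda$ and $I$ are supported on disjoint vertex sets (the $\Lambda$-side versus the $\Lambda^\bullet$-side), so $\Lambda$ cannot be a direct summand of $I$, the sequence cannot split, and $\Omega^{-1}_{\bar\Lambda}\Lambda$ is not projective.

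For rigidity, I would apply $\Hom_{\bar\Lambda}(\Omega^{-1}_{\bar\Lambda}\Lambda, -)$ to the same short exact sequence and read off the portion
\[\Ext^1_{\bar\Lambda}(\Omega^{-1}_{\bar\Lambda}\Lambda, I) \to \Ext^1_{\bar\Lambda}(\Omega^{-1}_{\bar\Lambda}\Lambda, \Omega^{-1}_{\bar\Lambda}\Lambda) \to \Ext^2_{\bar\Lambda}(\Omega^{-1}_{\bar\Lambda}\Lambda, \Lambda).\]
The left term vanishes because $I$ is injective, and the right term vanishes because $\pd_{\bar\Lambda}\Omega^{-1}_{\bar\Lambda}\Lambda \leq 1$, forcing the middle term to vanish as well.

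There is no serious obstacle here: the entire argument rests on the single structural observation that the injective envelope of any $\Lambda$-summand inside $\textup{mod}\,\bar\Lambda$ is projective-injective. Both halves of the lemma then drop out of the same short exact sequence, one by treating it as a projective resolution and the other by treating it as an injective copresentation.
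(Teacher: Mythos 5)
Your proof is correct and takes essentially the same approach as the paper: both rest on the short exact sequence $0\to\Lambda\to I_{\bar\Lambda}\to\Omega^{-1}_{\bar\Lambda}\Lambda\to0$ with $I_{\bar\Lambda}$ projective-injective by Lemma~\ref{lem:proj}(b), which gives the projective dimension bound and a short $\Ext$ computation for rigidity. The only (harmless) deviations are that you apply $\Hom_{\bar\Lambda}(\Omega^{-1}_{\bar\Lambda}\Lambda,-)$ and use vanishing of $\Ext^2_{\bar\Lambda}(\Omega^{-1}_{\bar\Lambda}\Lambda,\Lambda)$ where the paper applies $\Hom_{\bar\Lambda}(-,\Omega^{-1}_{\bar\Lambda}\Lambda)$ and argues surjectivity of the restriction map, and that you additionally spell out the non-splitting argument pinning the projective dimension at exactly one.
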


\begin{proof}
Let $I_{\bar\Lambda}$ be the injective envelope of $\Lambda$ in $\bar\Lambda$, then consider the following sequence in $\text{mod}\,\bar\Lambda$. 
\[0\to \Lambda \to I_{\bar\Lambda} \to \Omega^{-1}_{\bar\Lambda}\Lambda \to 0\]
Note that $I_{\bar\Lambda}$ is also projective by Lemma~\ref{lem:proj}(b), so in particular $\pd_{\bar\Lambda}  \Omega^{-1}_{\bar\Lambda}\Lambda =1$.  Next we show that $\Omega^{-1}_{\bar\Lambda} \Lambda$ is rigid.  Applying $\Hom_{\bar\Lambda}( -, \Omega^{-1}_{\bar\Lambda}\Lambda)$ to this sequence we obtain 
\[ \Hom_{\bar\Lambda}( I_{\bar\Lambda}, \Omega^{-1}_{\bar\Lambda}\Lambda)\to \Hom_{\bar\Lambda}( \Lambda, \Omega^{-1}_{\bar\Lambda}\Lambda)\to \Ext^1_{\bar\Lambda}( \Omega^{-1}_{\bar\Lambda}\Lambda, \Omega^{-1}_{\bar\Lambda}\Lambda)\to \Ext^1_{\bar\Lambda}( I_{\bar\Lambda}, \Omega^{-1}_{\bar\Lambda}\Lambda)\]
where the last term is zero because $I_{\bar\Lambda}\in\projinj\,\bar\Lambda$.  Also note that the first map is surjective, because $\Lambda\in \proj\,\bar\Lambda$ and $I_{\bar\Lambda}$ is a projective cover of $\Omega^{-1}_{\bar\Lambda} \Lambda$.  This implies that $\Ext^1_{\bar\Lambda}( \Omega^{-1}_{\bar\Lambda}\Lambda, \Omega^{-1}_{\bar\Lambda}\Lambda)=0$, so $\Omega^{-1}_{\bar\Lambda}\Lambda$ is rigid. 
\end{proof}

The next two lemmas show that the condition that $\Hom_\Lambda(P_\Lambda, M)=0$ is equivalent to the condition that $F(M)\oplus \Omega^{-1}_{\bar\Lambda} P_\Lambda$ is rigid.  

\begin{lemma}\label{lem:rigid4}
Let $M\in \textup{mod}\,\Lambda$ and $P_{\Lambda}\in\proj\,\Lambda$.  If $\Hom_{\Lambda}(P_\Lambda, M)=0$ then 
\begin{itemize}
\item[(a)] $\Ext^1_{\bar\Lambda}(F(M), \Omega^{-1}_{\bar\Lambda} P_{\Lambda})=0$ and 
\item[(b)] $\Ext^1_{\bar\Lambda}(\Omega^{-1}_{\bar\Lambda} P_{\Lambda}, F(M))=0$.
\end{itemize}
\end{lemma}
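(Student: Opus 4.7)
The plan is to start from the short exact sequence
\[0 \to P_\Lambda \to I_{\bar\Lambda} \to \Omega^{-1}_{\bar\Lambda} P_\Lambda \to 0\]
appearing in the proof of Lemma~\ref{lem:omega}, where $I_{\bar\Lambda}$ is the injective envelope of $P_\Lambda$ in $\textup{mod}\,\bar\Lambda$. By Lemma~\ref{lem:proj}(b), $I_{\bar\Lambda}$ is projective-injective, and this will be the main structural input. I will handle the two parts by applying the contravariant and covariant $\Hom$-functors to this sequence in turn.

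For part (a), applying $\Hom_{\bar\Lambda}(F(M),-)$ yields the segment
\[\Ext^1_{\bar\Lambda}(F(M),I_{\bar\Lambda})\to \Ext^1_{\bar\Lambda}(F(M),\Omega^{-1}_{\bar\Lambda}P_\Lambda)\to \Ext^2_{\bar\Lambda}(F(M),P_\Lambda).\]
The left term vanishes because $I_{\bar\Lambda}$ is injective, and the right term vanishes because $\pd_{\bar\Lambda}F(M)\le 1$ by Proposition~\ref{prop:F}(b). This gives (a) immediately; in particular, this part does not need the hypothesis $\Hom_\Lambda(P_\Lambda,M)=0$.

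For part (b), I will apply $\Hom_{\bar\Lambda}(-,F(M))$ to the same sequence. Since $I_{\bar\Lambda}$ is projective, $\Ext^1_{\bar\Lambda}(I_{\bar\Lambda},F(M))=0$, so proving (b) reduces to showing that the restriction map
\[\Hom_{\bar\Lambda}(I_{\bar\Lambda},F(M))\to \Hom_{\bar\Lambda}(P_\Lambda,F(M))\]
is surjective. Given $\phi:P_\Lambda\to F(M)$, I would first use the projection $\pi_M:F(M)\to M$ from diagram~\eqref{eq:33} together with the hypothesis $\Hom_\Lambda(P_\Lambda,M)=0$ to conclude $\pi_M\phi=0$, so $\phi$ factors through the inclusion $\ker\pi_M\hookrightarrow F(M)$. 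Next, the top row of \eqref{eq:33} exhibits $\ker\pi_M$ as a quotient of the projective-injective $P_{\bar\Lambda}$; using projectivity of $P_\Lambda$, I would lift the factored map to some $\tilde\phi:P_\Lambda\to P_{\bar\Lambda}$, and then using injectivity of $P_{\bar\Lambda}$ I would extend $\tilde\phi$ along $P_\Lambda\hookrightarrow I_{\bar\Lambda}$ to a map $\bar\phi:I_{\bar\Lambda}\to P_{\bar\Lambda}$. Composing $\bar\phi$ with the canonical map $P_{\bar\Lambda}\to F(M)$ (inclusion of the second summand from the second row of \eqref{eq:33}) produces an extension of $\phi$ to $I_{\bar\Lambda}$. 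Verifying that the composition indeed restricts to $\phi$ on $P_\Lambda$ uses the commutativity in \eqref{eq:33}, which identifies the image of $P_{\bar\Lambda}\to F(M)$ with the submodule $\ker\pi_M$.

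The main obstacle is assembling the lift-then-extend chain in part (b). What makes it work is the dual role of the module $P_{\bar\Lambda}$ in diagram~\eqref{eq:33}: it surjects onto $\ker\pi_M$ (allowing the lift thanks to projectivity of $P_\Lambda$) while at the same time being injective in $\textup{mod}\,\bar\Lambda$ (allowing the extension along $P_\Lambda\hookrightarrow I_{\bar\Lambda}$). This symmetric interplay between projectivity and injectivity is precisely what Lemma~\ref{lem:33} was designed to encode, and it is what converts the hypothesis on $\Hom_\Lambda(P_\Lambda,M)$ into the rigidity statement (b).
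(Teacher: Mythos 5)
Your proof is correct, and part (a) coincides with the paper's argument (the paper phrases it as the dimension shift $\Ext^1_{\bar\Lambda}(F(M),\Omega^{-1}_{\bar\Lambda}P_\Lambda)\cong\Ext^2_{\bar\Lambda}(F(M),P_\Lambda)$, which is the same long exact sequence you write out; and indeed the hypothesis is not used there). For part (b) you use the same raw ingredients as the paper --- the sequence $0\to P_\Lambda\to I_{\bar\Lambda}\to\Omega^{-1}_{\bar\Lambda}P_\Lambda\to 0$, the top row of diagram \eqref{eq:33}, and the lift-then-extend trick exploiting that $P_\Lambda$ is projective while $P_{\bar\Lambda}$ and $I_{\bar\Lambda}$ are injective --- but you package them more directly. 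The paper proceeds in two stages: it first shows $\Ext^1_{\bar\Lambda}(\Omega^{-1}_{\bar\Lambda}P_\Lambda,M)=0$ from the hypothesis, uses the sequence $0\to\ker\pi_M\to F(M)\to M\to 0$ (together with $\Hom_{\bar\Lambda}(\Omega^{-1}_{\bar\Lambda}P_\Lambda,M)=0$, argued via supports) to reduce to $\Ext^1_{\bar\Lambda}(\Omega^{-1}_{\bar\Lambda}P_\Lambda,\ker\pi_M)=0$, and only then runs the surjectivity (lift-and-extend) argument for maps into $\ker\pi_M$. You instead apply $\Hom_{\bar\Lambda}(-,F(M))$ once, and prove surjectivity of the restriction $\Hom_{\bar\Lambda}(I_{\bar\Lambda},F(M))\to\Hom_{\bar\Lambda}(P_\Lambda,F(M))$ directly, using the hypothesis at the Hom level (fullness of $\mod\Lambda$ in $\mod\bar\Lambda$ gives $\pi_M\phi=0$, so $\phi$ factors through $\ker\pi_M$) before lifting through $P_{\bar\Lambda}\twoheadrightarrow\ker\pi_M$ and extending along $P_\Lambda\hookrightarrow I_{\bar\Lambda}$. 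This buys a shorter argument that avoids the intermediate vanishing statements; the paper's version isolates the reusable facts $\Ext^1_{\bar\Lambda}(\Omega^{-1}_{\bar\Lambda}P_\Lambda,M)=0$ and $\Ext^1_{\bar\Lambda}(\Omega^{-1}_{\bar\Lambda}P_\Lambda,\ker\pi_M)=0$ along the way. One cosmetic imprecision: the map $P_{\bar\Lambda}\to F(M)$ you use is not an ``inclusion'' of a summand but the restriction of the projective cover $P^0_\Lambda\oplus P_{\bar\Lambda}\twoheadrightarrow F(M)$; what your verification actually needs, and what \eqref{eq:33} provides, is that this composite equals the surjection $P_{\bar\Lambda}\twoheadrightarrow\ker\pi_M$ followed by the inclusion $\ker\pi_M\hookrightarrow F(M)$, which is exactly the commutativity of the top-right square, so the argument stands.
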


\begin{proof}
Part (a) follows since $\Ext^1_{\bar\Lambda}(F(M), \Omega^{-1}_{\bar\Lambda} P_{\Lambda})\cong \Ext^2_{\bar\Lambda}(F(M), P_{\Lambda})$ which is zero since $\pd_{\bar\Lambda}F(M)\leq 1$. 

To show part (b) consider the following short exact sequence 
\begin{equation}\label{eq:5}
0\to P_{\Lambda}\xrightarrow{i} I_{\bar\Lambda}\to \Omega^{-1}_{\bar\Lambda} P_{\Lambda}\to 0
\end{equation}
in $\text{mod}\,\bar\Lambda$ where  $I_{\bar\Lambda}$ is the injective cover of $P_{\Lambda}$.  Note that $I_{\bar\Lambda}$ is a projective-injective.  Applying $\Hom_{\bar\Lambda}(-, M)$ to this sequence we obtain 
\[\Hom_{\bar\Lambda}(P_{\Lambda}, M)\to \Ext^1_{\bar\Lambda}(\Omega^{-1}_{\bar\Lambda} P_{\Lambda}, M)\to \Ext^1_{\bar\Lambda}(I_{\bar\Lambda}, M).\]
Observe that the first term is zero by assumption and the last term is zero because $I_{\bar\Lambda} \in \projinj\,\bar\Lambda$.  Hence, we conclude that $\Ext^1_{\bar\Lambda}(\Omega^{-1}_{\bar\Lambda} P_{\Lambda}, M)=0$. 

Next consider the short exact sequence 
\[0 \to \ker\,\pi \to F(M)\to M \to 0\]
and apply $\Hom_{\bar\Lambda}(\Omega^{-1}_{\bar\Lambda} P_{\Lambda}, - )$ to obtain the following. 
\[\Hom_{\bar\Lambda}(\Omega^{-1}_{\bar\Lambda} P_{\Lambda}, M)\to \Ext^1_{\bar\Lambda}(\Omega^{-1}_{\bar\Lambda} P_{\Lambda}, \ker\,\pi )\to \Ext^1_{\bar\Lambda}(\Omega^{-1}_{\bar\Lambda} P_{\Lambda}, F(M))\to \Ext^1_{\bar\Lambda}(\Omega^{-1}_{\bar\Lambda} P_{\Lambda}, M )\]
Note that the first term is zero because $\tp \,\Omega^{-1}_{\bar\Lambda} P_{\Lambda}\in \text{mod}\,\Lambda^\bullet$ and the last term is zero by the calculation above.  Hence we conclude that 
\[\Ext^1_{\bar\Lambda}(\Omega^{-1}_{\bar\Lambda} P_{\Lambda}, \ker\,\pi )\cong \Ext^1_{\bar\Lambda}(\Omega^{-1}_{\bar\Lambda} P_{\Lambda}, F(M))\]
so it remains to show that $\Ext^1_{\bar\Lambda}(\Omega^{-1}_{\bar\Lambda} P_{\Lambda}, \ker\,\pi )=0$.  

Apply $\Hom_{\bar\Lambda}(-, \ker\,\pi)$ to the sequence \eqref{eq:5} to obtain
\[\Hom_{\bar\Lambda}(I_{\bar\Lambda}, \ker\,\pi)\xrightarrow{i^*} \Hom_{\bar\Lambda}(P_\Lambda, \ker\,\pi)\to \Ext^1_{\bar\Lambda}(\Omega^{-1}_{\bar\Lambda} P_{\Lambda}, \ker\,\pi)\to \Ext^1_{\bar\Lambda}(I_{\bar\Lambda}, \ker\,\pi).\]
The last term is zero since $I_{\bar\Lambda} \in \projinj\,\bar\Lambda$.  Hence, to obtain the desired conclusion we need to show that the map $i^*$ is surjective.  Let $f\in  \Hom_{\bar\Lambda}(P_\Lambda, \ker\,\pi)$, and recall from diagram~\ref{eq:33} that there exists a short exact sequence 
\[0\to \Omega^2_{\Lambda} M \to P'_{\bar\Lambda}\xrightarrow{g} \ker\,\pi\to 0\]
where $P'_{\bar\Lambda}\in \projinj\,\bar\Lambda$.  In particular, $g$ is a projective cover of $\ker\,\pi$, so $f$ factors through $g$.  Hence, there exists a map $f': P_{\Lambda}\to P'_{\bar\Lambda}$ such that $f=gf'$.  Now the map $f'$ whose target is an injective module must also factor through $i$, the injective envelope of $P_\Lambda$.  Thus, $f'=f''i$ for some $f'': I_{\bar\Lambda} \to P'_{\bar\Lambda}$.   Thus we obtain $f=gf'=gf''i$, which implies that $f\in \im\, i^*$.  This yields the desired conclusion. 
\end{proof}

\begin{lemma}\label{lem:pm}
If $F(M)\oplus \Omega^{-1}_{\bar\Lambda} P_{\Lambda}$ is rigid in $\textup{mod}\,\bar\Lambda$ then $\Hom_{\Lambda}(P_\Lambda, M)=0$. 
\end{lemma}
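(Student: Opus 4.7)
The plan is to show that any map $f\colon P_\Lambda\to M$ in $\mod\,\Lambda$ must vanish, by lifting it to $F(M)$, extending it along the injective envelope of $P_\Lambda$ in $\mod\,\bar\Lambda$ (using the rigidity hypothesis), and then using a simple support argument to kill the resulting composition.

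First I would fix a map $f\colon P_\Lambda\to M$. Because $P_\Lambda\in\proj\,\Lambda\subseteq\proj\,\bar\Lambda$ by Lemma~\ref{lem:proj}(a), and $\pi_M\colon F(M)\to M$ from diagram \eqref{eq:33} is surjective, projectivity gives a lift $\tilde f\colon P_\Lambda\to F(M)$ with $\pi_M\tilde f=f$. Next, I would apply $\Hom_{\bar\Lambda}(-,F(M))$ to the short exact sequence
\[0\to P_\Lambda\xrightarrow{i} I_{\bar\Lambda}\to \Omega^{-1}_{\bar\Lambda}P_\Lambda\to 0\]
of \eqref{eq:5}, where $I_{\bar\Lambda}$ is the injective envelope of $P_\Lambda$ in $\mod\,\bar\Lambda$ (projective-injective, with top in $\mod\,\Lambda^\bullet$, by Lemma~\ref{lem:proj}(b)). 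The resulting long exact sequence
\[\Hom_{\bar\Lambda}(I_{\bar\Lambda},F(M))\to \Hom_{\bar\Lambda}(P_\Lambda,F(M))\to \Ext^1_{\bar\Lambda}(\Omega^{-1}_{\bar\Lambda}P_\Lambda,F(M))\]
has the last term equal to zero: it is a direct summand of $\Ext^1_{\bar\Lambda}(F(M)\oplus\Omega^{-1}_{\bar\Lambda}P_\Lambda,F(M)\oplus\Omega^{-1}_{\bar\Lambda}P_\Lambda)$, which vanishes by hypothesis. Hence $\tilde f$ extends to a map $\hat f\colon I_{\bar\Lambda}\to F(M)$ with $\hat f\circ i=\tilde f$.

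The composition $\pi_M\hat f\colon I_{\bar\Lambda}\to M$ is then a map from a $\bar\Lambda$-module whose top lies entirely in $\mod\,\Lambda^\bullet$ to a module $M\in\mod\,\Lambda$. Any such map is zero: the image $\pi_M\hat f(I_{\bar\Lambda})$ is simultaneously a quotient of $I_{\bar\Lambda}$, hence has top in $\mod\,\Lambda^\bullet$, and a submodule of $M$, hence is supported in $\mod\,\Lambda$, forcing the top, and therefore the whole image, to vanish. Thus $f=\pi_M\tilde f=\pi_M\hat f\circ i=0$, as desired.

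The only non-formal step is the extension of $\tilde f$ to $\hat f$, which requires precisely the rigidity of $F(M)\oplus\Omega^{-1}_{\bar\Lambda}P_\Lambda$ in the asymmetric direction $\Ext^1_{\bar\Lambda}(\Omega^{-1}_{\bar\Lambda}P_\Lambda,F(M))=0$; the rest is bookkeeping with projectivity of $P_\Lambda$ in $\mod\,\bar\Lambda$ and the disjointness of supports between $\mod\,\Lambda$ and $\mod\,\Lambda^\bullet$ already exploited throughout Section~3.
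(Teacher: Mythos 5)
Your proof is correct, and it takes a mildly but genuinely different route from the paper's. The paper argues entirely in Ext-calculus: applying $\Hom_{\bar\Lambda}(-,M)$ to \eqref{eq:5} it first identifies $\Hom_{\bar\Lambda}(P_\Lambda,M)\cong \Ext^1_{\bar\Lambda}(\Omega^{-1}_{\bar\Lambda}P_\Lambda,M)$ (using $\Hom_{\bar\Lambda}(I_{\bar\Lambda},M)=0$ and injectivity of $I_{\bar\Lambda}$), and then kills this Ext group by applying $\Hom_{\bar\Lambda}(\Omega^{-1}_{\bar\Lambda}P_\Lambda,-)$ to $0\to\ker\pi_M\to F(M)\to M\to 0$, using the hypothesis $\Ext^1_{\bar\Lambda}(\Omega^{-1}_{\bar\Lambda}P_\Lambda,F(M))=0$ together with $\Ext^2_{\bar\Lambda}(\Omega^{-1}_{\bar\Lambda}P_\Lambda,\ker\pi_M)=0$ from $\pd_{\bar\Lambda}\Omega^{-1}_{\bar\Lambda}P_\Lambda\le 1$. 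You instead work at the level of morphisms: lift $f$ through the surjection $\pi_M$ using $P_\Lambda\in\proj\,\bar\Lambda$ (Lemma~\ref{lem:proj}(a)), extend through the inclusion $i$ of \eqref{eq:5} using the same summand $\Ext^1_{\bar\Lambda}(\Omega^{-1}_{\bar\Lambda}P_\Lambda,F(M))=0$ of the rigidity hypothesis, and then kill $\pi_M\hat f$ by the support observation $\Hom_{\bar\Lambda}(I_{\bar\Lambda},M)=0$ (which the paper also invokes, since $\tp\,I_{\bar\Lambda}\in\mod\,\Lambda^\bullet$ while $M\in\mod\,\Lambda$). The shared core is \eqref{eq:5} plus the one asymmetric Ext-vanishing; what your version buys is the avoidance of $\Ext^2$, of the kernel term $\ker\pi_M$, and of the intermediate isomorphism $\Hom_{\bar\Lambda}(P_\Lambda,M)\cong\Ext^1_{\bar\Lambda}(\Omega^{-1}_{\bar\Lambda}P_\Lambda,M)$, at the cost of a small diagram chase; the paper's version has the advantage that this intermediate isomorphism is exactly the same mechanism used in the converse direction (Lemma~\ref{lem:rigid4}), so the two lemmas there read as mirror images. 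All individual steps of your argument check out: the lift exists because $\pi_M$ is surjective by \eqref{eq:33}, the extension exists because the relevant Ext group is a direct summand of the self-extension group of the rigid module, and the final vanishing is correct since a quotient of $I_{\bar\Lambda}$ has top in $\mod\,\Lambda^\bullet$ while a submodule of $M$ lies in $\mod\,\Lambda$.
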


\begin{proof}
Consider the short exact sequence \eqref{eq:5} where $I_{\bar\Lambda}\in \projinj\,\bar\Lambda$.  Applying $\Hom_{\bar\Lambda}(-,M)$ we obtain 
\[\Hom_{\bar\Lambda}(I_{\bar\Lambda},M)\to \Hom_{\bar\Lambda}(P_\Lambda,M)\to \Ext^1_{\bar\Lambda}(\Omega^{-1}_{\bar\Lambda} P_{\Lambda},M)\to \Ext^1_{\bar\Lambda}(I_{\bar\Lambda}, M)\]
where the first term is zero because $\tp\, I_{\bar\Lambda}\in \text{mod}\,\Lambda^\bullet$ and the last term is zero since $I_{\bar\Lambda}$ is a projective-injective.  Thus,  $\Hom_{\bar\Lambda}(P_\Lambda,M)\cong \Ext^1_{\bar\Lambda}(\Omega^{-1}_{\bar\Lambda} P_{\Lambda},M)$, and it remains to show that the latter term $\Ext^1_{\bar\Lambda}(\Omega^{-1}_{\bar\Lambda} P_{\Lambda},M)$ is zero. 

Now, apply $\Hom_{\bar\Lambda}(\Omega^{-1}_{\bar\Lambda} P_{\Lambda}, - )$ to the short exact sequence
\[0 \to \ker\,\pi \to F(M)\to M \to 0\]
to obtain 
\[\Ext^1_{\bar\Lambda}(\Omega^{-1}_{\bar\Lambda} P_{\Lambda}, F(M) )\to  \Ext^1_{\bar\Lambda}(\Omega^{-1}_{\bar\Lambda} P_{\Lambda}, M )\to \Ext^2_{\bar\Lambda}(\Omega^{-1}_{\bar\Lambda} P_{\Lambda}, \ker\,\pi ).\]
The first term is zero by assumption and the last term is zero because $\pd_{\bar\Lambda} \Omega^{-1}_{\bar\Lambda} P_{\Lambda}\leq 1$.  This implies that $\Ext^1_{\bar\Lambda}(\Omega^{-1}_{\bar\Lambda} P_{\Lambda}, M )=0$ and completes the proof of the lemma. 
\end{proof}

We are now ready to prove one of our main theorems. 

\begin{thm}\label{thm:f}
The map $\bar F: \stilt\,\Lambda \to \tilt\,\bar\Lambda$ is a bijection.  
\end{thm}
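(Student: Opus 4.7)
The strategy is to combine Theorem~\ref{thm:bij} with a decomposition principle for $\bar\Lambda$-modules of projective dimension at most one. The key observation, already implicit in the proof of Lemma~\ref{lem:surj}, is that any such module splits uniquely as a direct sum of summands from the three mutually disjoint classes $\mathcal{H}(\bar\Lambda)$, $\add\,\Omega^{-1}_{\bar\Lambda}(\proj\,\Lambda)$, and $\projinj\,\bar\Lambda$. (Disjointness of the last two holds because if $\Omega^{-1}_{\bar\Lambda}P_\Lambda$ were projective-injective, the sequence $0\to P_\Lambda\to I\to \Omega^{-1}_{\bar\Lambda}P_\Lambda\to 0$ would split, forcing $P_\Lambda$ to be injective in $\bar\Lambda$, which is impossible by Lemma~\ref{lem:proj}.) With this at hand I would verify well-definedness, injectivity, and surjectivity of $\bar F$ in turn.

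For well-definedness, I must show $\bar F(M, P_\Lambda) = F(M) \oplus \Omega^{-1}_{\bar\Lambda} P_\Lambda \oplus Q_{\bar\Lambda}$ is tilting. The projective dimension bound holds summand-wise by Proposition~\ref{prop:F}(b), Lemma~\ref{lem:omega}, and the fact that $Q_{\bar\Lambda}$ is projective. Rigidity of each individual summand comes from Proposition~\ref{prop:rigid2} applied to the $\tau_\Lambda$-rigid $M$, from Lemma~\ref{lem:omega}, and from $Q_{\bar\Lambda} \in \projinj\,\bar\Lambda$; the cross-rigidity between $F(M)$ and $\Omega^{-1}_{\bar\Lambda} P_\Lambda$ is exactly Lemma~\ref{lem:rigid4} (applicable since $\Hom_\Lambda(P_\Lambda, M)=0$), and all Ext's involving $Q_{\bar\Lambda}$ vanish by projectivity or injectivity. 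Finally, the counts $|F(M)|=|M|$ by Proposition~\ref{prop:F}(d), $|\Omega^{-1}_{\bar\Lambda} P_\Lambda|=|P_\Lambda|$, and $|Q_{\bar\Lambda}|=|\Lambda|$ yield $|\bar F(M,P_\Lambda)| = |M|+|P_\Lambda|+|\Lambda| = |\Lambda|+|\Lambda| = |\bar\Lambda|$.

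For injectivity, an isomorphism $\bar F(M, P_\Lambda) \cong \bar F(M', P'_\Lambda)$ forces, by Krull--Schmidt and the disjointness of the three classes, both $F(M) \cong F(M')$ and $\Omega^{-1}_{\bar\Lambda} P_\Lambda \cong \Omega^{-1}_{\bar\Lambda} P'_\Lambda$. Lemma~\ref{lem:inj} then gives $M \cong M'$, while $P_\Lambda$ is recovered from $\Omega^{-1}_{\bar\Lambda} P_\Lambda$ as the first syzygy in its minimal projective resolution over $\bar\Lambda$. For surjectivity, let $T \in \tilt\,\bar\Lambda$. I would first show every indecomposable $P \in \projinj\,\bar\Lambda$ is a summand of $T$: since $P$ is both projective and injective, $T\oplus P$ remains rigid with $\pd_{\bar\Lambda}(T\oplus P)\leq 1$, so the maximality $|T|=|\bar\Lambda|$ forces $P$ already to occur in $T$. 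Hence $T = T' \oplus Q_{\bar\Lambda}$, and the decomposition principle gives $T' = T_1 \oplus T_2$ with $T_1 \in \add\,\mathcal{H}(\bar\Lambda)$ and $T_2 \in \add\,\Omega^{-1}_{\bar\Lambda}(\proj\,\Lambda)$. Theorem~\ref{thm:bij} yields $T_1 = F(M)$ for a unique $M$, and $T_2 = \Omega^{-1}_{\bar\Lambda} P_\Lambda$ for a unique $P_\Lambda$; it remains to confirm $(M, P_\Lambda) \in \stilt\,\Lambda$, using Proposition~\ref{prop:rigid} for $\tau$-rigidity of $M$, Lemma~\ref{lem:pm} for $\Hom_\Lambda(P_\Lambda, M)=0$, and the counts above for $|M|+|P_\Lambda|=|\Lambda|$. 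The main delicacy lies in the decomposition principle itself, but this has essentially already been carried out in the proof of Lemma~\ref{lem:surj}.
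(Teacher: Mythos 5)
Your proposal is correct and takes essentially the same route as the paper: the same summand-wise verification of $\pd\le 1$ and rigidity (Proposition~\ref{prop:F}(b), Lemma~\ref{lem:omega}, Proposition~\ref{prop:rigid2}, Lemma~\ref{lem:rigid4}), the same summand count, and for surjectivity the same decomposition of a tilting $\bar\Lambda$-module into an $\mathcal{H}(\bar\Lambda)$-part, a part of the form $\Omega^{-1}_{\bar\Lambda}P_\Lambda$, and $Q_{\bar\Lambda}$, followed by Theorem~\ref{thm:bij}, Theorem~\ref{thm:rigid_bij} and Lemma~\ref{lem:pm} — the paper asserts this decomposition (and the fact that $Q_{\bar\Lambda}$ occurs in every tilting module) just as briefly as you do. Your explicit injectivity argument and your counting justification that every projective-injective is a summand of any tilting module merely spell out steps the paper leaves implicit.
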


\begin{proof}
Let $(M, P_\Lambda)$ be a support $\tau$-tilting pair.  Then by definition $\Hom_{\Lambda}(M, \tau_{\Lambda} M)=0$ and $\Hom_\Lambda(P_\Lambda,M)=0$.   By definition of $\bar F$ we have 
\[\bar F(M, P_{\Lambda}) = F(M)\oplus \Omega^{-1}_{\bar\Lambda} P_{\Lambda}\oplus Q_{\bar\Lambda}.\]
Note that the projective dimension of each of the summands of $\bar F(M, P_{\Lambda})$ is at most one by Proposition~\ref{prop:F}(b) and Lemma~\ref{lem:omega}.

Now we want to show that $\bar F(M, P_{\Lambda})$ has no self-extensions, and since the last term is projective-injective it suffices to show that 
\[\Ext^1_{\bar\Lambda}(F(M), F(M))\oplus \Ext^1_{\bar\Lambda}(\Omega^{-1}_{\bar\Lambda} P_{\Lambda}, \Omega^{-1}_{\bar\Lambda} P_{\Lambda})\oplus \Ext^1_{\bar\Lambda}(F(M), \Omega^{-1}_{\bar\Lambda} P_{\Lambda})\oplus \Ext^1_{\bar\Lambda}(\Omega^{-1}_{\bar\Lambda} P_{\Lambda}, F(M))=0.\]
The first term is zero by Proposition~\ref{prop:rigid2}, the second term is zero by Lemma~\ref{lem:omega}, and the remaining two terms are zero by Lemma~\ref{lem:rigid4}.   This shows that $\bar F(M, P_{\Lambda})$ is partial tilting in $\bar\Lambda$.  

Lastly, we check that $ \left|\bar F(M, P_{\Lambda})\right|=\left| \bar \Lambda \right|.$  By Proposition~\ref{prop:F}(a) we have that $\left|F(M)\right|\geq \left| M\right|$, and since $P_{\Lambda}$ does not contain any summands that are injective $\bar\Lambda$-modules then $\left|  \Omega^{-1}_{\bar\Lambda} P_{\Lambda}\right| \geq \left| P_{\Lambda}\right|$.  Therefore $\left| F(M)\oplus \Omega^{-1}_{\bar\Lambda} P_{\Lambda} \right|\geq \left| M\oplus P_{\Lambda}\right|$.  Since $\left| M\oplus P_{\Lambda}\right| = \left| \Lambda \right|$, we obtain 

\[\left| \bar F(M, P_{\Lambda}) \right| = \left| F(M)\oplus \Omega^{-1}_{\bar\Lambda} P_{\Lambda} \right| + \left|  Q_{\bar\Lambda} \right|  \geq  \left| M\oplus P_{\Lambda}\right| + \left| \Lambda \right|=2 \left| \Lambda \right| = \left| \bar \Lambda \right|\]

Since we know that $\bar F(M, P_{\Lambda})$ is rigid and has projective dimension at most one, then we also have that $\left| \bar F(M, P_{\Lambda}) \right| \leq \left| \bar \Lambda \right|$.  This implies that $\left| \bar F(M, P_{\Lambda}) \right| = \left| \bar \Lambda \right|$.

Combining all of the above, we conclude that $\bar F(M, P_{\Lambda})$ is a tilting $\bar\Lambda$-module. 

Now, it remains to show that $\bar F$ is surjective.  Let $\bar M \in \bar\Lambda$ be a tilting module.  Note that $ Q_{\bar\Lambda}$ belongs to every tilting module as it has no nonzero extensions with any module.  Hence, since $\pd_{\bar\Lambda} \bar M \leq 1$, we can decompose $\bar M$ as follows
\[\bar M \cong M' \oplus \Omega^{-1}_{\bar \Lambda} P_{\Lambda} \oplus  Q_{\bar\Lambda}\]
where $M'\in \mathcal{H}(\bar\Lambda)$ and $P_{\Lambda}\in\proj\,\Lambda$.  Since $F$ is surjective onto $\mathcal{H}(\bar\Lambda)$, there exists $M\in \text{mod}\,\Lambda$ such that $F(M)\cong M'$.  Now we want to show that the pair $(M, P_{\Lambda})$ is $\tau_{\Lambda}$-rigid. Indeed, $\Hom_{\Lambda}(M, \tau_{\Lambda} M)=0$ by Theorem~\ref{thm:rigid_bij}, and $\text{Hom}_{\Lambda}(P_\Lambda, M)=0$ by Lemma~\ref{lem:pm}.  Thus, $\bar F(M, P_{\Lambda}) \cong \bar M$.  This shows that $\bar F$ is surjective and completes the proof of the theorem. 
\end{proof}

We will show in the next section that $\bar F$ is order preserving, see Corollary~\ref{cor:all}.  Indeed, the poset $\stilt\,\Lambda$ is isomorphic to the poset $\tilt\,\bar\Lambda$ under $\bar F$ and forms a Bongartz interval in  $\stilt\,\bar\Lambda$.

\begin{example}
Let $\Lambda$ be the path algebra given by a quiver as in Example~\ref{ex:3}.  Then $\stilt\,\Lambda$ is shown on the left, and under $\bar F$ it is mapped to $\tilt\,\bar\Lambda$ as shown on the right.  

\[ \scalebox{.7}{\xymatrix@C=10pt@R=25pt{
&\left( {\begin{matrix}1\\2 \end{matrix}}\oplus {\begin{matrix} 2\\3 \end{matrix}} \oplus 3, 0\right)\ar@{-}[dr]\ar@{-}[dl] \ar@{-}[d]\\
 \left({\begin{matrix} 2\\3 \end{matrix}} \oplus 3, P(1) \right) \ar@{-}[ddd]\ar@{-}[ddr]
 & \left( {\begin{matrix}1\\2 \end{matrix}}\oplus 1 \oplus 3, 0\right) \ar@{-}[ddr] \ar@{-}[d]
 &  \left( {\begin{matrix}1\\2 \end{matrix}}\oplus {\begin{matrix} 2\\3 \end{matrix}} \oplus 2, 0\right) \ar@{-}[d] \ar@{-}[ddl]\\
&(1\oplus 3, P(2)) \ar@{-}[ddr]\ar@{-}[ddl]
&  \left( {\begin{matrix}1\\2 \end{matrix}}\oplus  2, P(3)\right) \ar@{-}[d] \ar@{-}[ddl] & \ar[rrr]^{\scalebox{1.2}{$\bar F$}}&&&\\
&\left({\begin{matrix} 2\\3 \end{matrix}} \oplus 2, P(1) \right) \ar@{-}[d]
& \left( {\begin{matrix}1\\2 \end{matrix}}\oplus  1, P(3)\right)\ar@{-}[d]&&\\
 (3, P(1)\oplus P(2)) \ar@{-}[dr]  & (2, P(1)\oplus P(3)) \ar@{-}[d] & (1, P(2)\oplus P(3)) \ar@{-}[dl] \\
&(0, P(1)\oplus P(2)\oplus P(3))
}}
\hspace{.0cm}\scalebox{.7}{\xymatrix@C=10pt@R=22pt{
& {\begin{matrix}1\\2 \end{matrix}}\oplus {\begin{matrix} 2\\3 \end{matrix}} \oplus 3\oplus Q_{\bar\Lambda} \ar@{-}[dr]\ar@{-}[dl] \ar@{-}[d]\\
{\begin{matrix} 2\\3 \end{matrix}} \oplus 3 \oplus {\begin{matrix} 2^
\bullet\\3^\bullet \end{matrix}} \oplus Q_{\bar\Lambda}  \ar@{-}[ddd]\ar@{-}[ddr]
 & {\begin{matrix}1\\2 \end{matrix}}\oplus  {\begin{matrix}3^
\bullet \,1\\\,2 \end{matrix}} \oplus 3  \oplus Q_{\bar\Lambda} \ar@{-}[ddr] \ar@{-}[d]
 &   {\begin{matrix}1\\2 \end{matrix}}\oplus {\begin{matrix} 2\\3 \end{matrix}} \oplus 2 \oplus Q_{\bar\Lambda} \ar@{-}[d] \ar@{-}[ddl]\\
& {\begin{matrix} 3^\bullet \,1\\\,2\end{matrix}} \oplus 3 \oplus 3^\bullet \oplus Q_{\bar\Lambda} \ar@{-}[ddr]\ar@{-}[ddl]
&  {\begin{matrix}1\\2 \end{matrix}}\oplus  2 \oplus {\begin{matrix}3^\bullet \\2\,\, \end{matrix}} \oplus Q_{\bar\Lambda} \ar@{-}[d] \ar@{-}[ddl]\\
&{\begin{matrix} 2\\3 \end{matrix}} \oplus 2\oplus {\begin{matrix}2^\bullet \\3^\bullet \end{matrix}}  \oplus Q_{\bar\Lambda} \ar@{-}[d]
& {\begin{matrix}1\\2 \end{matrix}}\oplus  {\begin{matrix} 3^\bullet \,1\\\,2\end{matrix}} \oplus {\begin{matrix}3^\bullet \\2\,\, \end{matrix}}  \oplus Q_{\bar\Lambda} \ar@{-}[d]\\
 3\oplus {\begin{matrix}2^\bullet \\3^\bullet \end{matrix}}\oplus 3^\bullet  \oplus Q_{\bar\Lambda} \ar@{-}[dr]  
 & 2\oplus  {\begin{matrix}2^\bullet \\3^\bullet \end{matrix}}\oplus {\begin{matrix}3^\bullet \\2\,\, \end{matrix}}  \oplus Q_{\bar\Lambda} \ar@{-}[d] &  {\begin{matrix} 3^\bullet \,1\\\,2\end{matrix}} \oplus 3^\bullet \oplus {\begin{matrix}3^\bullet \\2\,\, \end{matrix}}  \oplus Q_{\bar\Lambda} \ar@{-}[dl] \\
&{\begin{matrix}2^\bullet \\3^\bullet \end{matrix}}\oplus 3^\bullet \oplus {\begin{matrix}3^\bullet \\2\,\, \end{matrix}}  \oplus Q_{\bar\Lambda}
}}
\]
\end{example}

\section{Bongartz intervals in $\bar\Lambda$}

In this section we define another map $G: \mod\,\Lambda \to \mod\,\bar\Lambda$ that maps $\tau$-rigid $\Lambda$-modules to $\tau$-rigid $\bar\Lambda$-modules.  We extend it to an inclusion $\bar G: \stilt\,\Lambda\to \stilt\,\bar\Lambda$ on support $\tau$-tilting pairs and show that the image of $\bar G$ is a Bongartz interval in $\stilt\,\bar\Lambda$ isomorphic to $\stilt\,\Lambda$.   Moreover, every support $\tau$-tilting pair in the image of $\bar G$ is a maximal element of a different Bongartz interval in $\stilt\,\bar\Lambda$ which is also isomorphic to $\stilt\,\bar\Lambda$.  As an application we conclude that the product of posets $\stilt\,\Lambda \times \stilt\,\Lambda$ is a subposet of $\stilt\,\bar\Lambda$.  Moreover, it induces a similar embedding on the level of maximal green sequences.

Recall that $\bar\Lambda$ contains two isomorphic copies of $\Lambda$, which we denote by $\Lambda, \Lambda^\bullet$.  We begin by defining two maps $\mod\,\Lambda \to \mod\,\bar\Lambda$.

\begin{definition}
Define a functor $(-)^\bullet: \mod\,\Lambda \to \mod\,\Lambda^\bullet$  by mapping $\mod\,\Lambda$ isomorphically to $\mod\,\Lambda^\bullet$.
\end{definition}

Note that there is no ambiguity with this notation in the sense that the module $\Lambda^\bullet \in \mod\,\bar\Lambda$ is the same as the module obtained by applying the map $(-)^\bullet$ to $\Lambda$. 

By composing  $(-)^\bullet$ with the inclusion $\mod\,\Lambda^\bullet\to \mod\,\bar\Lambda$ we will consider $(-)^\bullet$ as a functor with image in $\bar\Lambda$. 

\begin{definition}\label{def:G}
Define a map $G: \mod\,\Lambda \to \mod\,\bar\Lambda$ on the objects as follows. 
First, given a projective $\Lambda$-module $P_{\Lambda}=\bigoplus_{i} P_{\Lambda}(i)^{m_i}$ let $G(P_{\Lambda})= \bigoplus_{i} P_{\bar\Lambda}(i^\bullet)^{m_i}$ be the associated projective $\bar\Lambda$-module at the dotted vertices.  
Now, given $M\in \mod \,\Lambda$ with projective presentation 
\[\xymatrix{P^1_{\Lambda} \ar[r]^{f} & P^0_{\Lambda}\ar[r] & M \ar[r] & 0}\]
define $G(M)$ to be the cokernel of the map $G(f)$ 
\[\xymatrix{G(P^1_{\Lambda}) \ar[r]^{G(f)} & G(P^0_{\Lambda})\ar[r] & G(M) \ar[r] & 0}.\]
Note that a map $f$ between projective $\Lambda$-modules is induced by paths in the quiver, so $G(f)$ is obtained by taking analogous paths between the dotted vertices in the subquiver of $\Lambda^\bullet$ viewed as a subalgebra of $\bar\Lambda$. 
\end{definition}

The following observations follow immediately from the definition of $G$. 

\begin{lemma}\label{lem:g0}
Let $M\in\mod\,\Lambda$.  Then
\begin{itemize}
\item[(a)] $M$ is indecomposable if and only if $G(M)$ is indecomposable, and 
\item[(b)] there is a short exact sequence $\xymatrix{0\ar[r]& \ker\,\rho_M  \ar[r]&G(M) \ar[r]^{\rho_M} & M^\bullet\ar[r] & 0}$ in $\mod\,\bar\Lambda$ with $\ker\,\rho_M\in\mod\,\Lambda$.
\end{itemize}
\end{lemma}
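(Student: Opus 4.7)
My plan is to prove part (b) first and use it as a tool for part (a). For part (b), I will start with the projective presentation $P^1_\Lambda \xrightarrow{f} P^0_\Lambda \to M \to 0$ used to define $G(M)$, and apply Lemma~\ref{lem:proj}(b) summandwise to each $G(P^i_\Lambda)$ to obtain short exact sequences $0 \to I_\Lambda^{(i)} \to G(P^i_\Lambda) \to P^{i,\bullet}_\Lambda \to 0$ for $i=0,1$, where $I_\Lambda^{(i)}$ is a sum of indecomposable injective $\Lambda$-modules and $P^{i,\bullet}_\Lambda = (P^i_\Lambda)^\bullet$ is the corresponding projective $\Lambda^\bullet$-module. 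Since $G(f)$ is built from paths in $\Lambda^\bullet$, it descends on the quotient to the map $f^\bullet: P^{1,\bullet}_\Lambda \to P^{0,\bullet}_\Lambda$, and the universal property of kernels yields an induced map $I_\Lambda^{(1)} \to I_\Lambda^{(0)}$ making the resulting morphism of short exact sequences commute. The snake lemma then produces an exact sequence $\coker(I_\Lambda^{(1)} \to I_\Lambda^{(0)}) \to G(M) \xrightarrow{\rho_M} M^\bullet \to 0$; the desired surjection $\rho_M$ appears in the middle, and $\ker \rho_M$ is a quotient of the $\Lambda$-module $I_\Lambda^{(0)}$, hence lies in $\mod\,\Lambda$.

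For part (a), the forward direction is routine. If $M = M_1 \oplus M_2$, then the projective presentation decomposes as $f_1 \oplus f_2$, and the additivity of the $G$-construction on morphisms (inherited from the isomorphism $(-)^\bullet$ on Hom spaces between projective $\Lambda$-modules) gives $G(f) = G(f_1) \oplus G(f_2)$, so $G(M) = G(M_1) \oplus G(M_2)$; each summand is nonzero since part (b) exhibits $M_i^\bullet \neq 0$ as a quotient of $G(M_i)$.

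For the converse, I will take the minimal projective presentation of $M$ and first verify that $G(f)$ is a minimal projective presentation of $G(M)$ in $\mod\,\bar\Lambda$. The critical point is the canonical identification $\Hom_\Lambda(P^1_\Lambda, P^0_\Lambda) \cong \Hom_{\bar\Lambda}(G(P^1_\Lambda), G(P^0_\Lambda))$ coming from the subalgebra $\Lambda^\bullet \hookrightarrow \bar\Lambda$, under which $f$ corresponds to $G(f)$ and radicals of the Hom spaces match, so minimality transfers. Given this, a decomposition $G(M) = A \oplus B$ forces $G(P^0_\Lambda)$ to split as the projective covers of $A$ and $B$; since every indecomposable summand of $G(P^0_\Lambda)$ has the form $P_{\bar\Lambda}(i^\bullet)$, the bijection $P_\Lambda(i) \leftrightarrow P_{\bar\Lambda}(i^\bullet)$ yields a decomposition $P^0_\Lambda = P^0_A \oplus P^0_B$ in $\mod\,\Lambda$. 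An analogous argument splits $P^1_\Lambda = P^1_A \oplus P^1_B$, and the Hom identification then forces $f = f_A \oplus f_B$, giving $M = \coker f_A \oplus \coker f_B$ with both summands nonzero (since $G$ of a zero module is zero).

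The main obstacle I anticipate is the verification that $G$ preserves minimality of projective presentations, that is, that $\im G(f) \subseteq \rad G(P^0_\Lambda)$ whenever $\im f \subseteq \rad P^0_\Lambda$; this reduces to a radical-preservation statement under the canonical identification of Hom spaces, requiring an explicit computation of $\rad P_{\bar\Lambda}(i^\bullet)$ inside $\bar\Lambda$ and its interaction with $\Lambda^\bullet$-paths.
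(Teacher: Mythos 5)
Your proposal is correct and follows essentially the same route as the paper: part (b) is the identical argument (the natural surjections $G(P^i_\Lambda)\to (P^i_\Lambda)^\bullet$ from Lemma~\ref{lem:proj}(b), the induced map of presentations, and the snake lemma showing $\ker\rho_M$ is a quotient of an injective $\Lambda$-module), and part (a) rests on the same principle the paper invokes, namely that $f$ decomposes nontrivially if and only if $G(f)$ does. Your treatment of the converse in (a) simply makes explicit (via minimal presentations and the transfer of minimality under the identification $e_j\Lambda e_i\cong e_{j^\bullet}\bar\Lambda e_{i^\bullet}$, which does hold since $\rad\bar\Lambda$ meets $e_{j^\bullet}\bar\Lambda e_{i^\bullet}$ exactly in the image of $e_j(\rad\Lambda)e_i$) what the paper leaves as a one-line assertion.
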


\begin{proof}
Part (a) follows because the map $f$ in Definition~\ref{def:G} decomposes as a direct sum of two nonzero maps if and only if the same holds for the map $G(f)$.  Part (b) can be shown as follows.  For projective modules $P_{\Lambda}$ there is a natural surjection $\rho: G(P_{\Lambda})\to (P_{\Lambda})^\bullet$, which then induces the surjection $\rho_M: G(M) \to M^\bullet$.  In particular, we have the following commutative diagram with exact rows. 
\[
\xymatrix{G(P^1_{\Lambda}) \ar[r]^{G(f)} \ar@{->>}[d]^{\rho_1}& G(P^0_{\Lambda})\ar[r] \ar@{->>}[d]^{\rho_0}& G(M) \ar[r] \ar@{->>}[d]^{\rho_M}& 0\\
(P^1_{\Lambda})^\bullet \ar[r]^{f^\bullet} & (P^0_{\Lambda})^\bullet\ar[r] & M^\bullet \ar[r] & 0}
\]

Then the snake lemma implies that there is a surjection $\ker\,\rho_0\to \ker\,\rho_M$.  By Lemma~\ref{lem:proj}(b), we have that $\ker\,\rho_0$ is an injective $\Lambda$-module, which implies that $\ker\,\rho_M\in\mod\,\Lambda$. 
\end{proof}

We also have the following relationship between $\tau$ and the map $G$. 

\begin{lemma}\label{lem:g1}
Let $M\in\mod\,\Lambda$.  Then $\tau_{\bar\Lambda} G(M) \cong (\tau_{\Lambda} M)^\bullet$. 
\end{lemma}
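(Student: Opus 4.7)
The strategy is to compute both sides using a single minimal projective presentation of $M$ and compare. Start with a minimal projective presentation $P^1_\Lambda \xrightarrow{f} P^0_\Lambda \to M \to 0$ in $\mod\,\Lambda$. By Definition~\ref{def:G}, applying $G$ yields the right exact sequence $G(P^1_\Lambda) \xrightarrow{G(f)} G(P^0_\Lambda) \to G(M) \to 0$ in $\mod\,\bar\Lambda$.

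The first substantive step is to verify that this induced presentation of $G(M)$ is again minimal. The key observation is that the rule $P_\Lambda(i) \mapsto P_{\bar\Lambda}(i^\bullet)$ extends to a fully faithful additive functor from $\proj\,\Lambda$ to $\proj\,\bar\Lambda$ that preserves the radical. This follows directly from the block-matrix form of $\bar\Lambda$: one has $\Hom_{\bar\Lambda}(P_{\bar\Lambda}(i^\bullet), P_{\bar\Lambda}(j^\bullet)) = e_{j^\bullet}\bar\Lambda e_{i^\bullet} \cong e_j \Lambda e_i = \Hom_\Lambda(P_\Lambda(i), P_\Lambda(j))$, and this identification carries radical maps to radical maps. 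Since $G(P^0_\Lambda)$ has top matching that of $G(M)$ and $G(f)$ remains radical, the induced presentation of $G(M)$ is minimal.

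Next I would identify the Nakayama outputs. By Lemma~\ref{lem:proj}(c), we have
\[\nu_{\bar\Lambda} P_{\bar\Lambda}(i^\bullet) = I_{\bar\Lambda}(i^\bullet) = I_{\Lambda^\bullet}(i^\bullet) = (I_\Lambda(i))^\bullet = (\nu_\Lambda P_\Lambda(i))^\bullet.\]
Thus on indecomposable projectives the functor $\nu_{\bar\Lambda} \circ G$ agrees with $(-)^\bullet \circ \nu_\Lambda$ on objects. The identification propagates to morphisms because $\mod\,\Lambda^\bullet \hookrightarrow \mod\,\bar\Lambda$ is fully faithful and the Nakayama functor is natural in the underlying algebra, which is transported by the isomorphism $\Lambda \cong \Lambda^\bullet$. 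Consequently, under these identifications the map $\nu_{\bar\Lambda} G(f)$ corresponds to $(\nu_\Lambda f)^\bullet$.

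Putting this together, and using that $(-)^\bullet$ is exact (being an equivalence of categories), we conclude
\[\tau_{\bar\Lambda} G(M) \cong \ker(\nu_{\bar\Lambda} G(f)) \cong \ker\bigl((\nu_\Lambda f)^\bullet\bigr) \cong \bigl(\ker(\nu_\Lambda f)\bigr)^\bullet \cong (\tau_\Lambda M)^\bullet.\]
The main obstacle is the verification of naturality of $\nu$ with respect to $G$ and $(-)^\bullet$ on morphism spaces. However, this ultimately reduces to the block-matrix identity $e_{j^\bullet}\bar\Lambda e_{i^\bullet} \cong e_j \Lambda e_i$ together with the observation that $\nu_{\bar\Lambda}$, restricted to $\textup{add}\{P_{\bar\Lambda}(i^\bullet)\}_i$, lands in $\mod\,\Lambda^\bullet$ where the full faithfulness of the embedding makes the compatibility formal.
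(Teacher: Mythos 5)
Your proposal is correct and follows essentially the same route as the paper: apply $G$ and then $\nu_{\bar\Lambda}$ to a minimal projective presentation of $M$, identify $\nu_{\bar\Lambda}P_{\bar\Lambda}(i^\bullet)\cong (I_\Lambda(i))^\bullet$ via Lemma~\ref{lem:proj}, and observe that $\nu_{\bar\Lambda}G(f)$ is $(\nu_\Lambda f)^\bullet$, so the two left-exact sequences match under $(-)^\bullet$. Your extra check that $G$ preserves minimality of the presentation (via the block-matrix identification of Hom-spaces) is a point the paper leaves implicit, but it is the same argument in substance.
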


\begin{proof}
Let $M\in\mod\,\Lambda$ with projective presentation 
\[\xymatrix{P^1_{\Lambda} \ar[r]^{f} & P^0_{\Lambda}\ar[r] & M \ar[r] & 0}.\] 
Applying the Nakayama functor $\nu_{\Lambda}$ yields an exact sequence 
\begin{equation}\label{eq:g1}
\xymatrix{0\ar[r] & \tau_{\Lambda} M \ar[r] & \nu_{\Lambda} P^1_{\Lambda} \ar[r]^{\nu_{\Lambda} f} & \nu _{\Lambda}P^0_{\Lambda} }.
\end{equation}

By definition $G(P_{\Lambda}(i))\cong P_{\bar\Lambda}(i^\bullet)$ for every vertex $i$ of $\Lambda$, so applying the Nakayama functor over $\bar\Lambda$, we obtain the following sequence of isomorphisms 
\[\nu_{\bar\Lambda} G(P_{\Lambda}(i))\cong\nu_{\bar\Lambda}P_{\bar\Lambda}(i^\bullet)  \cong I_{\bar\Lambda}(i^\bullet)\cong (I_{\Lambda}(i))^{\bullet}.\]

In particular, applying the map $G$ and then $\nu_{\bar\Lambda}$ to the projective presentation of $M$ yields 
\begin{equation}\label{eq:g2}
\xymatrix{0\ar[r] & \tau_{\bar\Lambda} G(M) \ar[r] & (I^1_{\Lambda})^\bullet \ar[r]^{\nu_{\bar\Lambda} G(f)} & (I^0_{\Lambda})^\bullet }.
\end{equation}
Moreover, it is easy to see that the map $\nu_{\bar\Lambda} G(f)$ is the same as the map $(\nu_{\Lambda} f)^\bullet$. This shows that the sequence~\eqref{eq:g2} is obtained from ~\eqref{eq:g1} by applying the functor $(-)^\bullet$, and therefore we conclude that  $\tau_{\bar\Lambda} G(M) \cong (\tau_{\Lambda} M)^\bullet$. 
\end{proof}

Next we show that the map $G$ preserves $\tau$-rigidity. 

\begin{prop}\label{prop:g}
The pair $(M, P_{\Lambda})$ is $\tau$-rigid in $\mod\,\Lambda$ if and only if the pair $G(M, P_{\Lambda}):=(G(M), G(P_{\Lambda}))$ is $\tau$-rigid in $\mod\,\bar\Lambda$. 
\end{prop}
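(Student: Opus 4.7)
The plan is to prove both implications simultaneously by establishing two isomorphisms
\[
\Hom_{\bar\Lambda}(G(M),\tau_{\bar\Lambda}G(M)) \cong \Hom_{\Lambda}(M,\tau_{\Lambda}M), \qquad
\Hom_{\bar\Lambda}(G(P_{\Lambda}),G(M)) \cong \Hom_{\Lambda}(P_{\Lambda},M),
\]
from which the biconditional statement follows immediately by the definition of a $\tau$-rigid pair.

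For the first isomorphism, I would use Lemma~\ref{lem:g1} to replace $\tau_{\bar\Lambda}G(M)$ by $(\tau_{\Lambda}M)^\bullet$. Then I would apply $\Hom_{\bar\Lambda}(-,(\tau_{\Lambda}M)^\bullet)$ to the short exact sequence
\[
0 \to \ker\rho_M \to G(M) \to M^\bullet \to 0
\]
from Lemma~\ref{lem:g0}(b), yielding a left-exact sequence
\[
0 \to \Hom_{\bar\Lambda}(M^\bullet,(\tau_\Lambda M)^\bullet) \to \Hom_{\bar\Lambda}(G(M),(\tau_\Lambda M)^\bullet) \to \Hom_{\bar\Lambda}(\ker\rho_M,(\tau_\Lambda M)^\bullet).
\]
The rightmost term vanishes because $\ker\rho_M\in\mod\,\Lambda$ and $(\tau_\Lambda M)^\bullet\in\mod\,\Lambda^\bullet$ have disjoint support. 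The leftmost term is $\Hom_\Lambda(M,\tau_\Lambda M)$ via the isomorphism of categories $(-)^\bullet\colon\mod\,\Lambda\to\mod\,\Lambda^\bullet$, giving the first isomorphism.

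For the second isomorphism, I would first apply $\Hom_{\bar\Lambda}(G(P_{\Lambda}),-)$ to the same short exact sequence. The term $\Hom_{\bar\Lambda}(G(P_{\Lambda}),\ker\rho_M)$ vanishes since the top of $G(P_{\Lambda})$ (a sum of $P_{\bar\Lambda}(i^\bullet)$'s) lies in $\mod\,\Lambda^\bullet$ while $\ker\rho_M\in\mod\,\Lambda$, and the $\Ext^1$ term vanishes since $G(P_\Lambda)$ is projective. Hence $\Hom_{\bar\Lambda}(G(P_{\Lambda}),G(M))\cong \Hom_{\bar\Lambda}(G(P_{\Lambda}),M^\bullet)$. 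I would then apply $\Hom_{\bar\Lambda}(-,M^\bullet)$ to the short exact sequence
\[
0 \to I_\Lambda(\text{summands}) \to G(P_{\Lambda}) \to (P_{\Lambda})^\bullet \to 0
\]
coming from Lemma~\ref{lem:proj}(b); the rightmost Hom vanishes again by disjoint support, producing $\Hom_{\bar\Lambda}(G(P_{\Lambda}),M^\bullet)\cong\Hom_{\Lambda^\bullet}((P_{\Lambda})^\bullet,M^\bullet)\cong\Hom_\Lambda(P_\Lambda,M)$.

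The main obstacle I anticipate is simply keeping the bookkeeping of supports straight and making sure that every auxiliary $\Ext^1$ vanishes as claimed — the argument rests entirely on the key structural facts that $\Lambda$ and $\Lambda^\bullet$ sit as full subcategories with disjoint simples in $\mod\,\bar\Lambda$, that $G(P_\Lambda)$ is projective (so $\Ext^1$ with it vanishes), and that $\ker\rho_M$ lies entirely in $\mod\,\Lambda$. Once these vanishings are laid out carefully, both isomorphisms fall out by a two-line diagram chase, and the equivalence of $\tau$-rigidity follows with no further work.
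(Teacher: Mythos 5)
Your proposal is correct and follows essentially the same route as the paper: both reduce the statement to the isomorphisms $\Hom_{\bar\Lambda}(G(M),\tau_{\bar\Lambda}G(M))\cong\Hom_{\Lambda}(M,\tau_{\Lambda}M)$ and $\Hom_{\bar\Lambda}(G(P_{\Lambda}),G(M))\cong\Hom_{\Lambda}(P_{\Lambda},M)$ via Lemma~\ref{lem:g1}, the short exact sequence of Lemma~\ref{lem:g0}(b), and disjoint-support vanishing of Hom between $\mod\,\Lambda$ and $\mod\,\Lambda^\bullet$. The only difference is cosmetic: you spell out the final identification $\Hom_{\bar\Lambda}(G(P_{\Lambda}),M^\bullet)\cong\Hom_{\Lambda}(P_{\Lambda},M)$ using the sequence from Lemma~\ref{lem:proj}(b), a step the paper leaves implicit by noting that $G(P_{\Lambda})$ is projective at the dotted vertices.
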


\begin{proof}
Given $M\in\mod\,\Lambda$ and $P_{\Lambda} \in \proj\,\Lambda$, 
consider the following sequence of isomorphisms 
\[\Hom_{\bar\Lambda}(G(M), \tau_{\bar\Lambda}G(M))\cong \Hom_{\bar\Lambda}(G(M), (\tau_{\Lambda}M)^\bullet)\cong \Hom_{\Lambda^\bullet}(M^{\bullet}, (\tau_{\Lambda}M)^\bullet)  \cong \Hom_{\Lambda}(M, \tau_{\Lambda}M),\]
where the first isomorphism follows from Lemma~\ref{lem:g1}.  The second isomorphism follows by applying the functor $\Hom_{\bar\Lambda}(-, (\tau_{\Lambda} M)^\bullet)$ to the short exact sequence in Lemma~\ref{lem:g0}(b) and noting that $\Hom_{\bar\Lambda}(\ker\,\rho_M, (\tau_{\Lambda} M)^\bullet)=0$.  This shows that $M$ is  $\tau$-rigid in $\mod\,\Lambda$ if and only if $G(M)$ is $\tau$-rigid in $\mod\,\bar\Lambda$. 

Moreover, we also have 
\[\Hom_{\bar\Lambda}(G(P_{\Lambda}), G(M))\cong \Hom_{\Lambda^\bullet}((P_{\Lambda})^\bullet, M^\bullet)\cong \Hom_{\Lambda}(P_{\Lambda}, M),\]
where the first isomorphism follows by applying $\Hom_{\bar\Lambda}(G(P_\Lambda), -)$ to the short exact sequence in Lemma~\ref{lem:g0}(b) and noting that $G(P_{\Lambda})$ is a projective module at the dotted vertices of $\bar\Lambda$.  This shows that $(M, P_{\Lambda})$ is a $\tau$-rigid pair in $\mod\,\Lambda$ if and only if $G(M, P_{\Lambda})$ is a $\tau$-rigid pair in $\mod\,\bar\Lambda$. 
\end{proof}

We extend the definition of $G$ to a map $\bar G$ on support $\tau$-tilting modules.  Define 
\begin{align*}
\bar G: \,\, & \stilt\,\Lambda \to \stilt\,\bar\Lambda  \\
& (M, P_{\Lambda})\mapsto (G(M)\oplus \Lambda, G(P_{\Lambda}))\\
\end{align*}
where $\Lambda$ is the direct sum of all indecomposable projective $\Lambda$-modules. The map $\bar G$ is indeed well-defined as shown below. 

\begin{thm}\label{thm:g}
The map $\bar G: \stilt\,\Lambda \to \stilt\,\bar\Lambda$ is an inclusion.  Moreover, $\bar G(M, P_{\Lambda})$ is the Bongartz completion of $G(M, P_{\Lambda})$ in $\mod\,\bar\Lambda$. 
\end{thm}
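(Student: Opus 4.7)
The plan is to prove, in order: (1) the pair $\bar G(M, P_\Lambda) = (G(M) \oplus \Lambda, G(P_\Lambda))$ is a support $\tau$-tilting pair in $\mod \bar\Lambda$; (2) $\bar G$ is injective; and (3) $\bar G(M, P_\Lambda)$ is the Bongartz completion of $(G(M), G(P_\Lambda))$ in $\mod \bar\Lambda$. The order-preservation of $\bar G$, which together with (2) is what makes it a poset inclusion, will fall out for free from the torsion-class description established in (3).

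Step (1) is a checklist. The $\tau$-rigidity of $G(M)$ and the vanishing $\Hom_{\bar\Lambda}(G(P_\Lambda), G(M)) = 0$ come from Proposition~\ref{prop:g}. For the new cross-terms, $\tau_{\bar\Lambda} \Lambda = 0$ because $\Lambda$ is projective in $\bar\Lambda$ by Lemma~\ref{lem:proj}(a), while Lemma~\ref{lem:g1} gives $\tau_{\bar\Lambda} G(M) = (\tau_\Lambda M)^\bullet \in \mod \Lambda^\bullet$; disjoint-support arguments then dispose of the remaining vanishings $\Hom_{\bar\Lambda}(\Lambda, \tau_{\bar\Lambda} G(M)) = 0$ and $\Hom_{\bar\Lambda}(G(P_\Lambda), \Lambda) = 0$. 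The cardinality count $|M| + n + |P_\Lambda| = 2n = |\bar\Lambda|$ uses Lemma~\ref{lem:g0}(a). Step (2) is immediate: $G$ is injective on indecomposables by Lemma~\ref{lem:g0}(a) and the projective-presentation construction, and the $\Lambda$-summands of $\bar G(M, P_\Lambda)$ have tops in $\mod \Lambda$ while the $G(M)$-summands have tops in $\mod \Lambda^\bullet$, so they never interfere when comparing two outputs.

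Step (3) is the main content. I use the standard consequence of the Adachi--Iyama--Reiten bijection between functorially finite torsion classes and support $\tau$-tilting pairs: the Bongartz completion $(T_B, Q)$ of a $\tau$-rigid pair $(X, Q)$ is characterized by $\Fac\, T_B = {}^\perp(\tau_{\bar\Lambda} X) \cap Q^\perp$. Using Lemma~\ref{lem:g1}, it therefore suffices to prove
\[\Fac\,(G(M) \oplus \Lambda) \;=\; {}^\perp\!\big((\tau_\Lambda M)^\bullet\big) \cap G(P_\Lambda)^\perp.\]
The inclusion $\subseteq$ is formal: each factor on the right is a torsion class (as a right-perpendicular category), both factors contain $G(M)$ (by Proposition~\ref{prop:g}) and $\Lambda$ (by disjoint supports), and by step (1) and AIR the set $\Fac\,(G(M) \oplus \Lambda)$ is itself a torsion class, hence the smallest torsion class containing these generators.

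The main obstacle is the reverse inclusion $\supseteq$. My approach is to introduce the torsion pair $(\mod \Lambda, \mod \Lambda^\bullet)$ in $\mod \bar\Lambda$, which exists since $\mod \Lambda$ is closed under quotients and extensions and $\Hom_{\bar\Lambda}(\mod \Lambda, \mod \Lambda^\bullet) = 0$ by disjoint supports; the canonical sequence $0 \to tN \to N \to fN \to 0$ extracts the maximal $\Lambda$-submodule and the $\Lambda^\bullet$-quotient. Given $N$ in the right-hand intersection, write $fN = (N')^\bullet$ for the unique $N' \in \mod \Lambda$ under $\Lambda^\bullet \cong \Lambda$; applying $\Hom_{\bar\Lambda}(-, (\tau_\Lambda M)^\bullet)$ to the canonical sequence and using disjoint supports translates the hypotheses into $N' \in {}^\perp(\tau_\Lambda M) \cap P_\Lambda^\perp = \Fac_\Lambda M$, where the last equality is AIR applied to the support $\tau$-tilting pair $(M, P_\Lambda)$ in $\mod \Lambda$. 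A surjection $M^k \twoheadrightarrow N'$ in $\mod \Lambda$ then lifts to a $\bar\Lambda$-surjection $G(M)^k \twoheadrightarrow (M^\bullet)^k \twoheadrightarrow fN$ via Lemma~\ref{lem:g0}(b), so $fN \in \Fac\, G(M)$; and $tN \in \mod \Lambda = \Fac\, \Lambda$. Since $\Fac\,(G(M) \oplus \Lambda)$ is a torsion class by step (1), it is closed under extensions, so $N \in \Fac\,(G(M) \oplus \Lambda)$. The same torsion-pair description finally makes order-preservation transparent: $(M, P_\Lambda) \leq (M', P_\Lambda')$ in $\stilt\, \Lambda$ gives $\Fac_\Lambda M \subseteq \Fac_\Lambda M'$, and this inclusion transfers through the correspondence $N \leftrightarrow N'$ to $\Fac\,(G(M) \oplus \Lambda) \subseteq \Fac\,(G(M') \oplus \Lambda)$, i.e., $\bar G(M, P_\Lambda) \leq \bar G(M', P_\Lambda')$.
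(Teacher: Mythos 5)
Your argument is correct and in fact proves a bit more than the statement requires. Steps (1) and (2) coincide with the paper's proof: the summand count via Lemma~\ref{lem:g0}(a), $\tau$-rigidity of $G(M)\oplus\Lambda$ via Proposition~\ref{prop:g}, Lemma~\ref{lem:g1} and disjoint-support arguments, and injectivity from the injectivity of $G$ together with the fact that its image contains no nonzero $\Lambda$-summands. The divergence is in the Bongartz-completion claim. The paper dispatches it in one line: $\Lambda$ lies in the torsion class ${}^\perp(\tau_{\bar\Lambda}G(M))\cap G(P_\Lambda)^\perp$ of the Bongartz completion of $G(M,P_\Lambda)$ and, being projective, is Ext-projective there, hence a direct summand of that completion, so counting indecomposable summands forces the completion to equal $(G(M)\oplus\Lambda,\,G(P_\Lambda))$. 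You instead compute the torsion class outright, proving $\Fac\,(G(M)\oplus\Lambda)={}^\perp(\tau_{\bar\Lambda}G(M))\cap G(P_\Lambda)^\perp$ by means of the torsion pair $(\mod\,\Lambda,\mod\,\Lambda^\bullet)$ in $\mod\,\bar\Lambda$ and the AIR equality $\Fac\, M={}^\perp(\tau_\Lambda M)\cap P_\Lambda^\perp$ for the support $\tau$-tilting pair $(M,P_\Lambda)$ over $\Lambda$. Your route is longer, but it buys an explicit description of the torsion class of $\bar G(M,P_\Lambda)$, from which order-preservation of $\bar G$ falls out immediately; the paper's proof of Theorem~\ref{thm:g} does not touch the partial order and only recovers the poset statements later, via Jasso's reduction (Theorem~\ref{thm:red}) in Proposition~\ref{prop:FM} and Theorem~\ref{cor:g}. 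Two routine points you should spell out: the assertion that $(\mod\,\Lambda,\mod\,\Lambda^\bullet)$ is a torsion pair with the stated canonical sequence uses that, for the triangular matrix algebra $\bar\Lambda$, the non-dotted part of any module is a submodule whose quotient lies in $\mod\,\Lambda^\bullet$; and transferring the hypothesis $N\in G(P_\Lambda)^\perp$ into $N'\in P_\Lambda^\perp$ requires applying $\Hom_{\bar\Lambda}(G(P_\Lambda),-)$ (exact, since $G(P_\Lambda)$ is projective, and vanishing on $tN$ by disjoint supports) to the canonical sequence, not only the functor $\Hom_{\bar\Lambda}(-,(\tau_\Lambda M)^\bullet)$; both verifications are immediate.
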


\begin{proof}
First, we show that the map $\bar G$ is well-defined.  Note that by Lemma~\ref{lem:g0}(b) the map $G$ is injective and its image is not a $\Lambda$-module apart from the zero module.  Moreover, by Lemma~\ref{lem:g0}(a) $G$ preserves the number of indecomposable summands.  Thus $\bar G(M, P_{\Lambda})$ has the desired number of indecomposable summands since
\[ \left|  G(M) \oplus G(P_{\Lambda})\oplus \Lambda \right| = \left| M\oplus P_{\Lambda}\right| +\left| \Lambda\right| =2\left| \Lambda\right| = \left| \bar\Lambda\right|.\]
Now, we show that  $\bar G(M, P_{\Lambda})$ is $\tau$-rigid.  Note that $\Lambda$ is a projective $\bar\Lambda$-module so $\tau_{\bar\Lambda}  (\Lambda_{\Lambda})=0$.  By Proposition~\ref{prop:g} we know that $(G(M), G(P_{\Lambda}))$ is $\tau$-rigid in $\mod\,\bar\Lambda$, so it suffices to show that adding $\Lambda$ as a direct summand to $G(M)$ preserves $\tau$-rigidity.  In particular, we need to show that 
\[ \Hom_{\bar\Lambda} (G(P_{\Lambda}), \Lambda) = 0=\Hom_{\bar\Lambda}(\Lambda, \tau_{\bar\Lambda}G(M)).\]
The first equality follows because $G(P_{\Lambda})$ is a projective at the dotted vertices of $\bar\Lambda$ and $\Lambda$ is not supported at the dotted vertices.  The second equality follows similarly after applying Lemma~\ref{lem:g1}.  This shows that $\bar G(M, P_{\Lambda})$ is $\tau$-rigid.  

Moreover, it is easy to see that $\bar G$ is an inclusion since $G$ is injective and its image does not contain nonzero summands of $\Lambda$. Lastly, $\bar G$ is the Bongartz completion of $G(M, P_{\Lambda})$, because $\Lambda$ is projective in $\mod\,\bar\Lambda$.   
\end{proof}

Next, we describe the image of $\bar G$ as the set of support $\tau$-tilting $\bar\Lambda$-modules containing $\Lambda$, that is, the Bongartz interval of $\Lambda$. 

\begin{thm}\label{cor:g}
The map $\bar{G}: \stilt\,\Lambda\to \stilt_{\Lambda}\,\bar\Lambda$ is an isomorphism of posets.
\end{thm}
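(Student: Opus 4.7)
The plan is to derive the result from Jasso's $\tau$-tilting reduction (Theorem~\ref{thm:red}) applied to the $\tau$-rigid pair $(\Lambda,0)$ in $\mod\,\bar\Lambda$. Specifically, I will show that Jasso's isomorphism $\Phi:\stilt_\Lambda\,\bar\Lambda\xrightarrow{\sim}\stilt\,\Lambda^\bullet$, precomposed with $\bar G$, recovers the tautological identification $\Psi:\stilt\,\Lambda\xrightarrow{\sim}\stilt\,\Lambda^\bullet$ induced by the algebra isomorphism $\Lambda\xrightarrow{\sim}\Lambda^\bullet$. Since both $\Phi$ and $\Psi$ are poset isomorphisms, this will force $\bar G=\Phi^{-1}\circ\Psi$ to be one as well.

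First, I would compute the data entering Theorem~\ref{thm:red}. As a right $\bar\Lambda$-module, $\bar\Lambda\cong\Lambda\oplus\bigoplus_i P_{\bar\Lambda}(i^\bullet)$, so $\bar\Lambda$ is itself a tilting $\bar\Lambda$-module containing $\Lambda$ as a summand. It is the maximum element of $\stilt_\Lambda\,\bar\Lambda$ because $\Fac\,\bar\Lambda=\mod\,\bar\Lambda$ contains every other torsion class, so the Bongartz completion of $(\Lambda,0)$ in $\mod\,\bar\Lambda$ is $(\bar\Lambda,0)$. Hence $B_\Lambda=\End_{\bar\Lambda}(\bar\Lambda)\cong\bar\Lambda$, and the idempotent $e_\Lambda$ corresponding to $\Hom_{\bar\Lambda}(\bar\Lambda,\Lambda)\cong\Lambda$ is the sum of primitive idempotents at the non-dotted vertices. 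A direct calculation in the matrix presentation of $\bar\Lambda$ shows $C_\Lambda:=\bar\Lambda/\langle e_\Lambda\rangle\cong\Lambda^\bullet$. Moreover, since $\Lambda=\bigoplus_i P_{\bar\Lambda}(i)$, the torsion pair $(\Fac\,\Lambda,\Lambda^\perp)$ in $\mod\,\bar\Lambda$ is exactly $(\mod\,\Lambda,\mod\,\Lambda^\bullet)$, and Theorem~\ref{thm:red} produces the claimed $\Phi$.

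It remains to verify that $\Phi\circ\bar G=\Psi$. For $(M,P_\Lambda)\in\stilt\,\Lambda$, the module part of $\bar G(M,P_\Lambda)$ is $G(M)\oplus\Lambda$. Since $\Lambda$ lies entirely in the torsion class $\mod\,\Lambda$ its torsion-free part vanishes, while Lemma~\ref{lem:g0}(b) identifies the canonical sequence of $G(M)$ with respect to $(\mod\,\Lambda,\mod\,\Lambda^\bullet)$, so that $fG(M)=M^\bullet$; hence $f(G(M)\oplus\Lambda)=M^\bullet$, matching the module part of $\Psi(M,P_\Lambda)=(M^\bullet,P_\Lambda^\bullet)$. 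Because a support $\tau$-tilting pair is determined by its module component, this equality of module parts forces $\Phi\circ\bar G=\Psi$, completing the proof. The main technical step is the precise identification $C_\Lambda\cong\Lambda^\bullet$; once this is in hand, the matching of $\Phi$ and $\Psi$ via the bullet functor is immediate from the definition of $\bar G$ and Lemma~\ref{lem:g0}(b).
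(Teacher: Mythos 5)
Your proof is correct, but it takes a genuinely different route from the paper's. The paper proves Theorem~\ref{cor:g} by a direct surjectivity argument: given $(N\oplus\Lambda, P_{\bar\Lambda})\in\stilt_{\Lambda}\,\bar\Lambda$, it uses $\Hom_{\bar\Lambda}(\Lambda,\tau_{\bar\Lambda}N)=0$ and the Nakayama functor to show that a minimal projective presentation of $N$ involves only projectives at the dotted vertices, and then lifts that presentation through $G$ to produce an explicit preimage. You instead invoke Jasso's reduction (Theorem~\ref{thm:red}) at the $\tau$-rigid pair $(\Lambda,0)$ in $\mod\,\bar\Lambda$: the Bongartz completion is $(\bar\Lambda,0)$, $\End_{\bar\Lambda}(\bar\Lambda)\cong\bar\Lambda$, the quotient by $\langle e_\Lambda\rangle$ is $\Lambda^\bullet$, the relevant torsion pair is $(\mod\,\Lambda,\mod\,\Lambda^\bullet)$, and Lemma~\ref{lem:g0}(b) identifies the canonical sequence of $G(M)$ so that $fG(M)\cong M^\bullet$; matching with the tautological isomorphism $\stilt\,\Lambda\cong\stilt\,\Lambda^\bullet$ and using that a support $\tau$-tilting pair is determined by its module part gives $\bar G=\Phi^{-1}\circ\Psi$. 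Your argument quietly relies on Theorem~\ref{thm:g} (so that $\bar G(M,P_\Lambda)$ actually lies in $\stilt_{(\Lambda,0)}\,\bar\Lambda$ and $\Phi$ may be applied to it), but that theorem precedes the statement, so this is legitimate and there is no circularity (Proposition~\ref{prop:FM} does not use Theorem~\ref{cor:g}). What each approach buys: the paper's argument is elementary and exhibits preimages explicitly, whereas yours imports the reduction machinery once — in the same spirit as the paper's later Proposition~\ref{prop:FM}, but reducing at $(\Lambda,0)$ rather than at $G(M,P_\Lambda)$ — and in exchange the full poset isomorphism (order preservation and reflection, plus surjectivity) comes for free from Theorem~\ref{thm:red}, a point the paper's own proof of Theorem~\ref{cor:g} leaves largely implicit.
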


\begin{proof}
By Theorem~\ref{thm:g}, the image $\bar{G}(\stilt\,\Lambda)$ is contained in the Bongartz interval  of $\Lambda$ in $\mod\,\bar\Lambda$.  To show the reverse inclusion, let $(N\oplus \Lambda, P_{\bar\Lambda})$ be a support $\tau$-tilting pair in $\mod\,\bar\Lambda$.  By Proposition~\ref{prop:g}, it suffices to show that $(N\oplus \Lambda, P_{\bar\Lambda})=\bar G(M, P_{\Lambda})$ for some $M\in\mod\,\Lambda$ and $P_{\Lambda}\in \proj\,\Lambda$.  First, observe that since $\Hom_{\bar\Lambda}( P_{\bar\Lambda}, \Lambda)=0$, then $P_{\bar\Lambda}$ is a projective at the dotted vertices of $\bar\Lambda$, and hence $P_{\bar\Lambda}$ is in the image of $G$.  Therefore, it suffices to show that $N$ is also in the image of $G$.  

 Let  
\[P^1_{\bar\Lambda}\xrightarrow{\bar f} P^0_{\bar\Lambda}\to N \to 0\]
be a projective presentation of $N$ in $\mod\,\bar\Lambda$.  Applying the Nakayama functor $\nu_{\bar\Lambda}$ we obtain an exact sequence
\[0\to \tau_{\bar\Lambda}N \to \nu_{\bar\Lambda} P^1_{\bar\Lambda}\to \nu_{\bar\Lambda} P^0_{\bar\Lambda}.\]
Since $N\oplus \Lambda$ is $\tau$-rigid, it follows that $\Hom_{\bar\Lambda}(\Lambda, \tau_{\bar\Lambda} N)=0$, so in particular $\tau_{\bar\Lambda} N$ is only supported at the dotted vertices of $\bar\Lambda$, so $\tau_{\bar\Lambda} N\in \mod\,\Lambda^\bullet$.  The sequence above then implies that $\nu P^1_{\bar\Lambda}, \nu P^0_{\bar\Lambda}$ are both injective modules at the dotted vertices, and hence $P^1_{\bar\Lambda}, P^0_{\bar\Lambda}$ are both projectives at the dotted vertices of $\bar\Lambda$.  Let $G(P^1_{\Lambda})=P^1_{\bar\Lambda}, G(P^0_{\Lambda})=P^0_{\bar\Lambda}$ for $P^1_{\Lambda}, P^0_{\Lambda}\in\proj\,\Lambda$.  Also, let $f: P^1_{\Lambda}\to P^0_{\Lambda}$ be the map such that $G(f)=\bar f$.   Define $M$ to be the cokernel of $f$.  Note that since $\bar f$ is a projective presentation of $N$ then $f$ is a projective presentation of $M$.  Then by definition of $G$, we have that $G(M)\cong N$.  This shows that $N$ is in the image of $G$ as desired. 
\end{proof}

\begin{example}\label{ex:4}
Let $\Lambda$ be as in Example~\ref{ex:2}.  Its $\tau$-tilting poset is shown on the left while its image under $\bar G$ is shown on the right. 
\[ \scalebox{.7}{\xymatrix@C=10pt@R=17pt{
&\left( {\begin{matrix}1\\2 \end{matrix}}\oplus 2, 0\right)\ar@{-}[ddr]\ar@{-}[dl] \\
 \left({\begin{matrix} 1\\2 \end{matrix}} \oplus 1, 0 \right) \ar@{-}[dd]\\
 && \left(2, P(1)\right) \ar@{-}[ddl] && \ar[rr]^{\scalebox{1.2}{$\bar G$}}&&\\
(1, P(2)) \ar@{-}[dr]  \\
&(0,P(1)\oplus P(2))
}}
\hspace*{.2cm}\scalebox{.7}{\xymatrix@C=2pt@R=7pt{
&\left( {\begin{matrix}1^\bullet\\2^\bullet\\1\,\, \end{matrix}}\oplus {\begin{matrix}2^\bullet\\1\,\,\\2\,\, \end{matrix}} \oplus \Lambda, 0\right)\ar@{-}[ddr]\ar@{-}[dl] \\
 \left({\begin{matrix}1^\bullet\\2^\bullet\\1\,\, \end{matrix}} \oplus 1^\bullet \oplus \Lambda, 0 \right) \ar@{-}[dd]\\
 && \left( {\begin{matrix}2^\bullet\\1\,\,\\2\,\, \end{matrix}} \oplus \Lambda, P_{\bar\Lambda}(1^\bullet)\right) \ar@{-}[ddl] \\
(1^\bullet \oplus \Lambda , P_{\bar\Lambda}(2^\bullet)) \ar@{-}[dr]  \\
&(\Lambda, P_{\bar\Lambda}(1^\bullet)\oplus P_{\bar\Lambda}(2^\bullet))
}}
\]
\end{example}

Recall that by Theorem~\ref{thm:f} the image of the map $\bar F$ is isomorphic to  $\stilt\,\Lambda$.  Theorem~\ref{cor:g} above shows similarly that the image of  $\bar G$ is the Bongartz interval of  the projective module $\Lambda$ which is also isomorphic to  $\stilt\,\Lambda$.  Note that these two images overlap only at the top element $(\bar\Lambda,0)$. Now we apply $\tau$-tilting reduction to conclude that $\stilt\,\Lambda \times \stilt\,\Lambda$ is a subposet of $\stilt\,\bar\Lambda$.  In particular, $\stilt\,\bar\Lambda$ contains many Bongartz intervals isomorphic to $\stilt\,\Lambda$.

\begin{prop}\label{prop:FM}
Let $(M, P_{\Lambda})$ be a support $\tau$-tilting pair in $\textup{mod}\,\Lambda$. Then there is a poset isomorphism 
\[\bar F_M: \stilt\,\Lambda \to \stilt_{G(M,P)}\bar\Lambda\] 
such that if $\bar F_M (M', P_{\Lambda}')=(N, P_{\bar\Lambda}')$ then $M' \cong fN$ is the torsion free part of $N$ with respect to the torsion pair $(\Fac (G(M)), G(M)^\perp)$ in $\textup{mod}\,\bar\Lambda$. 
\end{prop}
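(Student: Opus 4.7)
My plan is to derive this from Jasso's $\tau$-tilting reduction (Theorem~\ref{thm:red}) applied to the $\tau$-rigid pair $G(M, P_{\Lambda}) = (G(M), G(P_{\Lambda}))$ in $\mod\,\bar\Lambda$, which is $\tau$-rigid by Proposition~\ref{prop:g}. By Theorem~\ref{thm:g} its Bongartz completion is $\bar G(M, P_{\Lambda}) = (G(M) \oplus \Lambda, G(P_{\Lambda}))$. Setting $T = G(M) \oplus \Lambda$, $B = \End_{\bar\Lambda}(T)$, and $C = B/\langle e_{G(M)}\rangle$, Theorem~\ref{thm:red} then produces a poset isomorphism $\stilt_{G(M,P_{\Lambda})}\bar\Lambda \cong \stilt\,C$ sending $N \mapsto \Hom_{\bar\Lambda}(T, fN)$, where $fN$ is the torsion free part of $N$ with respect to $(\Fac\,G(M), G(M)^\perp)$. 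I will then define $\bar F_M$ to be the inverse of this composite, provided I can identify $C$ with $\Lambda$.

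The central calculation is $C \cong \Lambda$. By Definition~\ref{def:G}, $G(M)$ is a quotient of $G(P^0_{\Lambda}) = \bigoplus_i P_{\bar\Lambda}(i^\bullet)^{m_i}$, so $\tp\,G(M)$ is supported on the dotted vertices of $\bar\Lambda$, while $\Lambda = \bigoplus_i P_{\Lambda}(i)$ is supported only on the undotted vertices. Any morphism $G(M) \to \Lambda$ therefore vanishes on $\tp\,G(M)$, forcing $\Hom_{\bar\Lambda}(G(M), \Lambda) = 0$. Consequently $B$ is upper triangular of the form $\bigl(\begin{smallmatrix} \End_{\bar\Lambda}(G(M)) & \Hom_{\bar\Lambda}(\Lambda, G(M)) \\ 0 & \End_{\bar\Lambda}(\Lambda) \end{smallmatrix}\bigr)$, and the two-sided ideal $B e_{G(M)} B$ consists of precisely its top row: any product factoring through $G(M)$ whose target is $\Lambda$ lands in $\Hom_{\bar\Lambda}(G(M),\Lambda)=0$. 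Passing to the quotient leaves $C \cong \End_{\bar\Lambda}(\Lambda) \cong \Lambda$, which, composed with Jasso's isomorphism, yields the desired $\stilt_{G(M,P_{\Lambda})}\bar\Lambda \cong \stilt\,\Lambda$.

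For the description $M' \cong fN$, I unwind Jasso's formula. Since $fN \in G(M)^\perp$, we have $\Hom_{\bar\Lambda}(G(M), fN) = 0$, so $\Hom_{\bar\Lambda}(T, fN) \cong \Hom_{\bar\Lambda}(\Lambda, fN)$; under the identification $C \cong \End_{\bar\Lambda}(\Lambda) \cong \Lambda$ this becomes $fN$ regarded as a $\Lambda$-module via restriction along $\Lambda \hookrightarrow \bar\Lambda$. To match the statement verbatim I also need $fN \in \mod\,\Lambda$: this is immediate at the endpoint $N = G(M) \oplus \Lambda$ because $\Hom_{\bar\Lambda}(G(M),\Lambda)=0$ forces $tN = G(M)$ and $fN = \Lambda$, and it propagates across the interval by the mutation compatibility of $\tau$-tilting reduction. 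The main obstacle is exactly this bookkeeping collapse from $\mod\,\bar\Lambda$ down to $\mod\,\Lambda$; once the vanishing $\Hom_{\bar\Lambda}(G(M), \Lambda) = 0$ is in hand, both the identification $C \cong \Lambda$ and the description of the reduction as $N \mapsto fN$ drop out, and the proposition follows from Theorem~\ref{thm:red} without further effort.
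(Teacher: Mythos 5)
Your overall route is the same as the paper's: apply Jasso's reduction (Theorem~\ref{thm:red}) to the $\tau$-rigid pair $G(M,P_{\Lambda})$, use Theorem~\ref{thm:g} to identify its Bongartz completion as $\bar G(M,P_{\Lambda})=(G(M)\oplus\Lambda,G(P_{\Lambda}))$, compute $C_{G(M,P_{\Lambda})}\cong\End_{\bar\Lambda}(\Lambda)\cong\Lambda$ from the vanishing $\Hom_{\bar\Lambda}(G(M),\Lambda)=0$, and then unwind the formula $N\mapsto\Hom_{\bar\Lambda}(G(M)\oplus\Lambda,fN)$. All of that is correct, and your triangular-matrix description of $\End_{\bar\Lambda}(G(M)\oplus\Lambda)$ is a slightly more explicit version of the paper's ``it is easy to see'' step.

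The one genuine gap is the claim that $fN\in\mod\,\Lambda$ for \emph{every} $N$ in the Bongartz interval. You check it only at the top element $N=G(M)\oplus\Lambda$ and then assert it ``propagates across the interval by the mutation compatibility of $\tau$-tilting reduction.'' Mutation compatibility only says that the bijection of Theorem~\ref{thm:red} sends cover relations to cover relations; it gives no control over where the torsion-free parts live, and you set up no induction along mutations that would transfer the property from one element of the interval to an adjacent one. Without $fN\in\mod\,\Lambda$ the identification $\Hom_{\bar\Lambda}(\Lambda,fN)\cong fN$ breaks down (that Hom only recovers the restriction of $fN$ to the undotted vertices), so the asserted description $M'\cong fN$ is not established. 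The fix is short and is exactly what the paper does: since the Bongartz completion $(G(M)\oplus\Lambda,G(P_{\Lambda}))$ is the largest element of $\stilt_{G(M,P_{\Lambda})}\bar\Lambda$, every $N$ in the interval satisfies $N\in\Fac\,(G(M)\oplus\Lambda)$; hence $fN$, being a quotient of $N$, is a quotient of some $(G(M)\oplus\Lambda)^{k}$, and since $\Hom_{\bar\Lambda}(G(M),fN)=0$ the $G(M)$-components map to zero, so $fN\in\Fac\,\Lambda\subseteq\mod\,\Lambda$ and then $M'\cong\Hom_{\bar\Lambda}(\Lambda,fN)\cong fN$. With that step inserted, your argument coincides with the paper's proof.
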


\begin{proof}
Let $(M, P_{\Lambda})$ be a support $\tau$-tilting pair in $\mod\,\Lambda$.  By Theorem~\ref{thm:g} $\bar G(M, P_{\Lambda})$ is a support $\tau$-tilting pair in $\mod\,\bar\Lambda$ and is the Bongartz completion of $G(M, P_{\Lambda})$.  We claim that the  Bongartz interval of $G(M, P_{\Lambda})$ is isomorphic to $\stilt\,\Lambda$.  By $\tau$-tilting reduction, see Theorem~\ref{thm:red}, the Bongartz interval of $G(M, P_{\Lambda})$ is isomorphic to the support $\tau$-tilting poset of the algebra 
\[C_{G(M, P_{\Lambda})}=\End_{\bar\Lambda}(G(M)\oplus \Lambda)/\left<e_{G(M)}\right>\]
 where $e_{G(M)}$ is the idempotent in the endomorphism algebra corresponding to $G(M)$.  It is then easy to see that since the top of $G(M)$ is at the dotted vertices then 
\[C_{G(M, P_{\Lambda})}=\End_{\bar\Lambda}(G(M)\oplus \Lambda)/\left<e_{G(M)}\right>\cong \End _{\Lambda}(\Lambda)\cong \Lambda.\]
This shows our claim that the  Bongartz interval of $G(M, P_{\Lambda})$ is isomorphic to $\stilt\,\Lambda$.  Now, define $\bar F_M$ to be the inverse of the map as from Theorem~\ref{thm:red} that sends support $\tau$-tilting pairs in $\mod\,\Lambda$ to their associated elements in the Bongartz interval of $G(M, P_{\Lambda})$.  

It remains to show that if $\bar F_M (M', P_{\Lambda}')=(N, P_{\bar\Lambda}')$ then $M' \cong fN$ is the torsion free part of $N$ with respect to the torsion pair $(\Fac (G(M)), G(M)^\perp)$ in $\textup{mod}\,\bar\Lambda$.  Given $(N, P_{\bar\Lambda}') \in \stilt_{G(M,P)}\bar\Lambda$, Theorem~\ref{thm:red} implies that $M'\cong \Hom_{\bar \Lambda}(G(M)\oplus \Lambda, fN)$ where $fN$ is defined by the torsion pair $(\Fac (G(M)), G(M)^\perp)$.  Since $N\in \Fac(G(M)\oplus \Lambda)$ it follows that $fN\in \Fac \,\Lambda$.  In particular $fN\in \mod\,\Lambda$ and \[M'\cong \Hom_{\bar \Lambda}(G(M)\oplus \Lambda, fN)\cong \Hom_{\bar \Lambda}(\Lambda, fN)\cong fN.\]
\end{proof}

Now we take the maps $\bar F_M$ for all possible support $\tau$-tilting pairs $(M, P_{\Lambda})$ to obtain an inclusion of posets $\stilt\,\Lambda \times \stilt\,\Lambda  \to \stilt\,\bar\Lambda$. 

\begin{corollary}\label{cor:cross}
There is an inclusion of posets 
\begin{align*}
\varphi: \stilt\,\Lambda \times \stilt\,\Lambda & \to \stilt\,\bar\Lambda\\
 ((M, P_{\Lambda}), (M', P_{\Lambda}'))& \mapsto \bar F_M (M', P_{\Lambda}').
 \end{align*}
\end{corollary}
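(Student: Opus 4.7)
The plan is to verify that $\varphi$ is well-defined, injective, and order-preserving.

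Well-definedness is immediate: by Proposition~\ref{prop:FM}, for each support $\tau$-tilting pair $(M, P_\Lambda)$ the map $\bar F_M$ is a poset isomorphism $\stilt\,\Lambda \to \stilt_{G(M,P_\Lambda)}\,\bar\Lambda \subseteq \stilt\,\bar\Lambda$, so $\varphi$ is defined.

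For injectivity, suppose $\bar F_{M_1}(M_1',P_1') = \bar F_{M_2}(M_2',P_2') = (N,Q)$. Proposition~\ref{prop:FM} places $(N,Q)$ in both Bongartz intervals $\stilt_{G(M_i,P_i)}\,\bar\Lambda$, so $G(M_i) \in \add\,N$ and $G(P_i) \in \add\,Q$ for $i=1,2$. Forming basic unions summand-wise and using the additivity of $G$ on indecomposables, the pair $(G(M_1 \vee M_2), G(P_1 \vee P_2))$ is a summand pair of $(N,Q)$ and hence $\tau$-rigid in $\mod\,\bar\Lambda$. By Proposition~\ref{prop:g} its preimage $(M_1 \vee M_2, P_1 \vee P_2)$ is $\tau$-rigid in $\mod\,\Lambda$, giving $|M_1 \vee M_2|+|P_1 \vee P_2| \le |\Lambda|$. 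Since $|M_i \oplus P_i|=|\Lambda|$ and the numbers of indecomposable summands of $M_1 \vee M_2$ (resp.\ $P_1 \vee P_2$) are at least $|M_i|$ (resp.\ $|P_i|$), equality throughout forces $M_i = M_1 \vee M_2$ and $P_i = P_1 \vee P_2$ for $i = 1,2$, hence $(M_1,P_1) = (M_2,P_2)$. Then $(M_1',P_1') = (M_2',P_2')$ follows from $\bar F_{M_1}$ being a bijection.

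For order preservation, given $((M_1,P_1),(M_1',P_1')) \le ((M_2,P_2),(M_2',P_2'))$, the plan is to factor through the intermediate pair $((M_1,P_1),(M_2',P_2'))$. The comparison $\bar F_{M_1}(M_1',P_1') \le \bar F_{M_1}(M_2',P_2')$ is immediate because $\bar F_{M_1}$ is a poset isomorphism. For the second comparison $\bar F_{M_1}(M_2',P_2') \le \bar F_{M_2}(M_2',P_2')$, set $(N_2,Q_2):=\bar F_{M_2}(M_2',P_2')$; the summand inclusion $(G(M_1),G(P_1)) \subseteq (G(M_2),G(P_2))$ reverses to $\stilt_{G(M_2,P_2)}\,\bar\Lambda \subseteq \stilt_{G(M_1,P_1)}\,\bar\Lambda$, so $\bar F_{M_1}^{-1}(N_2,Q_2)$ is defined, with first coordinate $f_1 N_2$ (the torsion-free part for $(\Fac G(M_1), G(M_1)^\perp)$) by Proposition~\ref{prop:FM}. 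Assuming the torsion-class containment $\Fac G(M_1) \subseteq \Fac G(M_2)$, the canonical sequences yield $t_1 N_2 \subseteq t_2 N_2$ inside $N_2$, so $M_2' = f_2 N_2$ is a quotient of $f_1 N_2$; hence $(M_2',P_2') \le \bar F_{M_1}^{-1}(N_2,Q_2)$ in $\stilt\,\Lambda$, and applying $\bar F_{M_1}$ delivers the claimed inequality.

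The hard part is the torsion-class containment $\Fac G(M_1) \subseteq \Fac G(M_2)$ whenever $M_1 \in \Fac M_2$. The plan is to promote $G$ to a functor on morphisms: the isomorphism $\Hom_{\bar\Lambda}(P_{\bar\Lambda}(i^\bullet),P_{\bar\Lambda}(j^\bullet)) \cong e_j \Lambda e_i \cong \Hom_\Lambda(P_\Lambda(i),P_\Lambda(j))$ identifies morphisms between projectives on the two sides, so $G$ lifts morphisms of projective presentations and evidently preserves surjections between projectives. A surjection $M_2^k \twoheadrightarrow M_1$ witnessing $M_1 \in \Fac M_2$ therefore lifts under $G$ to a surjection $G(M_2)^k = G(M_2^k) \twoheadrightarrow G(M_1)$, giving $G(M_1) \in \Fac G(M_2)$ and, via closure of $\Fac$ under quotients and direct sums, the desired containment.
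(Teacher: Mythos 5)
Your well-definedness and injectivity arguments are fine and essentially coincide with the paper's (the paper also shows the Bongartz intervals $\stilt_{G(M,P_{\Lambda})}\bar\Lambda$ and $\stilt_{G(M',P'_{\Lambda})}\bar\Lambda$ are disjoint for distinct pairs, via Proposition~\ref{prop:g} and maximality). The problem is in the order-preservation step. Your second comparison starts from ``the summand inclusion $(G(M_1),G(P_1))\subseteq (G(M_2),G(P_2))$'', but no such inclusion follows from $(M_1,P_1)\leq (M_2,P_2)$: the order on $\stilt\,\Lambda$ is $\Fac\,M_1\subseteq \Fac\,M_2$, and since both pairs are complete support $\tau$-tilting pairs with $|\Lambda|$ indecomposable summands, an $\add$-inclusion between them would force $(M_1,P_1)=(M_2,P_2)$. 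Consequently the claimed containment $\stilt_{G(M_2,P_2)}\bar\Lambda\subseteq \stilt_{G(M_1,P_1)}\bar\Lambda$ is false; indeed, by your own injectivity argument these two Bongartz intervals are \emph{disjoint} whenever $(M_1,P_1)\neq(M_2,P_2)$. Hence $(N_2,Q_2)=\bar F_{M_2}(M_2',P_2')$ does not lie in the image of $\bar F_{M_1}$, the expression $\bar F_{M_1}^{-1}(N_2,Q_2)$ is undefined, and the final step ``applying $\bar F_{M_1}$ delivers the claimed inequality'' collapses. So order preservation, which is the only genuinely nontrivial part of the corollary, is not established.

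The correct route (the one the paper takes) avoids inverting $\bar F_{M_1}$ altogether: write $\bar F_{M_1}(M_1',P_1')=(G(M_1)\oplus N_1,\,-)$ where, by Proposition~\ref{prop:FM}, there is a short exact sequence $0\to tN_1\to N_1\to M_1'\to 0$ with $tN_1\in\Fac\,G(M_1)$, and similarly $\bar F_{M_2}(M_2',P_2')=(G(M_2)\oplus N_2,\,-)$ with $M_2'$ a quotient of $N_2$. Then $M_1'\in\Fac\,M_2'\subseteq\Fac\,N_2$ and $G(M_1)\in\Fac\,G(M_2)$, so both $tN_1$ and $M_1'$ lie in $\Fac(G(M_2)\oplus N_2)$; since $G(M_2)\oplus N_2$ is $\tau$-rigid, $\Fac(G(M_2)\oplus N_2)$ is a torsion class, hence closed under extensions, giving $N_1\in\Fac(G(M_2)\oplus N_2)$ and therefore $\Fac(G(M_1)\oplus N_1)\subseteq\Fac(G(M_2)\oplus N_2)$, which is exactly the desired inequality in $\stilt\,\bar\Lambda$. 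Your ``hard part'' lemma --- that $M_1\in\Fac\,M_2$ implies $\Fac\,G(M_1)\subseteq\Fac\,G(M_2)$, proved by noting $G$ is induced by a right-exact functor determined on projectives --- is correct and is a legitimate alternative to the paper's appeal to Theorem~\ref{cor:g} for this ingredient; the gap is only in how you tried to exploit it.
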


\begin{proof}
By Proposition~\ref{prop:FM} the map $\varphi$ is well-defined.  First, we show that $\varphi$ is an inclusion.  By above $\bar F_M$ is an inclusion for a fixed $M$, therefore it suffices to show that the images of $\bar F_M$ and $\bar F_{M'}$ are disjoint for $M\not\cong M'$.  By construction, the image of $\bar F_M$ is the Bongartz interval of $G(M, P_{\Lambda})$, and the image of $\bar F_{M'}$ is the Bongartz interval of $G(M', P'_{\Lambda})$.  Any support $\tau$-tilting pair in the intersection of both of these intervals would contain $G(M\oplus M',   P_{\Lambda}\oplus P'_{\Lambda})$.  In particular, the pair $G(M\oplus M',   P_{\Lambda}\oplus P'_{\Lambda})$ is $\tau$-rigid, so Proposition~\ref{prop:g} implies that $(M\oplus M',   P_{\Lambda}\oplus P'_{\Lambda})$ is $\tau$-rigid in $\mod\,\Lambda$.  However, both $(M, P_{\Lambda}), (M', P'_{\Lambda})$ are support $\tau$-tilting pairs in $\mod\,\Lambda$.  Therefore, $M\cong M', P_{\Lambda}\cong P'_{\Lambda}$.   This shows that the images of $\bar F_M$ and $\bar F_{M'}$ are disjoint, and completes the proof that $\varphi$ is an inclusion. 

Now, we want to show that $\varphi$ respects the partial order.  Let $(M_1, P_1)\leq  (M_3, P_3)$ and $(M_2, P_2)\leq  (M_4, P_4)$ be support $\tau$-tilting pairs in $\mod\,\Lambda$. Then we want to show that $\bar F_{M_1}(M_2, P_2) \leq \bar F_{M_3}(M_4, P_4)$.   By definition of $\varphi$ and Proposition~\ref{prop:FM}, we have $\bar F_{M_1}(M_2, P_2) = (G(M_1)\oplus N_2, P_{\bar\Lambda})$ for some $N_2\in \mod\,\bar\Lambda$ such that there exists a short exact sequence in $\mod\,\bar\Lambda$ 
\[0\to tN_2\to N_2\to M_2\to 0\]
with $tN_2 \in \Fac\, G(M_1)$.  Similarly, $\bar F_{M_3}(M_4, P_4) = (G(M_3)\oplus N_4, P'_{\bar\Lambda})$ for some $N_4\in \mod\,\bar\Lambda$ such that $M_4$ is a quotient of $N_4$.  
Since $(M_2, P_2)\leq  (M_4, P_4)$, then $M_2\in \Fac\, M_4\subset \Fac \, N_4$, and since $(M_1, P_1)\leq  (M_3, P_3)$ then $G(M_1)\in \Fac \,G(M_3)$ by Theorem~\ref{cor:g}.  Thus, $tN_2, M_2 \in \Fac (G(M_3)\oplus N_4)$.  Since $\Fac (G(M_3)\oplus N_4)$ is a torsion class in $\mod\,\bar\Lambda$ it is closed under extensions, hence $N_2 \in \Fac (G(M_3)\oplus N_4)$.   Therefore, we conclude that $\Fac (G(M_1)\oplus N_2)\subset \Fac (G(M_3)\oplus N_4)$, which shows the desired conclusion $\bar F_{M_1}(M_2, P_2) \leq \bar F_{M_3}(M_4, P_4)$.
\end{proof}

Next, we relate the map $\bar F_M$ to the map $\bar F$ from Definition~\ref{def:f}.   Note that the image of $\bar F_{\Lambda}$ is the Bongartz interval of $G(\Lambda)$.  By definition $G(\Lambda)$ is the direct sum of the projective-injective $\bar\Lambda$-modules, which we denote by $Q_{\bar\Lambda}$, and we show below that the Bongartz interval of   $Q_{\bar\Lambda}$ is precisely the tilting poset $\tilt\,\bar\Lambda$.   In particular, we show that the maps $\bar F_{\Lambda}$ and $\bar F$ are equivalent.

\begin{thm}\label{prop:fg}
The poset inclusions $\bar F, \bar F_{\Lambda}: \stilt\,\Lambda \to \stilt\,\bar\Lambda$ are equal. 
In particular, $\bar F: \stilt\,\Lambda \to \tilt\,\bar\Lambda$ is a poset isomorphism. 
\end{thm}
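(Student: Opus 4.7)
The plan is to establish the stronger equality $\bar F=\bar F_{\Lambda}$ as maps from $\stilt\,\Lambda$ into $\stilt\,\bar\Lambda$; granting this, the final assertion follows at once, since Theorem~\ref{thm:f} already identifies $\bar F$ as a bijection onto $\tilt\,\bar\Lambda$ and Proposition~\ref{prop:FM} identifies $\bar F_{\Lambda}$ as a poset isomorphism, so $\bar F:\stilt\,\Lambda\to\tilt\,\bar\Lambda$ inherits the order-preserving property of $\bar F_{\Lambda}$ in both directions.

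I would begin by identifying $G(\Lambda)$ with $Q_{\bar\Lambda}$. Indeed, Definition~\ref{def:G} gives $G(P_{\Lambda}(i))=P_{\bar\Lambda}(i^\bullet)$, and Lemma~\ref{lem:proj}(b) shows each $P_{\bar\Lambda}(i^\bullet)\cong I_{\bar\Lambda}(i)$ is an indecomposable projective-injective, so $G(\Lambda)=Q_{\bar\Lambda}$. Applying Proposition~\ref{prop:FM} to the trivial pair $(\Lambda,0)\in\stilt\,\Lambda$ then yields the poset isomorphism $\bar F_{\Lambda}:\stilt\,\Lambda\to\stilt_{(Q_{\bar\Lambda},0)}\bar\Lambda$, with the key property that the torsion-free part of $\bar F_{\Lambda}(M',P_{\Lambda}')$ with respect to the torsion pair $(\Fac\,Q_{\bar\Lambda},Q_{\bar\Lambda}^\perp)$ recovers $M'$.

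The heart of the proof is then to show (i) that $\bar F(M,P_{\Lambda})\in\stilt_{(Q_{\bar\Lambda},0)}\bar\Lambda$ and (ii) that its torsion-free part with respect to $(\Fac\,Q_{\bar\Lambda},Q_{\bar\Lambda}^\perp)$ equals $M$. For (i), by Theorem~\ref{thm:f} the module $\bar F(M,P_{\Lambda})=F(M)\oplus\Omega^{-1}_{\bar\Lambda}P_{\Lambda}\oplus Q_{\bar\Lambda}$ is tilting, hence support $\tau$-tilting with zero projective part, and it visibly contains $Q_{\bar\Lambda}$ as a summand. For (ii), I would compute summand by summand, using that torsion pairs respect direct sums. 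The summand $Q_{\bar\Lambda}$ lies in $\Fac\,Q_{\bar\Lambda}$ trivially and contributes nothing. The summand $\Omega^{-1}_{\bar\Lambda}P_{\Lambda}$ fits in the sequence $0\to P_{\Lambda}\to I_{\bar\Lambda}\to\Omega^{-1}_{\bar\Lambda}P_{\Lambda}\to 0$, where Lemma~\ref{lem:proj}(b) makes $I_{\bar\Lambda}$ projective-injective, so $\Omega^{-1}_{\bar\Lambda}P_{\Lambda}\in\Fac\,Q_{\bar\Lambda}$ contributes nothing either. The essential case is $F(M)$: I expect the short exact sequence
\[0\to\ker\,\pi_M\to F(M)\to M\to 0\]
from Lemma~\ref{lem:33} to be precisely the canonical sequence. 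On one hand, $\ker\,\pi_M\cong\Omega^{-1}_{\bar\Lambda}\Omega^2_{\Lambda}M\in\Fac\,Q_{\bar\Lambda}$ by the same injective-envelope argument applied to $\Omega^2_{\Lambda}M\in\mod\,\Lambda$. On the other hand, $M\in Q_{\bar\Lambda}^\perp$ because any nonzero map $Q_{\bar\Lambda}\to M$ would produce a nonzero submodule of $M$ whose top contains a dotted simple $S(i^\bullet)$, contradicting $M\in\mod\,\Lambda$.

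Combining these computations, the torsion-free part of $\bar F(M,P_{\Lambda})$ is $M$. Since a support $\tau$-tilting pair in $\stilt\,\Lambda$ is determined by its module component, Proposition~\ref{prop:FM} then forces $\bar F(M,P_{\Lambda})=\bar F_{\Lambda}(M,P_{\Lambda})$, completing the argument. The most delicate point will be recognizing the Lemma~\ref{lem:33} sequence as the canonical sequence of $F(M)$ with respect to the torsion pair $(\Fac\,Q_{\bar\Lambda},Q_{\bar\Lambda}^\perp)$; once that is in hand, the rest is bookkeeping powered by the structural description of projective-injective $\bar\Lambda$-modules in Lemma~\ref{lem:proj}.
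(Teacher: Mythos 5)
Your proposal is correct and follows essentially the same route as the paper: both hinge on identifying $G(\Lambda)$ with $Q_{\bar\Lambda}$, observing that $\Omega^{-1}_{\bar\Lambda}P_{\Lambda}\oplus Q_{\bar\Lambda}\in\Fac\,Q_{\bar\Lambda}$, and recognizing the sequence $0\to\ker\,\pi_M\to F(M)\to M\to 0$ from Lemma~\ref{lem:33} as the canonical sequence for the torsion pair $(\Fac\,Q_{\bar\Lambda},\mod\,\Lambda)$, so that the Jasso reduction of $\bar F(M,P_{\Lambda})$ recovers $M$. The only cosmetic difference is that you conclude via injectivity of the reduction map, whereas the paper phrases it as the inverses $\bar F^{-1}$ and $\bar F_{\Lambda}^{-1}$ agreeing on $\tilt\,\bar\Lambda$; these are the same argument.
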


\begin{proof}
By Proposition~\ref{prop:FM} the map $\bar F_{\Lambda}^{-1}: \stilt_{Q_{\bar\Lambda}}\bar\Lambda \to \stilt\, \Lambda$ is a poset isomorphism.  Moreover, a $\tau$-tilting $\bar\Lambda$-module $N$ is mapped to $\bar F_{\Lambda}^{-1}(N)= fN$ where $fN$ is the torsion free part of $N$ with respect to the torsion pair $(\Fac \,Q_{\bar\Lambda},   (Q_{\bar\Lambda})^{\perp}) =(\Fac \,Q_{\bar\Lambda},   \mod\,\Lambda)$ in $\mod\,\bar\Lambda$.  

Then the canonical sequence for $N$ with respect to this torsion pair is
\[0\to tN\to N \to fN \to 0\]
where $tN\in \Fac \,Q_{\bar\Lambda}, fN\in \mod\,\Lambda$.  
Note that $fN\in\mod\,\Lambda$. 

On the other hand the map $\bar F: \stilt \,\Lambda \to \tilt\,\bar\Lambda$ is a bijection by Theorem~\ref{thm:f}.  Moreover, $\tilt\,\bar\Lambda \subseteq \stilt_{Q_{\bar\Lambda}}\bar\Lambda$. 

Next we show that $\bar F ^{-1}$ also corresponds to taking the torsion free part with respect to the torsion pair $(\Fac \,Q_{\bar\Lambda},   \mod\,\Lambda)$.  Let $(M, P_{\Lambda})\in\stilt\,\Lambda$.  Then by definition we have 
 $\bar F (M, P_{\Lambda}) = F(M)\oplus \Omega^{-1}_{\bar\Lambda}P_{\Lambda}\oplus Q_{\bar\Lambda}$.  Note that $\Omega^{-1}_{\bar\Lambda}P_{\Lambda}\oplus Q_{\bar\Lambda} \in \Fac\,Q_{\bar\Lambda}$.  Now, the canonical sequence for $F(M)$ is precisely the last column of diagram \eqref{eq:33}, since $M\in \mod\,\Lambda$ and $\ker\,\pi_M$ is a quotient of a projective-injective $\bar\Lambda$-module.  This shows that $f(\bar F(M))\cong M$.   Therefore, the two maps $\bar F^{-1}, \bar F^{-1}_{\Lambda}: \tilt\,\bar\Lambda \to \stilt\,\Lambda$ agree.  
 
 Since $\bar F: \stilt\,\Lambda\to \tilt\,\bar\Lambda\subseteq \stilt_{Q_{\bar\Lambda}}\bar \Lambda$ is a bijection and $\bar F_{\Lambda}: \stilt\,\Lambda\to \stilt_{Q_{\bar\Lambda}}\bar \Lambda$ is a bijection such that their inverses agree on  $\tilt\,\bar\Lambda$, then we conclude that $\bar F$ and $\bar F_{\Lambda}$ are actually equal.  In particular, since $\bar F_{\Lambda}$ is a poset isomorphism then so is $\bar F$. 
 \end{proof}

In particular, the next statement  follows immediately from Theorem~\ref{prop:fg}.
\begin{corollary}\label{cor:all}
There is an isomorphism of posets 
\[\stilt\,\Lambda \cong  \stilt_{\projinj\,\bar\Lambda}\,\bar\Lambda \cong \tilt\,\bar\Lambda.\]
\end{corollary}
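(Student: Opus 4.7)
The plan is to read off Corollary~\ref{cor:all} as a direct consequence of Theorem~\ref{prop:fg} and a suitable specialization of Proposition~\ref{prop:FM}, so no further categorical work is required; the task is entirely one of matching definitions and combining existing statements.

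First, I would invoke Theorem~\ref{prop:fg} to obtain the poset isomorphism $\bar F : \stilt\,\Lambda \to \tilt\,\bar\Lambda$, which gives the right-hand isomorphism in the corollary immediately.

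Next, to identify $\tilt\,\bar\Lambda$ with $\stilt_{\projinj\,\bar\Lambda}\,\bar\Lambda$, I would apply Proposition~\ref{prop:FM} to the largest support $\tau$-tilting pair $(M, P_{\Lambda}) = (\Lambda, 0)$. Unwinding Definition~\ref{def:G}, we have $G(\Lambda) \cong \bigoplus_i P_{\bar\Lambda}(i^\bullet)$, and by Lemma~\ref{lem:proj}(b) each $P_{\bar\Lambda}(i^\bullet)$ is projective-injective, so $G(\Lambda) \cong Q_{\bar\Lambda}$ is exactly the sum of all indecomposable projective-injective $\bar\Lambda$-modules. Consequently $\stilt_{G(\Lambda,0)}\,\bar\Lambda = \stilt_{(Q_{\bar\Lambda},0)}\,\bar\Lambda = \stilt_{\projinj\,\bar\Lambda}\,\bar\Lambda$, and Proposition~\ref{prop:FM} delivers a poset isomorphism $\bar F_{\Lambda} : \stilt\,\Lambda \to \stilt_{\projinj\,\bar\Lambda}\,\bar\Lambda$.

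Finally, I would use the first conclusion of Theorem~\ref{prop:fg}, which asserts $\bar F = \bar F_{\Lambda}$ as maps into $\stilt\,\bar\Lambda$. Since both are bijections onto their respective images, this forces $\tilt\,\bar\Lambda = \stilt_{\projinj\,\bar\Lambda}\,\bar\Lambda$ as subposets of $\stilt\,\bar\Lambda$, completing the chain $\stilt\,\Lambda \cong \stilt_{\projinj\,\bar\Lambda}\,\bar\Lambda \cong \tilt\,\bar\Lambda$. There is no real obstacle beyond bookkeeping: all substantive content lies in Theorem~\ref{prop:fg} and Proposition~\ref{prop:FM}, and the only subtle point is the recognition that $G(\Lambda)$ is precisely the sum of projective-injectives of $\bar\Lambda$, which is immediate from Lemma~\ref{lem:proj}(b).
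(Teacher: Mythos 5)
Your proposal is correct and follows essentially the same route as the paper: the paper also obtains $\stilt\,\Lambda \cong \stilt_{\projinj\,\bar\Lambda}\,\bar\Lambda$ from $\bar F_{\Lambda}$ via Proposition~\ref{prop:FM} (with $(M,P_\Lambda)=(\Lambda,0)$ and $G(\Lambda)\cong Q_{\bar\Lambda}$) and $\stilt\,\Lambda\cong\tilt\,\bar\Lambda$ from $\bar F$ via Theorem~\ref{prop:fg}. Your additional observation that $\bar F=\bar F_{\Lambda}$ forces $\tilt\,\bar\Lambda=\stilt_{\projinj\,\bar\Lambda}\,\bar\Lambda$ as subposets is a harmless strengthening, not a deviation.
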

\begin{proof}
The isomorphism between the first two terms is given by $\bar F_{\Lambda}$ from Proposition~\ref{prop:FM}, and the isomorphism between the first and the last term is given by $\bar F$ from Theorem~\ref{prop:fg}.
\end{proof}

\begin{example}\label{ex:5}
	Let $\Lambda$ be as in Example~\ref{ex:2}.  Recall that the support $\tau$-tilting poset of $\Lambda$ is shown in Example~\ref{ex:4} and has the form of a pentagon.  The support $\tau$-tilting poset of $\bar\Lambda$ is shown in Figure~\ref{fig:big}.  The five highlighted pentagons denote the image of $\varphi$ inside $\stilt\,\bar\Lambda$, where each pentagon is a Bongartz interval of $G(M, P_{\Lambda})$ for a support $\tau$-tilting pair  $(M, P_{\Lambda})$ in $\mod\,\Lambda$.  Moreover, the maximal element in each highlighted region corresponds to the image of $\bar G$, which is also computed in Example~\ref{ex:4}.
Lastly, the top-most highlighted pentagon of $\stilt\,\bar\Lambda$ is $\tilt\,\bar\Lambda$ corresponding to the image of $\bar F$, or equivalently, the image of $\bar F_{\Lambda}$.   
	\end{example}
	
	\begin{figure}
\newcommand{\ob}{\smash{1^\bullet}}
\newcommand{\tb}{\smash{2^\bullet}}
\newcommand{\miniscule}{\fontsize{4}{5} \selectfont}
\begin{tikzpicture}[xscale=.7,yscale=.69]
	\newcommand{\ho}{0}
	\newcommand{\vo}{0}
	\draw [fill=red!30, color=red!30, rounded corners] (-2+\ho,-0.5+\vo) -- (2+\ho,-0.5+\vo) -- (4+\ho,1.5+\vo) -- (4+\ho,4.5+\vo) -- (2+\ho,6.5+\vo) -- (-2+\ho,6.5+\vo) -- (-4+\ho,3+\vo) -- cycle;

	\renewcommand{\ho}{7}
	\renewcommand{\vo}{8}
	\draw [fill=red!30, color=red!30, rounded corners] (-2+\ho,-0.5+\vo) -- (2+\ho,-0.5+\vo) -- (4+\ho,1.5+\vo) -- (4+\ho,4.5+\vo) -- (2+\ho,6.5+\vo) -- (-2+\ho,6.5+\vo) -- (-4+\ho,3+\vo) -- cycle;

	\renewcommand{\ho}{7}
	\renewcommand{\vo}{16}
	\draw [fill=red!30, color=red!30, rounded corners] (-2+\ho,-0.5+\vo) -- (2+\ho,-0.5+\vo) -- (4+\ho,1.5+\vo) -- (4+\ho,4.5+\vo) -- (2+\ho,6.5+\vo) -- (-2+\ho,6.5+\vo) -- (-4+\ho,3+\vo) -- cycle;

	\renewcommand{\ho}{0}
	\renewcommand{\vo}{24}
	\draw [fill=red!30, color=red!30, rounded corners] (-2+\ho,-0.5+\vo) -- (2+\ho,-0.5+\vo) -- (4+\ho,1.5+\vo) -- (4+\ho,4.5+\vo) -- (2+\ho,6.5+\vo) -- (-2+\ho,6.5+\vo) -- (-4+\ho,3+\vo) -- cycle;

	\renewcommand{\ho}{-5}
	\renewcommand{\vo}{17}
	\draw [fill=red!30, color=red!30, rounded corners] (-2+\ho,-0.5+\vo) -- (2+\ho,-0.5+\vo) -- (4+\ho,1.5+\vo) -- (4+\ho,4.5+\vo) -- (2+\ho,6.5+\vo) -- (-2+\ho,6.5+\vo) -- (-4+\ho,3+\vo) -- cycle;

	\node	(A)
		at (0,30)
		{\miniscule $\left(\begin{matrix}\ob\\\tb\\1\;\;\end{matrix}\oplus\begin{matrix}\tb\\1\;\;\\2\;\;\end{matrix}\oplus\begin{matrix}1\\2\end{matrix}\oplus2,0\right)$};
	\node	(B)
		at (-2,27)
		{\miniscule $\left(\begin{matrix}\ob\\\tb\\1\;\;\end{matrix}\oplus\begin{matrix}\tb\\1\;\;\\2\;\;\end{matrix}\oplus\tb\oplus2,0\right)$};
	\node	(C)
		at (2,28)
		{\miniscule $\left(\begin{matrix}\ob\\\tb\\1\;\;\end{matrix}\oplus\begin{matrix}\tb\\1\;\;\\2\;\;\end{matrix}\oplus\begin{matrix}1\\2\end{matrix}\oplus1,0\right)$};
	\node	(D)
		at (2,26)
		{\miniscule $\left(\begin{matrix}\ob\\\tb\\1\;\;\end{matrix}\oplus\begin{matrix}\tb\\1\;\;\\2\;\;\end{matrix}\oplus\begin{matrix}\tb\\1\;\;\end{matrix}\oplus1,0\right)$};
	\node	(E)
		at (0,24)
		{\miniscule $\left(\begin{matrix}\ob\\\tb\\1\;\;\end{matrix}\oplus\begin{matrix}\tb\\1\;\;\\2\;\;\end{matrix}\oplus\begin{matrix}\tb\\1\;\;\end{matrix}\oplus\tb,0\right)$};

	\node	(H)
		at (-5,23)
		{\miniscule $\left(\begin{matrix}\tb\\1\;\;\\2\;\;\end{matrix}\oplus\begin{matrix}1\\2\end{matrix}\oplus2,P(\ob)\right)$};
	\node	(K)
		at (-7,20)
		{\miniscule $\left(\begin{matrix}\tb\\1\;\;\\2\;\;\end{matrix}\oplus\tb\oplus2,P(\ob)\right)$};
	\node	(M)
		at (-3,21)
		{\miniscule $\left(\begin{matrix}\tb\\1\;\;\\2\;\;\end{matrix}\oplus\begin{matrix}1\\2\end{matrix}\oplus1,P(\ob)\right)$};
	\node	(Q)
		at (-3,19)
		{\miniscule $\left(\begin{matrix}\tb\\1\;\;\\2\;\;\end{matrix}\oplus\begin{matrix}\tb\\1\;\;\end{matrix}\oplus1,P(\ob)\right)$};
	\node	(R)
		at (-5,17)
		{\miniscule $\left(\begin{matrix}\tb\\1\;\;\\2\;\;\end{matrix}\oplus\begin{matrix}\tb\\1\;\;\end{matrix}\oplus\tb,P(\ob)\right)$};

	\node	(L)
		at (7,22)
		{\miniscule $\left(\begin{matrix}\ob\\\tb\\1\;\;\end{matrix}\oplus\ob\oplus\begin{matrix}1\\2\end{matrix}\oplus2,0\right)$};
	\node	(P)
		at (9,20)
		{\miniscule $\left(\begin{matrix}\ob\\\tb\\1\;\;\end{matrix}\oplus\ob\oplus\begin{matrix}1\\2\end{matrix}\oplus1,0\right)$};
	\node	(O)
		at (5,19)
		{\miniscule $\left(\begin{matrix}\ob\\\tb\\1\;\;\end{matrix}\oplus\ob\oplus\begin{matrix}\ob\\\tb\end{matrix}\oplus2,0\right)$};
	\node	(U)
		at (7,16)
		{\miniscule $\left(\begin{matrix}\ob\\\tb\\1\;\;\end{matrix}\oplus\ob\oplus\begin{matrix}\ob\\\tb\end{matrix},P(2)\right)$};
	\node	(V)
		at (9,18)
		{\miniscule $\left(\begin{matrix}\ob\\\tb\\1\;\;\end{matrix}\oplus\ob\oplus1,P(2)\right)$};

	\node	(X)
		at (7,14)
		{\miniscule $\left(\ob\oplus\begin{matrix}1\\2\end{matrix}\oplus2,P(\tb)\right)$};
	\node	(AC)
		at (5,11)
		{\miniscule $\left(\ob\oplus2,P(\tb)\oplus P(1)\right)$};
	\node	(AE)
		at (7,8)
		{\miniscule $\left(\ob,P(\tb)\oplus P(1)\oplus P(2)\right)$};
	\node	(AD)
		at (9,10)
		{\miniscule $\left(\ob\oplus1,P(\tb)\oplus P(2)\right)$};
	\node	(Z)
		at (9,12)
		{\miniscule $\left(\ob\oplus\begin{matrix}1\\2\end{matrix}\oplus 1,P(\tb)\right)$};

	\node	(AG)
		at (0,6)
		{\miniscule $\left(\begin{matrix}1\\2\end{matrix}\oplus2,P(\ob)\oplus P(\tb)\right)$};
	\node	(AH)
		at (-2,3)
		{\miniscule $\left(2,P(\ob)\oplus P(\tb)\oplus P(1)\right)$};
	\node	(AK)
		at (0,0)
		{\miniscule $\left(0,P(\ob)\oplus P(\tb)\oplus P(1)\oplus P(2)\right)$};
	\node	(AJ)
		at (2,2)
		{\miniscule $\left(1,P(\ob)\oplus P(\tb)\oplus P(2)\right)$};
	\node	(AI)
		at (2,4)
		{\miniscule $\left(\begin{matrix}1\\2\end{matrix},P(\ob)\oplus P(\tb)\oplus P(1)\right)$};

	\node[color=gray]	(AF)
		at (-7,6)
		{\miniscule $\left(\tb,P(\ob)\oplus P(1)\oplus P(2)\right)$};
	\node[color=gray]	(AA)
		at (-5.4,14)
		{\miniscule $\left(\tb\oplus 2,P(\ob)\oplus P(1)\right)$};
	\node[color=gray]	(AB)
		at (-3,8)
		{\miniscule $\left(\tb\oplus\begin{matrix}\tb\\1\;\;\end{matrix},P(\ob)\oplus P(2)\right)$};
	\node[color=gray]	(W)
		at (-4,11)
		{\miniscule $\left(1\oplus\begin{matrix}\tb\\1\;\;\end{matrix},P(\ob)\oplus P(2)\right)$};
	\node[color=gray]	(Y)
		at (2,9)
		{\miniscule $\left(\begin{matrix}\ob\\\tb\end{matrix}\oplus\ob,P(1)\oplus P(2)\right)$};
	\node[color=gray]	(T)
		at (2.5,14)
		{\miniscule $\left(\begin{matrix}\ob\\\tb\end{matrix}\oplus\tb,P(1)\oplus P(2)\right)$};
	\node[color=gray]	(S)
		at (-1.2,12.5)
		{\miniscule $\left(\begin{matrix}\ob\\\tb\end{matrix}\oplus\ob\oplus 2,P(1)\right)$};
	\node[color=gray]	(N)
		at (2,17)
		{\miniscule $\left(\begin{matrix}\ob\\\tb\\1\;\;\end{matrix}\oplus\begin{matrix}\ob\\\tb\end{matrix}\oplus\tb,P(2)\right)$};
	\node[color=gray]	(J)
		at (-1.4,16)
		{\miniscule $\left(\begin{matrix}\ob\\\tb\end{matrix}\oplus\tb\oplus 2,P(1)\right)$};
	\node[color=gray]	(I)
		at (2,20)
		{\miniscule $\left(\begin{matrix}\ob\\\tb\\1\;\;\end{matrix}\oplus\begin{matrix}\tb\\1\;\;\end{matrix}\oplus\tb,P(2)\right)$};
	\node[color=gray]	(G)
		at (8,24)
		{\miniscule $\left(\begin{matrix}\ob\\\tb\\1\;\;\end{matrix}\oplus\begin{matrix}\tb\\1\;\;\end{matrix}\oplus1,P(2)\right)$};
	\node[color=gray]	(F)
		at (4,24)
		{\miniscule $\left(\begin{matrix}\ob\\\tb\\1\;\;\end{matrix}\oplus\begin{matrix}\ob\\\tb\end{matrix}\oplus\tb\oplus 2,0\right)$};

	\draw[color=gray]
		(AK) to [bend left] (AF)
		(AJ) to [out=146, in=225] (W)
		(AH) to [bend left] (AA)
		(AE) to (Y)
		(AC) to (S)
		(Y) to [out=70, in=220] (U) 
		(U) to (N)
		(V) to [out=30, in=-40] (G)
		(O) to (F)
		(S) to [out=65, in=-90] (O)
		(AA) to [bend left] (K)
		(AB) to [out=54, in=-69] (R)
		(W) to [out=83, in=-92] (Q)
		(I) to (E)
		(F) to [out=160, in=-30] (B)
		(G) to (D)
		;

	\draw[color=gray]
		(AF) to (AA)
		(AF) to (AB)
		(AF) to [out=55, in=-135](T)
		(AA) to (J)
		(AB) to (W)
		(AB) to [out=20, in=-130] (I)
		(Y) to (S)
		(Y) to (T)
		(W) to [out=86, in=197] (G) 
		(T) to (J)
		(T) to (N)
		(S) to (J)
		(N) to (I)
		(N) to [out=50.5, in=-90] (F)
		(J) to [out=80, in=-135] (F)
		(I) to (G)
		;

	\draw[color=black]
		(A) to [bend right] (H)
		(B) to [bend right=32] (K)
		(C) to [bend right=20] (M)
		(D) to [bend left=35] (Q)
		(E) to [bend left=40] (R)
		(A) to [out=-10, in=110] (L)
		(C) to [out=-7, in=43] (P)
		(L) to [out=-70, in=157] (X)
		(P) to [out=-40, in=50] (Z)
		(V) to [out=-50, in=35] (AD)
		(X) to [out=215, in=106] (AG)
		(H) to [out=-19, in=91] (AG)
		(M) to [out=207, in=175] (AI)
		(Z) to [out=210, in=80] (AI)
		(AD) to [out=-75, in=30] (AJ)
		(AK) to [out=15, in=-90] (AE)
		(AC) to [out=190, in=96] (AH)
		;

	\draw[color=red, thick] (A) -- (B) -- (E) -- (D) -- (C) -- (A);
	\draw[color=red, thick] (H) -- (K) -- (R) -- (Q) -- (M) -- (H);
	\draw[color=red, thick] (L) -- (O) -- (U) -- (V) -- (P) -- (L);
	\draw[color=red, thick] (X) -- (AC) -- (AE) -- (AD) -- (Z) -- (X);
	\draw[color=red, thick] (AG) -- (AH) -- (AK) -- (AJ) -- (AI) -- (AG);
\end{tikzpicture}
\caption{The support $\tau$-tilting poset for $\bar\Lambda$ where $\Lambda = k(1\to 2)$.}
\label{fig:big}
\end{figure}
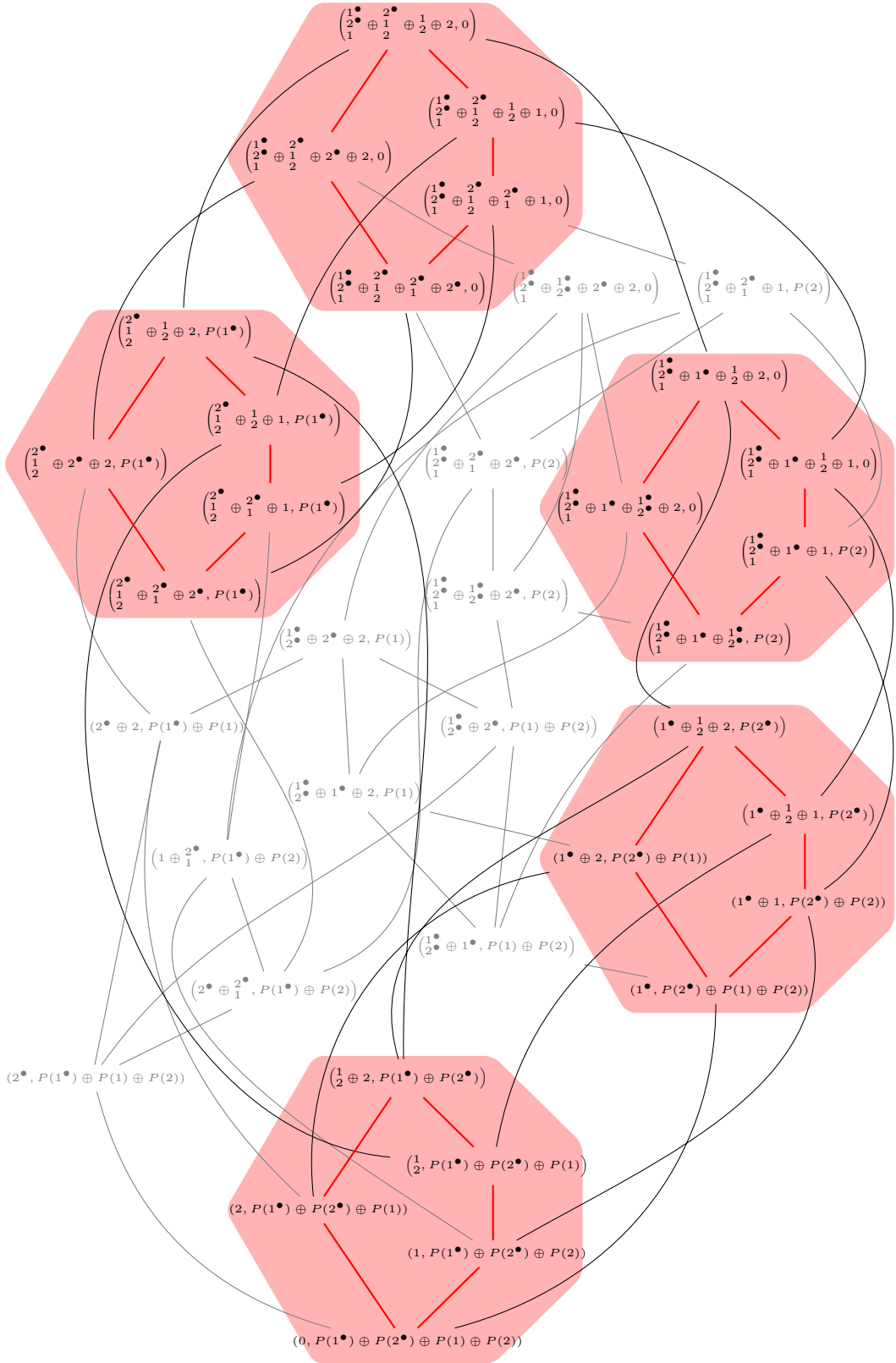

Next, we provide a relation between maximal green sequences in $\mod\,\Lambda$ and $\mod\,\bar\Lambda$.  Recall that a \emph{maximal green sequence} $\mathcal{M}=(\mathcal{M}_1, \dots, \mathcal{M}_t)$ is a finite maximal chain in $\stilt\,\Lambda$.  In particular, $\mathcal{M}$ starts at $\mathcal{M}_1=(\Lambda,0)$, ends at $\mathcal{M}_t=(0, \Lambda)$, and for any two consecutive elements of $\mathcal{M}$ we have $\mathcal{M}_i > \mathcal{M}_{i+1}$ is a covering relation in $\stilt\,\Lambda$.  Maximal green sequences were first introduced by Keller in \cite{K} in the context of cluster algebras and later generalized to module categories of finite dimensional algebras, for example see the survey \cite{KD} and references therein.  One of the main questions in this topic is whether an algebra admits a maximal green sequence. 

Let $\MGS(\Lambda)$ denote the set of maximal green sequences for $\Lambda$.  Given a sequence $\mathcal{M}$ of elements in $\stilt\,\Lambda$, which is not necessarily a maximal green sequence, and a map $g:\stilt\,\Lambda\to \stilt\,\bar\Lambda$, we define $g(\mathcal{M}) = (g(\mathcal{M}_1), \dots, g(\mathcal{M}_t))$ to be the resulting sequence.  Similarly, given two sequences  $\mathcal{M}=(\mathcal{M}_i)_{i=1}^t, \mathcal{M'}=(\mathcal{M}'_i)_{i=1}^{t'}$ with $\mathcal{M}_1=\mathcal{M}'_{t'}$ we define $\mathcal{M}\circ  \mathcal{M'}$, to be the composition $(\mathcal{M}'_1, \dots, \mathcal{M}'_{t'}= \mathcal{M}_1, \mathcal{M}_2, \dots, \mathcal{M}_{t})$.

With this notation we have the following result, which in particular says that the image of the map $\varphi$ from Corollary~\ref{cor:cross} spans the entire length of the support $\tau$-tilting poset of $\bar\Lambda$.

\begin{thm}
There is an inclusion of maximal green sequences 
\begin{align*}
\psi: \,\,\,  & \MGS(\Lambda) \times \MGS(\Lambda)  \to \MGS(\bar\Lambda)\\
 & (\mathcal{M}, \mathcal{M}')  \mapsto  \bar F_{0}(\mathcal{M'})\circ\bar G (\mathcal{M}).
\end{align*}
\end{thm}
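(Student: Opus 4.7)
The plan is to verify that $\psi(\mathcal{M}, \mathcal{M}')$ lies in $\MGS(\bar\Lambda)$ and that $\psi$ is injective, proceeding in three steps: match the endpoints of the two subchains, promote their cover relations to $\stilt\,\bar\Lambda$, and reconstruct $(\mathcal{M}, \mathcal{M}')$ from $\psi(\mathcal{M}, \mathcal{M}')$.

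First I would compute the endpoints. By Theorem~\ref{thm:g}, $\bar G(\Lambda, 0) = (\Lambda \oplus Q_{\bar\Lambda}, 0) = (\bar\Lambda, 0)$ and $\bar G(0, \Lambda) = (\Lambda, Q_{\bar\Lambda})$, so $\bar G(\mathcal{M})$ descends from the top $(\bar\Lambda, 0)$ of $\stilt\,\bar\Lambda$ down to the pivot $(\Lambda, Q_{\bar\Lambda})$. Applying Proposition~\ref{prop:FM} to the pair $(M, P_\Lambda) = (0, \Lambda)$ yields a poset isomorphism $\bar F_0 : \stilt\,\Lambda \to \stilt_{(0, Q_{\bar\Lambda})}\bar\Lambda$; its maximum is the Bongartz completion of $(0, Q_{\bar\Lambda})$, which by Theorem~\ref{thm:g} equals $\bar G(0, \Lambda) = (\Lambda, Q_{\bar\Lambda})$, while its minimum is the bottom $(0, \bar\Lambda)$ of $\stilt\,\bar\Lambda$. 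Hence $\bar F_0(\mathcal{M}')$ descends from $(\Lambda, Q_{\bar\Lambda})$ to $(0, \bar\Lambda)$, and the two subchains concatenate to a chain from the top to the bottom of $\stilt\,\bar\Lambda$ passing through the pivot.

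Next I would show that each consecutive pair in $\psi(\mathcal{M}, \mathcal{M}')$ is a cover relation in $\stilt\,\bar\Lambda$. Since $\bar G$ and $\bar F_0$ both arise from Jasso's $\tau$-tilting reduction (Theorem~\ref{thm:red}), they preserve the cover relations of $\mathcal{M}$ and $\mathcal{M}'$ to covers inside the respective Bongartz intervals. The key additional point is that a cover relation within a Bongartz interval of $(N, P)$ in $\stilt\,\bar\Lambda$ is automatically a cover of the ambient poset: a mutation between such elements is a single-summand exchange disjoint from $(N, P)$, so any would-be refinement $X > Z > Y$ in $\stilt\,\bar\Lambda$ would still have $(N, P)$ as a $\tau$-rigid summand of $Z$, forcing $Z$ back into the interval and contradicting the cover relation there. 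Consequently $\bar G(\mathcal{M})$ and $\bar F_0(\mathcal{M}')$ both consist of $\stilt\,\bar\Lambda$-covers, and their concatenation is a maximal chain, which is precisely an element of $\MGS(\bar\Lambda)$.

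Finally, for injectivity I would exploit that the pivot $(\Lambda, Q_{\bar\Lambda})$ is intrinsically identifiable in $\psi(\mathcal{M}, \mathcal{M}')$ as the unique element lying in both Bongartz intervals $\bar G(\stilt\,\Lambda) = \stilt_\Lambda\bar\Lambda$ and $\bar F_0(\stilt\,\Lambda) = \stilt_{(0, Q_{\bar\Lambda})}\bar\Lambda$; any other common element would contain both $\Lambda$ as a module summand and $Q_{\bar\Lambda}$ as a projective summand and would therefore exhaust $\bar\Lambda$, which happens only at the pivot. Cutting $\psi(\mathcal{M}, \mathcal{M}')$ at this pivot and applying $\bar G^{-1}$ and $\bar F_0^{-1}$ to the two halves recovers $(\mathcal{M}, \mathcal{M}')$, so $\psi$ is injective. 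The main obstacle will be the cover-preservation claim for Bongartz intervals; once this structural fact about single-summand $\tau$-tilting mutation is established, the rest is a direct assembly of the results of Sections~4 and~5.
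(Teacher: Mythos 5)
Your proposal is correct and takes essentially the same route as the paper: the same endpoint computations $\bar G(\Lambda,0)=(\bar\Lambda,0)$, $\bar G(0,\Lambda)=\bar F_0(\Lambda,0)=(\Lambda,Q_{\bar\Lambda})$, $\bar F_0(0,\Lambda)=(0,\bar\Lambda)$, the same identification of the two image chains as maximal chains in the Bongartz intervals $[(\bar\Lambda,0),(\Lambda,Q_{\bar\Lambda})]$ and $[(\Lambda,Q_{\bar\Lambda}),(0,\bar\Lambda)]$ via Theorem~\ref{cor:g} and Proposition~\ref{prop:FM}, followed by concatenation and injectivity from splitting at the pivot. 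The one point you single out as the main obstacle — that covers inside a Bongartz interval are covers of $\stilt\,\bar\Lambda$ — is immediate from the fact, recorded as known in Section~2, that $\stilt_{(N,P)}\,\bar\Lambda$ is an honest interval of the ambient poset (so any $Z$ with $X>Z>Y$ for $X,Y$ in the interval lies in it); this, rather than your ``single-summand exchange'' phrasing, is the clean justification, and the paper simply leaves it implicit.
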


\begin{proof}
Let $\mathcal{M}, \mathcal{M}'\in \MGS(\Lambda)$. Then by definition of $\bar G$ we have that 
\[\bar G (\mathcal{M}) = (\bar G(\Lambda, 0), \dots, \bar G (0, \Lambda))= ((\bar\Lambda, 0), \dots (\Lambda, Q_{\bar\Lambda}))\]
where $Q_{\bar\Lambda}$ denotes the projective-injective $\bar\Lambda$-modules.  Furthermore, by Theorem~\ref{cor:g} the sequence $\bar G (\mathcal{M})$ is a maximal chain in the interval 
\[ [(\bar\Lambda, 0), (\Lambda, Q_{\bar\Lambda})]\subset \stilt\,\bar\Lambda.\]

Similarly, 
\[\bar F_0(\mathcal{M}')= (\bar F_0(\Lambda, 0), \dots, \bar F_0 (0, \Lambda))\subset \stilt_{G(0,\Lambda)}\,\bar\Lambda\]
where by definition $\bar F_0(\Lambda, 0)$ is the Bongartz completion of $G(0, \Lambda)=(0,Q_{\bar\Lambda})$, which equals $(\Lambda, Q_{\bar\Lambda})$.  Moreover, $\bar F_0 (0, \Lambda)$ is the Bongartz co-completion of $G(0, \Lambda)=(0, Q_{\bar\Lambda})$, which equals $(0, \bar\Lambda)$.  Also note that by definition $\bar F_0$ is a poset isomorphism between $\stilt\,\Lambda$ and the Bongartz interval of $G(0, \Lambda)$, thus $\bar F_0(\mathcal{M}')$ is a maximal chain in the interval 
\[ [(\Lambda, Q_{\bar\Lambda}), (0, \bar\Lambda)]\subset \stilt\,\bar\Lambda.\]
This means that the composition $\bar F_{0}(\mathcal{M'})\circ\bar G (\mathcal{M})$ is a maximal green sequence for $\bar\Lambda$.  It is easy to see that $\psi$ is injective because both $\bar F_0$ and $\bar G$ are injective. 
\end{proof}

\begin{example}
Let $\Lambda=k(1\to 2)$ with support $\tau$-tilting poset shown in Example~\ref{ex:4}.  Consider two maximal green sequence for $\Lambda$ shown below. 
\[\mathcal{M} = \left(\big({\begin{smallmatrix}1\\2\end{smallmatrix}}\oplus {\begin{smallmatrix}2\end{smallmatrix}},{\begin{smallmatrix}0\end{smallmatrix}}\big),
\big({\begin{smallmatrix}1\\2\end{smallmatrix}}\oplus {\begin{smallmatrix}1\end{smallmatrix}},{\begin{smallmatrix}0\end{smallmatrix}}\big),
\big({\begin{smallmatrix}1\end{smallmatrix}},{\begin{smallmatrix}P(2)\end{smallmatrix}}\big),
\big( {\begin{smallmatrix}0\end{smallmatrix}},{\begin{smallmatrix}P(1)\oplus P(2)\end{smallmatrix}}\big)
\right)\]

\[\mathcal{M'} = \left(\big({\begin{smallmatrix}1\\2\end{smallmatrix}}\oplus {\begin{smallmatrix}2\end{smallmatrix}},{\begin{smallmatrix}0\end{smallmatrix}}\big),
\big({\begin{smallmatrix}2\end{smallmatrix}},{\begin{smallmatrix}P(1)\end{smallmatrix}}\big),
\big( {\begin{smallmatrix}0\end{smallmatrix}},{\begin{smallmatrix}P(1)\oplus P(2)\end{smallmatrix}}\big)
\right)\]

Then $\psi(\mathcal{M}, \mathcal{M'})$ is the following maximal green sequence for $\bar\Lambda$. 

\begin{align*}
\Big(\big({\begin{smallmatrix}1^\bullet\\2^\bullet\\1\;\;\end{smallmatrix}}\oplus {\begin{smallmatrix}2^\bullet\\1\;\;\\2\;\;\end{smallmatrix}}\oplus{\begin{smallmatrix}1\\2\end{smallmatrix}}\oplus {\begin{smallmatrix}2\end{smallmatrix}},{\begin{smallmatrix}0\end{smallmatrix}}\big),
\big({\begin{smallmatrix}1^\bullet\\2^\bullet\\1\;\;\end{smallmatrix}}\oplus {\begin{smallmatrix}1^\bullet\end{smallmatrix}}\oplus{\begin{smallmatrix}1\\2\end{smallmatrix}}\oplus {\begin{smallmatrix}2\end{smallmatrix}},{\begin{smallmatrix}0\end{smallmatrix}}\big),
\big({\begin{smallmatrix}1^\bullet\end{smallmatrix}}\oplus{\begin{smallmatrix}1\\2\end{smallmatrix}}\oplus {\begin{smallmatrix}2\end{smallmatrix}},{\begin{smallmatrix}P(2^\bullet)\end{smallmatrix}}\big),
\big({\begin{smallmatrix}1\\2\end{smallmatrix}}\oplus {\begin{smallmatrix}2\end{smallmatrix}},{\begin{smallmatrix}P(1^\bullet)\end{smallmatrix}}\oplus {\begin{smallmatrix}P(2^\bullet)\end{smallmatrix}}\big),\\
\big({\begin{smallmatrix}2\end{smallmatrix}}, {\begin{smallmatrix}P(1^\bullet)\oplus P(2^\bullet)\oplus P(1)\end{smallmatrix}}\big), 
\big(0, {\begin{smallmatrix}P(1^\bullet)\oplus P(2^\bullet)\oplus P(1)\oplus P(2)\end{smallmatrix}}\big)
 \Big)
 \end{align*}
 
Note that in Figure~\ref{fig:big} this sequence $\psi(\mathcal{M}, \mathcal{M'})$ corresponds to a sequence starting with $\bar G(\mathcal{M})$, that goes between the top-most elements in each highlighted pentagon, followed by a maximal sequence $\bar F_0(\mathcal{M}')$ in the highlighted pentagon at the bottom of the diagram.   
\end{example}

\end{document}